\newcommand{\biggg}[1]{{\hbox{$\left#1\vbox to 20.5pt{}\right.\n@space$}}}
\newcommand{\Biggg}[1]{{\hbox{$\left#1\vbox to 23.5pt{}\right.\n@space$}}}
\newcommand{\bigggg}[1]{{\hbox{$\left#1\vbox to 26.5pt{}\right.\n@space$}}}
\newcommand{\Bigggg}[1]{{\hbox{$\left#1\vbox to 29.5pt{}\right.\n@space$}}}
\newcommand{\card}{\mbox{\rm{card}}}
\newcommand{\hausdorff}{\mbox{$\mathbin{\vdash\mspace{-8.0mu}\dashv}$}}
\newcommand{\grad}{\mbox{\rm{grad}}}
\newcommand{\ltree}[1]{\mbox{$\mathbf{#1}$}}
\newcommand{\isom}{\mbox{$\mathrel{\bar{\sim}}$}}
\newcommand{\EMn}[2]{\mbox{$#1^{({\rm EM}),#2}$}}
\newcommand{\ODp}[2]{\mbox{$#1^{({\rm ord}\,p),#2}$}}
\def\JelClassificationCodesName{{\bfseries JEL classification}\enspace}
\def\JelClassificationCodes#1{\par\addvspace
  \medskipamount{\rightskip=0pt plus1cm
\def\and{\ifhmode\unskip\nobreak\fi\ $\cdot$
}\noindent\JelClassificationCodesName\ignorespaces#1\par}}
\numberwithin{equation}{section}
\numberwithin{theorem}{section}
\newtheorem{myDef}[theorem]{Definition}
\newtheorem{myProp}[theorem]{Proposition}
\newtheorem{myLem}[theorem]{Lemma}
\newtheorem{myRem}[theorem]{Remark}
\newtheorem{myCor}[theorem]{Corollary}
\begin{document}
  
  \title{A new weak approximation
    scheme of stochastic differential equations and the Runge--Kutta
    method \thanks{This research was partly supported by the Ministry
      of Education, Science, Sports and Culture, Grant-in-Aid for Scientific
      Research (C), 15540110, 2003 and 18540113, 2006, 
      and by the 21st century COE program at
      Graduate School of Mathematics Sciences, the University of Tokyo. }}
  
  \titlerunning{New weak approximation scheme of SDEs}
  
  \author{Mariko Ninomiya  \and
    Syoiti Ninomiya
  }
  
  \authorrunning{M.~Ninomiya and S.~Ninomiya} 
  
  \institute{Mariko Ninomiya \at 
    Graduate School of Mathematical Sciences, The University of Tokyo, 
    3-8-1 Komaba, Meguro-ku, Tokyo 153-8914, Japan\\
    \email{mariko.nnmy@gmail.com}           
    \and
    Syoiti Ninomiya \at Center for Research in Advanced Financial
    Technology, Tokyo Institute of Technology,
    2-12-1 Ookayama, Meguro-ku,
    Tokyo 152-8552 Japan \\
    \email{ninomiya@craft.titech.ac.jp}
  }
  
  \date{Received:  / Accepted: }

  \maketitle
  
  \begin{abstract}
    The authors report on the construction of
    a new algorithm for the weak approximation of
    stochastic differential equations.  In this algorithm,
    an ODE-valued random variable
    whose average approximates
    the given stochastic differential equation is constructed by using
    the notion of free Lie algebra.  
    It is proved that the classical Runge--Kutta method for ODEs is directly 
    applicable to the drawn ODE from the random variable.
    In a numerical experiment, this is applied to 
    the problem of pricing Asian options under the Heston stochastic 
    volatility model.  
    Compared with some other methods, this algorithm
    gives significantly faster calculation times.
    \keywords{ {stochastic differential equations}\and
      {weak approximation}\and
      {free Lie algebra}\and {mathematical finance} \and
      {Runge--Kutta method}
    }
    \subclass{65C30 \and 65C05 \and 65L06 \and 17B01 \and 91B02}
    \JelClassificationCodes{C63 \and G12}
  \end{abstract}
  
  \section{Introduction}\label{intro}
  \subsection{The problem and background}
  \subsubsection{The problem}
  Let $(\Omega, \mathcal{F}, P)$ be a probability space, $B^0(t)=t$,
  and $(B^1(t), \ldots, B^d(t))$ be a 
  $d$-dimensional standard Brownian motion.
  $C^{\infty}_b({{\mathbb R}}^N; {{\mathbb R}}^N)$
  denotes the set of $\mathbb{R}^N$-valued infinitely differentiable
  functions defined in $\mathbb{R}^N$ whose derivatives are all
  bounded. Our interest is in weak approximation, that is to say,
  approximation of $(P_tf)(x)=E[f(X(t,x))]$ 
  where $f\in C^\infty_b({\mathbb R}^N;{\mathbb R})$ and $X(t,x)$
  is a solution to the
  stochastic differential equation
  written in the Stratonovich form:
  \begin{equation}\label{SDE}
    X(t,x)=x+\sum_{i=0}^{d}\int_{0}^{t}V_i(X(s,x))\circ dB^i(s),
  \end{equation}
  where $V_i\in C^\infty_b({\mathbb R}^N;{\mathbb R}^N)$ for $i=0,1,\dots, d$.
  Here, $V_i\in C_b^\infty\left({\mathbb R}^N; {\mathbb R}^N\right)$
  is considered to be a vector field
  in the following way:
  \begin{equation*}
    V_if(x)=\sum_{j=1}^{N}V^{j}_{i}(x)\frac{\partial f}{\partial x_j}(x),
    \quad \text{for $f \in C_b^{\infty}({\mathbb R}^N; {\mathbb R})$}.
  \end{equation*} 
  It is well-known (e.~g.~\cite{ikeda-watanabe}) 
  that $E[f\left(X(1,x)\right)]$ 
  is equal to $u(1,x)$ where $u$ is 
  the solution to the following
  partial differential equation for $L=V_0+(1/2)\sum_{i=1}^dV_i^2$:
  \begin{equation}\label{PDE}
    \frac{\partial u}{\partial t}(t,x)=Lu, \quad u(0,x)=f(x).
  \end{equation}
  \subsubsection{Background}
  A number of studies on numerical calculations of
  this problem have been conducted as there is a
  great demand for it in various fields.
  One often encounters this type of calculation particularly in
  mathematical finance.
  For example, the price of a financial derivative
  written on the diffusion $X(t,x)$ is obtained by the calculation
  of $E[f(X(T,x))]$.
  \par
  There are two approaches to the problem:
  PDE approach and simulation.
  The former one involves solving the partial differential
  equation~\eqref{PDE} numerically.
  This method works only when $L$ is elliptic and the
  dimension is relatively small.
  We do not go into details on the subject here
  but refer to~\cite{LapeyrePardouxSentis}.
  These conditions
  are not necessarily satisfied in many practical
  problems so we are forced to take the other approach
  which is called the probabilistic method or simulation.
  In this paper, we focus on this approach.
  \par
  Usually, the Euler--Maruyama scheme is
  used to discretize $X(t,x)$ during simulations
  to weakly approximate $X(t,x)$.
  It is shown in~\cite{KusuokaNinomiya:2004}, \cite{ninomiya:2001a},
  \cite{ninomiya:2003}, and~\cite{shimizu:2002} that the new
  higher-order scheme introduced by Kusuoka in~\cite{kusuoka:2001aprx} 
  calculates some finance problems much faster
  than the Euler--Maruyama scheme.
  Lyons and Victoir extensively developed the scheme
  in~\cite{LyonsVictoir:2002} using the notion of free Lie algebra.
  Recent developments can be found
  in~\cite{bayer-teichmann} and~\cite{filipovic-tappe-teichmann}.
  \par
  We will discuss the reason why higher order schemes greatly
  improve the speed of numerical weak approximation in the later
  part of this paper (Section~\ref{sec:5}).
  
  \subsubsection{Our results}
  In this paper, we describe how successfully we constructed 
  in Theorem~\ref{contp_krk_main_thm1} and 
  Corollary~\ref{contp_krk_main_cor} a new higher order weak approximation
  scheme for a broad class of stochastic differential equations.
  This scheme owes a great deal to 
  the scheme shown in~\cite{kusuoka:2001aprx} and 
  to the cubature method on Wiener space introduced 
  in~\cite{LyonsVictoir:2002}.
  \par
  An intuitive explanation of the scheme is as follows.
  We construct the ODE (ordinary differential equation)-valued
  random variable whose average approximates the given
  stochastic differential equation.
  From this random variable, an ODE itself is able to be drawn at one time.  
  \par
  This scheme has a remarkable advantage that once an ODE is drawn,
  the conventional Runge--Kutta method can be applied so as to approximate 
  the ODE.  The approximating random variable is  
  constructed using Theorem~\ref{contp_krk_main_thm1}
  and Theorem~\ref{contp_krk_main_thm2}
  and can be approximated 
  by the Runge--Kutta method for ODEs via Theorem~\ref{krk_main_thm3}.
  \par
  We should note that another higher-order weak approximation
  method is introduced
  in~\cite{NinomiyaVictoir:2005}.
  Although the algorithm in~\cite{NinomiyaVictoir:2005} and the new
  method presented in this paper
  are based on the same scheme (\cite{kusuoka:2001aprx}
  \cite{LyonsVictoir:2002})
  and
  have many common features, algorithms themselves differ
  significantly.
  
  \subsection{Notation}
  Let $A\:=\{v_0, v_1,\dots, v_d\}$ 
  be an alphabet where $d\in{\mathbb Z}_{\geq 1}$ 
  and $A^*$ denote the set of all words consisting of the elements of
  $A$.  The empty word $1$ is the identity of $A^*$.  For 
  $u=v_{i_1}\cdots v_{i_n}\in A^*$, $|u|$ 
  and $\|u\|$ are defined by $|u|=n$ and
  $\|u\|=|u|+\card\left(\left\{k\,|\,i_k=0\right\}\right)$ 
  where $\card(S)$ denotes the cardinality of a set $S$.  
  Here, $\|\cdot\|$ is related to the scaling property
  of the Brownian motion.  
  $A^*_m$ and $A^*_{\leq m}$ denote $\{w\in A^* |\ |w|=m\}$ 
  and $\{w\in A^* |\ |w|\leq m\}$, respectively.  
  Let ${\mathbb R}\langle A\rangle$ be the
  ${\mathbb R}$-coefficient free algebra
  with basis $A^*$ and ${\mathbb R}\langle\langle A\rangle\rangle$ be 
  the set of all ${\mathbb R}$-coefficient formal series with basis $A^*$.  
  Then, ${\mathbb R}\langle A\rangle$ is a sub-${\mathbb R}$-algebra of 
  ${\mathbb R}\langle\langle A\rangle\rangle$.  We call an element
  of ${\mathbb R}\langle A\rangle$ a non-commutative polynomial.  
  $P\in {\mathbb R}\langle\langle A\rangle\rangle$ is written as
  \begin{equation*}
    P=\sum_{w\in A^*}\left(P, w\right)w\quad\text{or}
    \quad\sum_{w\in A^*}a_w w,
  \end{equation*}
  where $(P, w)=a_w\in {\mathbb R}$ denotes the coefficient of $w$.  
  Let 
  \begin{equation*}
    {\mathbb R}\langle A\rangle_m = \left\{P\in {\mathbb R}\langle A\rangle 
    \left|\ (P, w) = 0,\ \text{if $\|w\|\neq m$}  \right.\right\}.
  \end{equation*}  
  The algebra structure is defined as usual, \mbox{i.e.}
  \begin{equation*}
    \left(\sum_{w\in A^*}a_ww\right)\left(\sum_{w\in A^*}b_ww\right)
    =\sum_{\substack{w=uv\\w\in A^*}}a_ub_vw.\\
  \end{equation*}
  The Lie bracket is defined as $[x,y] = xy - yx$ for $x,y\in
  {\mathbb R}\langle\langle A\rangle\rangle$.  For $w=v_{i_1}\cdots v_{i_n}\in
  A^*$, $\mathfrak{r}(w)$ denotes
  $[v_{i_1},[v_{i_2},[\dots,[v_{i_{n-1}},v_{i_n}]\dots]]]$.  We define
  $\mathcal{L}_{\mathbb R}(A)$
  as the set of Lie polynomials in ${\mathbb R}\langle
  A\rangle$ and $\mathcal{L}_{\mathbb R}((A))$ as the set of Lie series.  
  This means that $\mathcal{L}_{\mathbb R}(A)$ is the smallest 
  sub-${\mathbb R}$-module
  of ${\mathbb R}\langle A\rangle$ including $A$ and is closed under the
  Lie bracket, and that $\mathcal{L}_{\mathbb R}((A))$ is
  the set of elements of
  ${\mathbb R}\langle\langle A\rangle\rangle$ whose homogeneous
  components belong to $\mathcal{L}_{\mathbb R}(A)$.  We note that 
  Lie polynomials correspond to vector fields while general polynomials 
  do not necessarily.  
  For $m\in\mathbb{Z}_{\geq 0}$,  let $j_m$ be a map defined by 
  \begin{equation*}
    j_m\left(\sum_{w\in A^*}a_w w\right) = \sum_{\|w\|\leq m}a_w w.
  \end{equation*}
  For arbitrary $P, Q\in {\mathbb R}\langle A\rangle$, the inner product
  $\langle P, Q\rangle$ is defined by
  \begin{equation*}
    \langle P, Q\rangle = \sum_{w\in A^*}(P, w)(Q, w).
  \end{equation*}
  Moreover we let $\|P\|_2 = \left(\langle P, P\rangle\right)^{1/2}$
  for $P\in \mathbb{R}\langle A\rangle$.  
  For $P\in {\mathbb R}\langle\langle A\rangle\rangle$ with $(P, 1)=0$, we
  can define $\exp(P)$ as $1+\sum_{k=1}^\infty P^k/k!$. 
  In addition, $\log(Q)$ 
  can be defined as $\sum_{k=1}^\infty(-1)^{k-1}(Q-1)^k/k$ for
  $Q\in{\mathbb R}\langle\langle A\rangle\rangle$ with $(Q,1) = 1$.
  Then the following relations hold:
  \begin{equation*}
    \log(\exp(P)) = P\quad\text{and}\quad
    \exp(\log(Q)) = Q.
  \end{equation*}
  \par
  By the natural identification
  $\mathbb{R}\langle\langle A\rangle\rangle
  \approx\mathbb{R}^\infty$, we can induce the direct product topology
  into $\mathbb{R}\langle\langle A\rangle\rangle$.  
  Then, $\mathbb{R}\langle\langle A\rangle\rangle$ becomes a Polish space by 
  the topology. We can also consider its Borel $\sigma$-algebra
  $\mathcal{B}(\mathbb{R}\langle\langle A\rangle\rangle)$,
  $\mathbb{R}\langle\langle A\rangle\rangle$-valued random variables,
  their expectations, and other notions as usual.
  \par
  Let $\Phi$ be the homomorphism between $\mathbb{R}\langle A\rangle$ 
  and the $\mathbb{R}$-algebra
  consisting of smooth differential operators over $\mathbb{R}^N$
  such that 
  \begin{equation}\label{Def:Phi}\begin{split}
      \Phi(1) &= \rm{Id}, \\
      \Phi(v_{i_1}\cdots v_{i_n}) &= V_{i_1}\cdots V_{i_n}
      \quad \text{for}\ i_1,\dots,i_n\in\{0,1,\dots,d\}.
  \end{split}\end{equation}
  
  Considering the scaling property of the Brownian motion, we 
  define the rescaling operator $\Psi_s$ depending on $\|\cdot\|$.  
  For $s\in\mathbb{R}_{>0}$, 
  $\Psi_s:\mathbb{R}\langle\langle A\rangle\rangle
  \longrightarrow\mathbb{R}\langle\langle A\rangle\rangle$
  is defined as follows:
  \begin{equation*}
    \Psi_s\left(
    \sum_{m=0}^\infty P_m
    \right)=\sum_{m=0}^\infty s^{m/2}P_m\quad \text{where
      $ P_m\in\mathbb{R}\langle
      A\rangle_m$.}
  \end{equation*}
  \par
  For a smooth vector field $V$, i.~e.~an element of
  $C_b^\infty\left({\mathbb R}^N;{\mathbb R}^N\right)$,
  $\exp\left(V\right)(x)$ denotes the solution at time $1$ of the ordinary
  differential equation
  \begin{equation*}
    \frac{dz_{t}}{dt}=V\left( z_{t}\right) ,\quad z_{0}=x.
  \end{equation*}
  We also define $\|V\|_{C^{n}}$ for $V\in C^{\infty}_b(\mathbb{R}^N;
  \mathbb{R}^N)$ as follows: 
  \begin{equation*}\begin{split}
      \|V\|&=\sup\left\{|V(x)|; \ x\in\mathbb{R}^N\right\} \\
      \left\Vert V^{(n)}\right\Vert&=
      \sup\left\{\left|V^{(n)}_{(x)}(U_1, U_2, \dots, U_n)\right|;
      \ x\in\mathbb{R}^N\, \text{and}\,
      \left| U_i\right| =1,\,\text{for}\, i=1,\dots, n\right\}\\
      \left\Vert V\right\Vert _{C^{n}}&=\sum_{i=0}^{n}\left\|V^{(i)}\right\|.
  \end{split}\end{equation*}
  Here $V^{(k)}$ denotes the $k$th order total differential
  of $V$, \mbox{i.e.} 
  \begin{equation*}
    V^{(n)}_{(x)}\left(U_1,U_2,\dots, U_n\right)=
    \sum_{i=1}^N\sum_{j_1=1}^N\cdots\sum_{j_n=1}^N
    \frac{\partial^nV_i}{\partial x_{j_1}\cdots\partial x_{j_n}}(x)
    U^{j_1}_1\cdots U^{j_n}_n e_i
  \end{equation*}
  where each $e_i$ denotes an $N$-dimensional unit vector,
  $\{e_1,\dots,e_N\}$ 
  forms an orthonormal basis of ${\mathbb R}^N$, and $U^j_k$ is 
  the $j$th component of $U_k\in{\mathbb R}^N$.
  
  \subsection{Main results}
  
  Since in this paper we deal with the operators that are not
  necessarily linear with respect to time $t$, we introduce
  the following definition:
  
  \begin{myDef}
    A map $g$ from $C_b^\infty({\mathbb R}^N; {\mathbb R}^N)$ to the set of all
    maps from ${\mathbb R}^N$ to ${\mathbb R}^N$ is called
    an integration scheme of order $m$
    if there exists a positive constant $C_m$ such that 
    \begin{equation}\label{RK_er}
      \sup_{x\in{\mathbb R}^N}\left\vert g(W)(x)
      -\exp{(W)}(x)\right\vert\leq C_m\|W\|_{C^{m+1}}^{m+1}
    \end{equation}
    for all $W\in C^\infty_b({\mathbb R}^N; {\mathbb R}^N)$.  
    Let $\mathcal{IS}(m)$ be 
    the set of all integration schemes of order $m$.
  \end{myDef}
  This definition is a generalization of the usual order
  of approximation.
  
  \begin{myDef}
    For $z_1, z_2\in\mathcal{L}_{\mathbb R}((A))$, 
    we define $z_2\hausdorff z_1$ as $\log(\exp(z_2)\exp(z_1))$.  Then 
    from the definition, for $z_1,z_2,z_3\in\mathcal{L}_{\mathbb R}((A))$,
    \begin{equation*}
      \left(z_1\hausdorff z_2\right)\hausdorff z_3
      =\log\left(\exp(z_1)\exp(z_2)\exp(z_3)\right)
      =z_1\hausdorff \left(z_2\hausdorff z_3\right),
    \end{equation*}
    and so we can write for $z_1,\dots, z_n\in\mathcal{L}_{\mathbb R}((A))$
    \begin{equation}
      z_1\hausdorff z_2\hausdorff\cdots\hausdorff z_n
      =\log\left(\exp(z_1)\cdots\exp(z_n)\right).
    \end{equation}
  \end{myDef}
  We notice that $z_2\hausdorff z_1\in\mathcal{L}_{\mathbb R}((A))$ 
  if $z_1, z_2\in\mathcal{L}_{\mathbb R}((A))$ from 
  the Baker--Campbell--Hausdorff formula(\cite{Bourbaki:Lie2}).
  
  The following are the main results.
  \begin{theorem}\label{contp_krk_main_thm1}
    Let $m\geq 1$, $M\geq 2$, and $Z_1,\dots, Z_M$ be 
    $\mathcal{L}_{\mathbb R}((A))$-valued random variables.  Assume that 
    $Z_1,\dots, Z_M$ satisfy the followings:
    \begin{gather}
      Z_i=j_mZ_i \quad\text{for}\ i=1,\dots,M,\\
      E\left[\left\|j_mZ_i\right\|_2\right]<\infty\quad \text{for}\ 
      i=1,\dots,M, \label{Z_condi1}\\
      E\left[\exp\left(a\sum_{j=1}^M
	\left\|\Phi\left(\Psi_s\left(Z_j\right)\right)\right\|
	_{C^{m+1}}\right)\right]<\infty
      \quad \text{for any $a>0$.}\label{Z_condi2} 
    \end{gather}
    Then for $p\in [1, \infty)$ and 
      arbitrary $g_1,\dots,g_M\in\mathcal{IS}(m)$, there exists a
      positive constant $C_{m,M}$ such that 
      \begin{multline}\label{contp_krk_main_thm1_eq}
	\left\|\sup_{x\in{\mathbb R}^N}
	\left|g_1\left(\Phi\left(\Psi_s\left(Z_1\right)\right)\right)\circ
	\cdots\circ g_M\left(\Phi\left(\Psi_s\left(Z_M\right)\right)\right)(x)
	\right.\right.\\
	-\exp\left(\Phi\left(\Psi_s\left(j_m
	\left(
	Z_M\hausdorff\cdots\hausdorff Z_1\right)
	\right)\right)\right)(x)\big|\bigg\|_{L^p}
	\leq C_{m,M}s^{(m+1)/2}
      \end{multline}
      for $s\in(0,1]$ where $C_{m,M}$ depends only on $m$ and $M$.  
    Here for functions $f$ and $g$, $f\circ g(x)$ denotes $f\left(g(x)\right)$ 
    as usual.  
  \end{theorem}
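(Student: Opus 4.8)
The plan is to establish \eqref{contp_krk_main_thm1_eq} in two stages: first replace each integration scheme $g_j$ by the exact time‑one flow $\exp(\Phi(\Psi_s(Z_j)))$, and then compare the composition of these $M$ flows with the single flow $\exp(\Phi(\Psi_s(j_m(Z_M\hausdorff\cdots\hausdorff Z_1))))$. Throughout write $W:=Z_M\hausdorff\cdots\hausdorff Z_1$ and, for fixed $s\in(0,1]$, $V_j:=\Phi(\Psi_s(Z_j))$ and $\widetilde V:=\Phi(\Psi_s(j_mW))$. I would first record three preliminary facts. (i) Since $Z_j=j_mZ_j$ is a Lie polynomial with $(Z_j,1)=0$ and, by the Baker--Campbell--Hausdorff formula, $j_mW$ is again a Lie polynomial, each of $\Psi_s(Z_j)$ and $\Psi_s(j_mW)$ is a finite Lie polynomial whose homogeneous components have $\|\cdot\|$‑weight in $\{1,\dots,m\}$; applying $\Phi$ to a Lie monomial produces an iterated bracket of the $V_i$, so $V_j,\widetilde V\in C_b^\infty({\mathbb R}^N;{\mathbb R}^N)$, and since $\Psi_s$ scales the weight‑$\ell$ component by $s^{\ell/2}$ one gets $\|V_j\|_{C^k}\le s^{1/2}\sum_{\ell=1}^m\|\Phi(P_{j,\ell})\|_{C^k}$, where $P_{j,\ell}$ is the weight‑$\ell$ component of $Z_j$; by \eqref{Z_condi1}--\eqref{Z_condi2} this bound, and its exponential, lie in every $L^q$, and likewise for $\widetilde V$. (ii) Because $\|uv\|=\|u\|+\|v\|$, the rescaling $\Psi_s$ is an algebra homomorphism of ${\mathbb R}\langle\langle A\rangle\rangle$, hence commutes with $\exp$, $\log$, $\hausdorff$ and $j_m$; in particular $\exp(\Psi_s(Z_M))\cdots\exp(\Psi_s(Z_1))=\exp(\Psi_sW)$, and $j_m\exp(\Psi_sW)=j_m\exp(\Psi_s(j_mW))$ since $\exp$ of a series with vanishing constant term preserves agreement of the weight‑$\le m$ parts. (iii) $\Phi$ is an algebra homomorphism and so intertwines $\exp$ and $\log$, hence $\hausdorff$, at the formal level.

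Step 1. I would put $\psi_j:=\exp(V_j)$, $\phi_j:=g_j(V_j)$, and write $\phi_1\circ\cdots\circ\phi_M-\psi_1\circ\cdots\circ\psi_M$ as the telescoping sum over $k=1,\dots,M$ of $\psi_1\circ\cdots\circ\psi_{k-1}\circ\phi_k\circ\phi_{k+1}\circ\cdots\circ\phi_M-\psi_1\circ\cdots\circ\psi_{k-1}\circ\psi_k\circ\phi_{k+1}\circ\cdots\circ\phi_M$. The flow of a $C_b^\infty$ vector field $V$ is globally Lipschitz with constant $\le e^{\|V\|_{C^1}}$, so the sup‑norm of the $k$‑th term is at most $e^{\sum_i\|V_i\|_{C^1}}\sup_z|\phi_k(z)-\psi_k(z)|$, and by the defining inequality \eqref{RK_er} of $\mathcal{IS}(m)$ the last supremum is $\le C_m\|V_k\|_{C^{m+1}}^{m+1}$. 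By (i) one has $\|V_k\|_{C^{m+1}}^{m+1}\le s^{(m+1)/2}R_k$ for a random variable $R_k$ that lies in every $L^q$, and \eqref{Z_condi2} handles the exponential factor; Hölder's inequality then bounds $\bigl\|\sup_x|\phi_1\circ\cdots\circ\phi_M(x)-\psi_1\circ\cdots\circ\psi_M(x)|\bigr\|_{L^p}$ by $C_{m,M}s^{(m+1)/2}$, uniformly in $s\in(0,1]$.

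Step 2. For $V\in C_b^\infty({\mathbb R}^N;{\mathbb R}^N)$ and a smooth $h$ all of whose derivatives of order $\ge1$ are bounded — in particular a coordinate function $h(x)=x_a$ — Taylor's formula along $t\mapsto h(\exp(tV)(x))$ gives $h\circ\exp(V)=\sum_{i=0}^m\frac1{i!}V^ih+R$ with $\|R\|_\infty\le\frac1{(m+1)!}\|V^{m+1}h\|_\infty$, and pre‑composition with a flow (a diffeomorphism of ${\mathbb R}^N$) leaves the sup‑norm unchanged. Applying this successively with $V=V_1,\dots,V_M$ and expanding the products, I expect to reach $h\circ(\psi_1\circ\cdots\circ\psi_M)=\Phi\bigl(j_m\exp(\Psi_sW)\bigr)h+(\text{discarded terms})$, in which every discarded term is either a Taylor remainder carrying a factor $\|V_k\|_{C^{m+1}}^{m+1}$ or a monomial in the $V_j$ of $\|\cdot\|$‑weight $\ge m+1$ carrying a factor $s^{r/2}$ with $r\ge m+1$; only finitely many monomials appear, with total order bounded in terms of $m$ and $M$, and every coefficient and $V$‑norm entering the estimate lies in every $L^q$ by \eqref{Z_condi1}--\eqref{Z_condi2}. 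The identity here uses $\exp(\Psi_s(Z_M))\cdots\exp(\Psi_s(Z_1))=\exp(\Psi_sW)$ from (ii)--(iii). The same computation applied to the single flow gives $h\circ\exp(\widetilde V)=\Phi\bigl(j_m\exp(\Psi_s(j_mW))\bigr)h+(\text{discarded, of size }O(s^{(m+1)/2}))$, and by (ii) the two main operators agree. Taking $h$ equal to each coordinate function $x\mapsto x_a$, $a=1,\dots,N$ — so that the constant term $h(x)$ cancels in the difference and only bounded derivatives of $h$ intervene — then $\sup_x$, then $\|\cdot\|_{L^p}$, gives $\bigl\|\sup_x|\psi_1\circ\cdots\circ\psi_M(x)-\exp(\widetilde V)(x)|\bigr\|_{L^p}\le C_{m,M}s^{(m+1)/2}$. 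Combining this with Step 1 and the triangle inequality yields \eqref{contp_krk_main_thm1_eq}.

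The hard part will be the bookkeeping in Step 2: arranging the iterated Taylor expansions so that precisely the weight‑$\le m$ part of $\Phi(\exp(\Psi_sW))$ survives into the main term, verifying that every discarded monomial — Taylor remainder or high‑weight term — really carries the power $s^{(m+1)/2}$ (this is where $s\le1$ is essential), keeping the order of differentiation that is needed bounded by a constant depending only on $m$ and $M$, and keeping the attendant random factors inside products that \eqref{Z_condi1}--\eqref{Z_condi2} place in $L^p$ uniformly in $s$. The clean cancellation of the two main operators rests entirely on the compatibility of $\Psi_s$, $\Phi$, $j_m$ and $\hausdorff$ noted in (ii)--(iii); everything else is routine estimation.
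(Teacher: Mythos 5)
Your proposal is correct and follows essentially the same route as the paper: the identical two-term splitting (replace the $g_j$ by exact flows, then compare the composed flows with the single flow of the truncated BCH field), with the first part handled by the $\mathcal{IS}(m)$ bound plus a Gronwall/Lipschitz telescoping under condition~\eqref{Z_condi2} (the paper's Proposition~\ref{prop_Gronwall} and the induction leading to~\eqref{krk_main_2}), and the second part by iterated Taylor expansion pivoting on the truncated operator $\Phi\bigl(\Psi_s\bigl(j_m\exp(j_m(Z_M\hausdorff\cdots\hausdorff Z_1))\bigr)\bigr)$ applied to coordinate functions (the paper's Proposition~\ref{K_prop1} and Lemmas~\ref{K_lem1}--\ref{K_lem5}). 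The only cosmetic difference is that you telescope using the Lipschitz constant of the exact flows on the left rather than the paper's Lipschitz-type bound for $g(W)$ itself, which changes nothing essential.
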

  
  For $i=1,\dots, d,$ and $j=1,\dots,M$, let $S^i_j$ be 
  ${\mathbb R}$-valued Gaussian random variables 
  and for $j, j^\prime=1,\dots,M$, let $c_j$ and $R_{jj^\prime}$ 
  be real numbers such that  
  \begin{equation}\label{Z_coeff_rvs}
    \sum_{j=1}^Mc_j  =1,\quad
    E\left[S_j^i\right]  =0,\quad\text{and}\quad
    E\left[S_j^i S_{j^\prime}^{i^\prime}\right]
    =R_{jj^\prime}\delta_{ii^\prime}
  \end{equation} 
  for $i, i^\prime=1,\dots, d$.  We let $S^0_j=c_j$ for convenience.  
  Taking~\eqref{PDE} into account, we let $Z_1,\dots, Z_M$ 
  be random variables such that $Z_j=c_jv_0+\sum_{i=1}^dS^i_jv_i$ 
  for $j=1,\dots, M$ and that 
  \begin{equation}\label{Z_j_condi}
    E\left[j_m\left(\exp\left(Z_1\right)\cdots\exp\left(
      Z_M\right)\right)\right]
    =j_m\left(\exp\left(v_0+\frac{1}{2}\sum_{i=1}^dv_i^2\right)\right).
  \end{equation}
  In usual ODE cases, this type of approximation technique is known as
  a splitting method (\cite{HairerLubichWanner:2006}).
  The stochastic versions of this technique are considered
  in~\cite{LyonsVictoir:2002} and~\cite{NinomiyaVictoir:2005}.

  \begin{myCor}\label{contp_krk_main_cor}
    Suppose that the following UFG condition is satisfied:
    \begin{description}
    \item[\textbf{(UFG)}] 
      There exist an integer $l$ and
      $\varphi_{u,u^\prime}\in C_b^\infty(\mathbb{R}^N;\mathbb{R})$
      which satisfy
      \begin{equation}\label{eq:UFG}
	\Phi(\mathfrak{r}(u))
	= \sum_{u^\prime\in A^*_{\leq l}\setminus\{1,v_0\}}
	\varphi_{u,u^\prime}\Phi(\mathfrak{r}(u^\prime))
      \end{equation}
      for any $u\in A^*\setminus\{1,v_0\}$.
    \end{description}
    For $j=1,\dots, M$ let $Z_j$ be $\mathcal{L}_{\mathbb R}((A))$-valued 
    random variables constructed as above 
    and define linear operators $Q_{(s)}$ for $s\in(0,1]$ by 
      \begin{equation}\label{Q_s_f}
	\left(Q_{(s)}f\right)(x)
	=E\left[f\left(g\left(\Phi\left(\Psi_s\left(Z_1\right)\right)\right)
	  \circ\cdots
	  \circ g\left(\Phi\left(\Psi_s\left(Z_M\right)\right)\right)(x)
	  \right)\right]
      \end{equation}
      where $f\in C_{b}^\infty({\mathbb R}^N;{\mathbb R})$
      and $g\in\mathcal{IS}(m)$.  Then 
      \begin{equation}\label{contp_krk_main_cor_order}
	\left\|
	P_sf-Q_{(s)}f
	\right\|_\infty\leq Cs^{(m+1)/2}
	\left\|
	\grad(f)
	\right\|_\infty
      \end{equation}
      where $C$ is a positive constant.
  \end{myCor}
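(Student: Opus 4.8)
The strategy is to split $P_sf-Q_{(s)}f$ into the error caused by replacing each integration scheme $g$ by the exact ODE flow $\exp(\cdot)$, which Theorem~\ref{contp_krk_main_thm1} controls, and the error caused by replacing the expectation of the resulting random flow by the semigroup $P_s$, which is the cubature estimate on Wiener space under the UFG condition (the part descending from~\cite{kusuoka:2001aprx}). As a preliminary I would check that the random variables $Z_j=c_jv_0+\sum_{i=1}^dS^i_jv_i$ satisfy the hypotheses of Theorem~\ref{contp_krk_main_thm1}: each $Z_j$ lives in $\mathbb{R}\langle A\rangle$ (words $v_0$, with $\|v_0\|=2$, and $v_i$, with $\|v_i\|=1$), so $Z_j=j_mZ_j$; by~\eqref{Z_coeff_rvs}, $E[\|j_mZ_j\|_2]\le(c_j^2+\sum_iR_{jj})^{1/2}<\infty$, which is~\eqref{Z_condi1}; and, since $\Psi_sZ_j=s\,c_jv_0+\sqrt{s}\,\sum_iS^i_jv_i$, one has $\|\Phi(\Psi_sZ_j)\|_{C^{m+1}}\le C(1+\sum_i|S^i_j|)$ for $s\in(0,1]$, so~\eqref{Z_condi2} holds because linear functionals of a Gaussian vector have exponentially integrable tails. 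Thus Theorem~\ref{contp_krk_main_thm1} applies with $g_1=\dots=g_M=g$ and $p=1$.

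Write $\Xi:=Z_M\hausdorff\cdots\hausdorff Z_1\in\mathcal{L}_{\mathbb R}((A))$ and $W_{(s)}:=\Phi(\Psi_s(j_m\Xi))$, which for each realization is an element of $C_b^\infty(\mathbb{R}^N;\mathbb{R}^N)$. From~\eqref{contp_krk_main_thm1_eq} at $p=1$, using $|f(a)-f(b)|\le\|\grad f\|_\infty\,|a-b|$, we obtain, uniformly in $x$,
\begin{equation*}
  \Bigl|(Q_{(s)}f)(x)-E\bigl[f(\exp(W_{(s)})(x))\bigr]\Bigr|\le C_{m,M}\,s^{(m+1)/2}\,\|\grad f\|_\infty ,
\end{equation*}
so it remains to prove $\bigl\|E[f(\exp(W_{(s)})(\cdot))]-P_sf\bigr\|_\infty\le C\,s^{(m+1)/2}\|\grad f\|_\infty$ for $s\in(0,1]$.

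For this I would Taylor-expand the flow, so that $f(\exp(W_{(s)})(x))$ equals $\sum_{k=0}^{m}\tfrac1{k!}(W_{(s)}^{k}f)(x)$ plus an explicit integral remainder, and use that $\Phi$ and $\Psi_s$ respect products to write $\sum_{k=0}^{m}\tfrac1{k!}W_{(s)}^{k}f=\Phi\bigl(\Psi_s\bigl(\sum_{k=0}^{m}\tfrac1{k!}(j_m\Xi)^{k}\bigr)\bigr)f$; since $\Xi$ has no constant term, $\sum_{k=0}^{m}\tfrac1{k!}(j_m\Xi)^{k}$ coincides with $j_m\bigl(\exp(Z_M)\cdots\exp(Z_1)\bigr)$ except in words of $\|\cdot\|\ge m+1$, on which $\Psi_s$ puts a factor $s^{(m+1)/2}$ or less. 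Taking expectations and invoking~\eqref{Z_j_condi}—which is valid also with the $Z_j$ in reversed order, because the covariance structure~\eqref{Z_coeff_rvs} makes $E[\exp(Z_1)\cdots\exp(Z_M)]$ and $E[\exp(Z_M)\cdots\exp(Z_1)]$ word reversals of each other while $j_m\exp(v_0+\tfrac12\sum_{i=1}^dv_i^2)$ is reversal invariant—the main part becomes $\Phi\bigl(\Psi_s(j_m\exp(v_0+\tfrac12\sum_{i=1}^dv_i^2))\bigr)f=\sum_{2k\le m}\tfrac{s^k}{k!}L^kf$, which is precisely the Taylor polynomial, of the pertinent order, of $P_sf=e^{sL}f$. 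Consequently $E[f(\exp(W_{(s)})(\cdot))]-P_sf$ is a finite sum of explicit terms — the $\|\cdot\|\ge m+1$ pieces, the expectation of the flow's Taylor remainder, and $P_s$'s own Taylor remainder — each formally carrying a factor $s^{(m+1)/2}$ or higher. That the sup norm of this sum is $\le C\,s^{(m+1)/2}\|\grad f\|_\infty$ — rather than merely $\le C\,s^{(m+1)/2}\|f\|_{C^n}$ for some large $n$ — is exactly the cubature estimate under the UFG condition established by Kusuoka in~\cite{kusuoka:2001aprx}: relation~\eqref{eq:UFG} rewrites every iterated bracket $\Phi(\mathfrak{r}(u))$ through the fixed finite family $\{\Phi(\mathfrak{r}(u')):u'\in A^*_{\leq l}\setminus\{1,v_0\}\}$, and the attendant gradient estimates for $P_t$, of the schematic form $\|V_{i_1}\cdots V_{i_r}P_tf\|_\infty\le C\,t^{-\gamma_r}\|\grad f\|_\infty$, allow every residual term to be absorbed into $\|\grad f\|_\infty$; the moment bounds of the first paragraph make the accompanying expectations finite, so $C<\infty$. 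Combining this with the display above gives~\eqref{contp_krk_main_cor_order}.

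The main obstacle is this last step. Once~\eqref{Z_j_condi} has cancelled the leading terms, the surviving error is a priori a differential operator of order greater than one applied to $f$, and it is only through the UFG condition — the rewriting~\eqref{eq:UFG} of long brackets into a controlled finite family together with the ensuing Kusuoka-type gradient estimates for the semigroup — that it can be re-expressed as a bounded operator acting on $\grad f$. The delicate bookkeeping is to separate the polynomials occurring in the expansion into those that are genuine Lie elements, to which~\eqref{eq:UFG} applies, and those that merely contribute at order $s^{(m+1)/2}$ or beyond, which may be discarded.
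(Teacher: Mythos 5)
Your overall architecture is the right one, and it is in fact the only derivation the paper itself has in mind: the paper never proves this corollary, but obtains it by combining Theorem~\ref{contp_krk_main_thm1} (which disposes of the discrepancy between the Runge--Kutta maps $g$ and the exact flows, exactly as in your second paragraph, via the Lipschitz bound $|f(a)-f(b)|\leq\|\grad(f)\|_\infty|a-b|$ and the $L^1$ estimate) with Kusuoka's approximation theorem under the UFG condition from~\cite{kusuoka:2001aprx} and~\cite{kusuoka:2003}, which states precisely that the operator $f\mapsto E\bigl[f\bigl(\exp\bigl(\Phi(\Psi_s(j_m\Xi))\bigr)(\cdot)\bigr)\bigr]$ built from a random Lie element $\Xi$ whose exponential matches $j_m\exp\bigl(v_0+\tfrac12\sum_{i=1}^d v_i^2\bigr)$ in mean approximates $P_s$ to within $Cs^{(m+1)/2}\|\grad(f)\|_\infty$. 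Your verification of hypotheses \eqref{Z_condi1}--\eqref{Z_condi2} and your observation that \eqref{Z_j_condi} is stable under reversing the order of the $Z_j$ (needed because the innermost flow in \eqref{Q_s_f} is that of $Z_M$) are correct and are genuine points that the paper leaves implicit.

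The gap is in your third paragraph, where you sketch a proof of the second estimate rather than quoting it. After Taylor expansion and the cancellation provided by \eqref{Z_j_condi}, the surviving terms are of the form $s^{\|w\|/2}c_w\,(\Phi(w)f)(x)$ with $\|w\|\geq m+1$, together with integral remainders in which high-order operators act on $f$ composed with a flow; in none of these terms does the semigroup $P_t$ act on $f$. The Kusuoka--Stroock gradient estimates you invoke, of the form $\|V_{[w_1]}\cdots V_{[w_k]}P_tf\|_\infty\leq Ct^{-\gamma}\|\grad(f)\|_\infty$, smooth $P_tf$, not $f$, so they cannot be applied to these residual terms; moreover the UFG identity \eqref{eq:UFG} rewrites iterated \emph{brackets}, whereas $\Phi(w)$ is a \emph{product} of vector fields and the level-$(m+1)$ discrepancy is not a Lie element, so UFG does not reduce it to a first-order operator acting on $\grad(f)$. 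Consequently the mechanism you describe (Taylor expansion, moment cancellation, absorption via semigroup gradient bounds) cannot by itself yield a one-step bound in $\|\grad(f)\|_\infty$ with a constant independent of $f$; Kusuoka's proof of that local estimate is a substantially different argument resting on Malliavin-calculus/integration-by-parts machinery under UFG. So either you cite that theorem as a black box -- in which case your proof is essentially the paper's implicit one -- or the absorption step remains unproved as written.
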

  \begin{myRem}
    In~~\cite{kusuoka:2005:presentation}, it is shown that for the operator
    $Q_{(s)}$ defined above, there exists a constant $C$ and
    \begin{equation*}
      \left(P_sf\right)(x)-\left(Q_{(s)}f\right)(x)
      =Cs^{(m+1)/2}+O\left(s^{(m+3)/2}\right)
    \end{equation*}
    holds.
    This means that the Romberg extrapolation can be applied to
    our new algorithm.
  \end{myRem}
  
  The intuitive understanding is that once we find the random variables 
  $Z_1,\dots, Z_M$, we can numerically approximate $\exp(Z_i(\omega))$ by 
  applying the integration scheme $g_i$ repeatedly for each $i$ as seen 
  in~\eqref{contp_krk_main_thm1_eq} in Theorem~\ref{contp_krk_main_thm1}.  
  Therefore, our primary interest is in finding $Z_1,\dots, Z_M$. 
  
  \begin{theorem}\label{contp_krk_main_thm2}
    Let $m=5$ and $M=2$. Then \eqref{Z_j_condi} holds 
    if and only if 
    \begin{equation}\label{soln_m_5_n_2}
      \begin{split}
	c_1=\frac{\mp\sqrt{2\left(2u-1\right)}}{2},\quad
	c_2=1\pm\frac{\sqrt{2\left(2u-1\right)}}{2},\quad R_{11}=u&\\
	R_{22}=1+u\pm\sqrt{2\left(2u-1\right)},\quad
	R_{12}=-u\mp\frac{\sqrt{2\left(2u-1\right)}}{2}&
      \end{split}
    \end{equation}
    \noindent  
    for some $u\geq 1/2$. 
  \end{theorem}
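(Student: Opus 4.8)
The idea is to convert the algebraic identity \eqref{Z_j_condi}, specialized to $m=5$ and $M=2$, into a finite system of polynomial equations in the five real parameters $c_1,c_2,R_{11},R_{12},R_{22}$, and then to solve that system. Write $Z_j=c_jv_0+\sum_{i=1}^d S_j^i v_i$, which is homogeneous of word length $1$. Since $\|\cdot\|$ is additive under concatenation and $\|w\|\ge|w|$, the coefficient of a word $w$ with $\|w\|\le5$ in $j_5(\exp(Z_1)\exp(Z_2))$ receives contributions only from the terms $Z_1^aZ_2^b/(a!\,b!)$ with $a+b=|w|\le5$; and since $v_0+\tfrac12\sum_{i=1}^d v_i^2$ is $\|\cdot\|$-homogeneous of degree $2$, one has $j_5(\exp(v_0+\tfrac12\sum_i v_i^2))=1+(v_0+\tfrac12\sum_i v_i^2)+\tfrac12(v_0+\tfrac12\sum_i v_i^2)^2$. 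Thus \eqref{Z_j_condi} is equivalent to the finite list of scalar equations obtained by equating, for each word $w$ with $\|w\|\le5$, the coefficient of $w$ in $E[\exp(Z_1)\exp(Z_2)]$ with that in $1+(v_0+\tfrac12\sum_i v_i^2)+\tfrac12(v_0+\tfrac12\sum_i v_i^2)^2$; the left-hand coefficients are polynomials in the parameters, computed by expanding $(Z_1^aZ_2^b,w)$ with the non-commutative product rule and then applying $E[S_j^i]=0$, $E[S_j^iS_{j'}^{i'}]=R_{jj'}\delta_{ii'}$, and Wick's theorem for fourth moments.

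Next I would prune this list. Using invariance of the joint law of $(S_1^i,S_2^i)$ under permuting $i\in\{1,\dots,d\}$, independence across distinct indices, and the vanishing of odd Gaussian moments, one checks that the equations for all words of weight $0$, $1$, $3$, $5$ are trivially $0=0$, as are those for all weight-$4$ words containing one $v_0$ together with two letters of distinct positive indices, and for $v_0v_0$ (the last because $c_1+c_2=1$). The surviving ones are: the word $v_0$, which reproduces $c_1+c_2=1$; the word $v_iv_i$, giving $\tfrac12R_{11}+R_{12}+\tfrac12R_{22}=\tfrac12$, call it $(\mathrm A)$; the three weight-$4$ words $v_0v_iv_i$, $v_iv_0v_i$, $v_iv_iv_0$, giving three bilinear equations $(\mathrm B)$, $(\mathrm C)$, $(\mathrm D)$ in the $c$'s and $R$'s; and the weight-$4$ words $v_iv_iv_iv_i$ and, when $d\ge2$, $v_iv_iv_jv_j$, $v_iv_jv_iv_j$, $v_iv_jv_jv_i$, which after Wick's theorem and $(\mathrm A)$ all reduce to the single new relation $R_{11}R_{22}-R_{12}^2=\tfrac12$, call it $(\mathrm E)$. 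Moreover $(\mathrm E)$ turns out to be a consequence of $(\mathrm A)$--$(\mathrm D)$, so for every $d\ge1$ the system is $\{c_1+c_2=1,\ (\mathrm A),\ (\mathrm B),\ (\mathrm C),\ (\mathrm D)\}$.

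To solve it I would use $(\mathrm A)$ to eliminate $R_{22}=1-R_{11}-2R_{12}$; then subtract $(\mathrm D)$ from $(\mathrm B)$ and use $(\mathrm A)$ again to get $c_1=R_{11}+R_{12}$, hence $c_2=1-R_{11}-R_{12}$; substituting both into $(\mathrm C)$ and clearing denominators yields $(R_{11}+R_{12})^2=R_{11}-\tfrac12$. Writing $u:=R_{11}$, this forces $u\ge\tfrac12$ and $R_{11}+R_{12}=\pm\sqrt{u-\tfrac12}=\pm\tfrac12\sqrt{2(2u-1)}$, and substituting back recovers exactly the five expressions in \eqref{soln_m_5_n_2}, the two branches there matching the two signs. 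For the reverse implication one substitutes \eqref{soln_m_5_n_2} into $(\mathrm A)$--$(\mathrm E)$ (the normalization $c_1+c_2=1$ being manifest) and verifies each by direct computation; this also shows $R_{11}\ge\tfrac12>0$ and $R_{11}R_{22}-R_{12}^2=\tfrac12>0$, so $(R_{jj'})$ is positive definite and Gaussian variables $S_j^i$ with the prescribed covariance indeed exist.

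The main obstacle will be the combinatorial bookkeeping in the pruning step: for each relevant word one must expand $(Z_1^aZ_2^b,w)$ correctly, tracking the ordered splittings of $w$ and the positions of its $v_0$ letters, and then perform the Wick contractions. The only genuinely non-routine observation is that the length-$4$ words with four positive-index letters collapse, after $(\mathrm A)$, to the single relation $(\mathrm E)$; seeing this cleanly amounts to noticing that $v_iv_iv_iv_i$ contributes $\tfrac18(R_{11}+2R_{12}+R_{22})^2$ to the left-hand side while each two-pair pattern contributes $\tfrac1{24}+\tfrac1{12}(R_{12}^2-R_{11}R_{22})$. Everything else is linear or quadratic algebra; a minor point of care is the locus $R_{11}=R_{22}$, on which $(\mathrm C)$ alone does not determine $c_1$, but this is harmless because $c_1=R_{11}+R_{12}$ has already been obtained from $(\mathrm B)-(\mathrm D)$.
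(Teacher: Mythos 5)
Your proposal is correct and takes essentially the same route as the paper: Section~2 likewise reduces \eqref{Z_j_condi} to equating coefficients of the words of $\Vert\cdot\Vert$-weight at most $5$ (via the Gaussian-moment formula of Lemma~\ref{lem_coeff_repr} and Theorem~\ref{thm_coeff_repr} together with the expansion \eqref{coeff_value}), yielding a small polynomial system in $c_j$ and $R_{jj'}$ whose solution is \eqref{soln_m_5_n_2}. Your explicit elimination ($c_1=R_{11}+R_{12}$, then $(R_{11}+R_{12})^2=R_{11}-\tfrac12$), the observation that the quartic-word equations add nothing beyond $R_{11}R_{22}-R_{12}^2=\tfrac12$ (itself implied by the lower-order equations), and the positive-definiteness check merely supply the details of the solving step that the paper states without proof.
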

  
  \begin{myRem}
    We can show that in the case where $m=7$ and $M=3$ there is no solution 
    to~\eqref{Z_j_condi}. 
  \end{myRem}
  
  Now that we have obtained the random variables satisfying~\eqref{Z_j_condi}, 
  we need a practical way of approximating these 
  integration schemes $g_1,\dots, g_M$.  We successfully extend applicability 
  of the general Runge--Kutta method to ODEs to find that it belongs to 
  $\mathcal{IS}(m)$.
  
  Let $A=\left(a_{ij}\right)_{i,j=1,\dots,K}$ with $a_{ij}\in{\mathbb R}$ 
  and $b={}^t\!\left(b_1,\dots,b_K\right)\in{\mathbb R}^K$.
  If $(A,b)$ satisfies 
  the $m$-th-order conditions defined as~\eqref{coeff_condis} 
  in Section~\ref{sec:3},  
  the $K$-stage Runge--Kutta method of order $m$ 
  in the sense of~\cite{Butcher:1987} can be
  written as follows: 
  \begin{equation}\label{original_RK}
    \begin{split}
      Y_i\left(W,s\right)
      &= y_0+s\sum_{j=1}^{K}a_{ij}W\left(Y_j(W,s)\right),\\
      Y(y_0;W,s) &= y_0+s\sum_{i=1}^{K}b_iW\left(Y_i(W,s)\right) 
    \end{split}
  \end{equation}
  for $W\in C^\infty_b\left({\mathbb R}^N;
  {\mathbb R}^N\right)$, $s\in{\mathbb R}_{>0}$, 
  and $y_0\in{\mathbb R}^N$.  
  Let $g(W)(y_0)$ be $Y(y_0;W,1)$.  We show that 
  $g$ belongs to $\mathcal{IS}(m)$ in Theorem~\ref{krk_main_thm3}.
  
  \begin{myRem}
    Our scheme is fundamentally different from 
    the class of numerical methods sometimes referred to as
    stochastic Runge--Kutta methods
    (\cite{BurrageBurrage:1998}\cite{roessler:2003}\cite{rumelin:1982}).
  \end{myRem}
  
  \section{Proof of Theorem~\ref{contp_krk_main_thm1}}\label{sec:1}
  We split the left-hand side of~\eqref{contp_krk_main_thm1_eq} as 
  \begin{equation}\label{krk_main_0}\begin{split}
      &\big\|\sup_{x\in{\mathbb R}^N}
      \left|g_1\left(\Phi_s\left(Z_1\right)\right)\circ
      \cdots\circ g_M\left(\Phi_s\left(Z_M\right)\right)(x)\right.\\
      &\quad\quad\quad\quad\quad -\left.\exp\left(\Phi_s\left(
      j_m\left(
      Z_M\hausdorff\cdots\hausdorff Z_1
      \right)
      \right)
      \right)(x)\right|\big\|_{L^p}\\
      &\leq\big\|\sup_{x\in{\mathbb R}^N}
      \left|\exp\left(\Phi_s\left(Z_1\right)\right)\circ
      \cdots\circ\exp\left(\Phi_s\left(Z_M\right)\right)(x)\right.\\
      &\quad\quad\quad\quad\quad -\left.\exp\left(\Phi_s\left(
      j_m\left(Z_M\hausdorff\cdots\hausdorff Z_1\right)
      \right)\right)(x)\right|
      \big\|_{L^p}\\
      &\quad\quad+\big\|\sup_{x\in{\mathbb R}^N}\left|g_1
      \left(\Phi_s\left(Z_1\right)\right)\circ
      \cdots\circ g_M\left(\Phi_s\left(Z_M\right)\right)(x)\right.\\
      &\quad\quad\quad\quad\quad -\left.\exp
      \left(\Phi_s\left(Z_1\right)\right)\circ
      \cdots\circ\exp\left(\Phi_s\left(Z_M\right)\right)(x)\right|\big\|_{L^p}.
  \end{split}\end{equation}
  Evaluation of each term of the right-hand side of~\eqref{krk_main_0} 
  will be given by Lemma~\ref{K_lem5} or~\eqref{krk_main_2} in this section. 
  \begin{myProp}\label{K_prop1}
    \ \\
    \begin{enumerate}[{\rm (1)}]
    \item For any $V\in C_{b}^\infty({\mathbb R}^N;{\mathbb R}^N)$, $f\in
      C^\infty({\mathbb R}^N; {\mathbb R})$, $x\in {\mathbb R}^N$ and $n\geq 1$,
      \begin{equation}\label{K_prop1_eq1}
	f\left(\exp(tV)(x)\right)=\sum_{k=0}^n\frac{t^k}{k!}
	\left(V^kf\right)(x)+\int_0^t\frac{(t-s)^n}{n!}\left(V^{n+1}f\right)
	\left(\exp(sV)(x)\right)ds.
      \end{equation}
      
    \item For all $z\in\mathcal{L}_{{\mathbb R}}((A))$ and $n,m\geq 1$,
      \begin{multline}\label{K_prop1_eq2}
	\sup_{x\in{\mathbb R}^N}
	\left|f\left(\exp\left(\Phi(j_mz)\right)(x)\right)-\sum_{k=0}^n
	\frac{1}{k!}\left(\Phi\left((j_mz)^k\right)f\right)(x)\right|\\
	\leq\frac{1}{(n+1)!}\left\|\Phi
	\left((j_mz)^{n+1}\right)f\right\|_\infty.
      \end{multline}
    \end{enumerate}
  \end{myProp}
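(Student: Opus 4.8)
The plan is to reduce both parts to Taylor's theorem with integral remainder: part~(1) is that theorem applied to the scalar function $\phi(t):=f(\exp(tV)(x))$, and part~(2) is obtained by specialising part~(1) to the vector field $V=\Phi(j_mz)$ at time $t=1$ and then estimating the remainder term crudely.

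For part~(1), I would start from the defining property of the flow: $z_t:=\exp(tV)(x)$ is the solution of $\dot z_t=V(z_t)$, $z_0=x$, which exists for all $t$ and is jointly smooth in $(t,x)$ because $V\in C_b^\infty$. The chain rule gives $\phi'(t)=\sum_{j=1}^N V^j(z_t)\,(\partial f/\partial x_j)(z_t)=(Vf)(z_t)$, and a straightforward induction on $k$ yields $\phi^{(k)}(t)=(V^kf)(z_t)$, where $V^kf$ denotes the $k$-th order differential operator obtained by applying the vector field $V$ $k$ times to $f$; smoothness of $f$ and of the flow guarantees $\phi\in C^\infty$, so the induction is legitimate. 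Substituting $\phi^{(k)}(0)=(V^kf)(x)$ and $\phi^{(n+1)}(s)=(V^{n+1}f)(z_s)$ into the identity $\phi(t)=\sum_{k=0}^{n}\frac{t^k}{k!}\phi^{(k)}(0)+\int_0^t\frac{(t-s)^n}{n!}\phi^{(n+1)}(s)\,ds$ gives exactly \eqref{K_prop1_eq1}.

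For part~(2), first observe that $j_mz$ is a Lie polynomial, so $W:=\Phi(j_mz)$ is a genuine smooth vector field; moreover, since each $V_i\in C_b^\infty(\mathbb{R}^N;\mathbb{R}^N)$, any iterated Lie bracket of the $V_i$ is again in $C_b^\infty(\mathbb{R}^N;\mathbb{R}^N)$, so $W\in C_b^\infty(\mathbb{R}^N;\mathbb{R}^N)$ and part~(1) applies to it. Taking $V=W$ and $t=1$ in \eqref{K_prop1_eq1}, and using that $\Phi$ is an algebra homomorphism so that $W^k=\Phi(j_mz)^k=\Phi\big((j_mz)^k\big)$, the finite sum in \eqref{K_prop1_eq1} becomes precisely $\sum_{k=0}^{n}\frac{1}{k!}\big(\Phi((j_mz)^k)f\big)(x)$. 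It then remains only to bound the remainder: for every $x$,
\[
\left|\int_0^1\frac{(1-s)^n}{n!}\,(W^{n+1}f)(\exp(sW)(x))\,ds\right|\le\big\|W^{n+1}f\big\|_\infty\int_0^1\frac{(1-s)^n}{n!}\,ds=\frac{1}{(n+1)!}\,\big\|\Phi\big((j_mz)^{n+1}\big)f\big\|_\infty,
\]
and taking the supremum over $x\in\mathbb{R}^N$ yields \eqref{K_prop1_eq2}.

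I do not expect a serious obstacle: the statement is essentially Taylor's formula along an integral curve. The two points that need a little care are the induction identity $\phi^{(k)}(t)=(V^kf)(z_t)$ — where one must check that differentiating $(V^kf)(z_t)$ in $t$ reproduces exactly $(V^{k+1}f)(z_t)$, i.e.\ that the operator order increases by one at each step — and the remark that $\Phi$ applied to a Lie polynomial assembled from the $C_b^\infty$ vector fields $V_i$ is again bounded, which is what makes the flow $\exp(\Phi(j_mz))$ globally defined and the right-hand side of \eqref{K_prop1_eq2} finite.
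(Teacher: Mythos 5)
Your proposal is correct and follows essentially the same route as the paper: the paper likewise obtains \eqref{K_prop1_eq1} from $\frac{d}{dt}f(\exp(tV)(x))=(Vf)(\exp(tV)(x))$ via Taylor expansion with integral remainder (integration by parts), and derives \eqref{K_prop1_eq2} by specialising to $V=\Phi(j_mz)$, $t=1$ and bounding the remainder by $\frac{1}{(n+1)!}\|\Phi((j_mz)^{n+1})f\|_\infty$. Your added remarks (the induction $\phi^{(k)}(t)=(V^kf)(z_t)$ and the fact that $\Phi$ of the Lie polynomial $j_mz$ is a $C_b^\infty$ vector field, so the flow is globally defined) are exactly the details the paper leaves implicit.
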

  \begin{proof}
    Since we have 
    \begin{equation*}
      \frac{d}{dt}f\left(\exp(tV)(x)\right)=\left(Vf\right)
      \left(\exp\left(tV\right)(x)\right),
    \end{equation*}
    from the Taylor expansion, we obtain~\eqref{K_prop1_eq1} 
    by integration by parts and \eqref{K_prop1_eq2} can be derived 
    from~\eqref{K_prop1_eq1}.
    \qed
  \end{proof}
  
  \begin{myLem}\label{K_lem1}
    For all $n\geq 1$, there exists a constant $C_n>0$ such that 
    \begin{equation}\label{K_lem1_eq}
      \left\|\Phi(j_nz)f\right\|_\infty
      \leq C_n\left\|j_nz\right\|_2\left\|\grad(f)\right\|_{C^{n-1}}
    \end{equation}
    for all $z\in{\mathbb R}\langle\langle A\rangle\rangle$
    and $f\in C^\infty({\mathbb R}^N;{\mathbb R})$.
  \end{myLem}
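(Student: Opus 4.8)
The plan is to reduce the estimate to a pointwise bound on the action of the differential operator $\Phi(w)$ for a single word $w$ with $\|w\|\le n$, and then sum up using Cauchy--Schwarz. First I would note that since $j_nz=\sum_{\|w\|\le n}(z,w)\,w$ involves only finitely many words (there are finitely many $w\in A^*$ with $\|w\|\le n$), we have by linearity
\begin{equation*}
  \Phi(j_nz)f=\sum_{\|w\|\le n}(z,w)\,\Phi(w)f,
\end{equation*}
so it suffices to show that for each such word $w$, $\|\Phi(w)f\|_\infty\le c_w\|\grad(f)\|_{C^{n-1}}$ for some constant $c_w$ depending only on $w$ and on $\|V_i\|_{C^{n-1}}$ (which are finite since $V_i\in C^\infty_b$). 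Indeed, for $w=v_{i_1}\cdots v_{i_k}$ with $k=|w|\le\|w\|\le n$, the operator $\Phi(w)f=V_{i_1}\cdots V_{i_k}f$ is a sum of terms each of which is a product of components of the $V_{i_j}$ and their derivatives up to order $k-1$, multiplied by a first partial derivative of $f$ differentiated up to $k-1$ further times; every such term is therefore controlled by a constant (built from $\|V_{i_j}\|_{C^{k-1}}$, hence from $\|V_{i_j}\|_{C^{n-1}}$) times $\|\grad(f)\|_{C^{k-1}}\le\|\grad(f)\|_{C^{n-1}}$. This is the routine Leibniz-rule bookkeeping that I would not carry out in full, but it is the mechanism giving $c_w$.

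Next I would combine these word-by-word bounds. Writing $c=\max\{c_w:\|w\|\le n\}$ and letting $p_n=\card\{w\in A^*:\|w\|\le n\}$, Cauchy--Schwarz gives
\begin{equation*}
  \|\Phi(j_nz)f\|_\infty\le\sum_{\|w\|\le n}|(z,w)|\,c_w\|\grad(f)\|_{C^{n-1}}
  \le c\,\sqrt{p_n}\,\Bigl(\sum_{\|w\|\le n}(z,w)^2\Bigr)^{1/2}\|\grad(f)\|_{C^{n-1}}.
\end{equation*}
Since $\bigl(\sum_{\|w\|\le n}(z,w)^2\bigr)^{1/2}=\|j_nz\|_2$ by definition of the inner product and the norm $\|\cdot\|_2$, setting $C_n=c\sqrt{p_n}$ yields~\eqref{K_lem1_eq}. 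Note $C_n$ depends only on $n$ (through $p_n$ and through the maximal derivative order $n-1$) and on the fixed vector fields $V_0,\dots,V_d$, not on $z$ or $f$, as required; moreover the bound is meaningful for arbitrary $z\in{\mathbb R}\langle\langle A\rangle\rangle$ precisely because $j_n$ truncates to a non-commutative polynomial, so $\|j_nz\|_2<\infty$ automatically.

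The only genuinely substantive point — the ``hard part,'' such as it is — is making the per-word estimate $\|V_{i_1}\cdots V_{i_k}f\|_\infty\le c_w\|\grad(f)\|_{C^{k-1}}$ precise: one must check that no undifferentiated copy of $f$ survives (so that only $\grad(f)$ and its derivatives appear, not $f$ itself) and that the order of differentiation falling on $\grad(f)$ never exceeds $k-1\le n-1$. This follows by induction on $k$: applying one more vector field $V_{i_0}$ to an expression of the form $\sum(\text{derivatives of }V\text{'s})\cdot(\partial_{j}f$ differentiated $\le k-1$ times$)$ either differentiates one of the coefficient factors (raising its derivative order by one, still bounded by $\|V\|_{C^{k}}$) or differentiates the $\grad(f)$-factor once more (raising its order to $\le k$), and in all cases the number of surviving derivatives on $\grad(f)$ is at most the new word length minus one. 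Everything else is packaging.
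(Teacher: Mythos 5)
Your argument is correct and is essentially the paper's proof: both reduce by linearity to a per-word estimate showing that $\Phi(w)f$ involves only derivatives of $\grad(f)$ of order at most $|w|-1\le n-1$ with coefficients controlled by the (finite) $C^{n-1}$-norms of the vector fields $V_i$, and then sum over the finitely many words with $\|w\|\le n$, bounding the $\ell^1$-sum of the $|(z,w)|$ by a constant times $\|j_nz\|_2$. The paper organizes the per-word step by splitting $\Phi(w)$ into its homogeneous differential orders via the projections $p_i$ instead of your induction on word length, and uses the bound $\sum_{1\le\|w\|\le n}|\langle z,w\rangle|\le \card\left(\{w:1\le\|w\|\le n\}\right)\|j_nz\|_2$ in place of Cauchy--Schwarz, but these are only cosmetic differences (and your proof shares with the paper the implicit restriction to the words $w\neq 1$, i.e.\ the assumption that the constant term of $z$ plays no role, which holds in every application of the lemma).
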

  \begin{proof}
    Let $p_m$ be a map such that
    \begin{equation*}
      p_m\; :\;\sum_{|\alpha|=0}^\infty a_{\alpha}D^{\alpha}\longmapsto
      \sum_{|\alpha|=m}a_\alpha D^\alpha
    \end{equation*}
    where $\alpha$ is a multi-index, 
    $a_\alpha \in C^\infty_b({\mathbb R}^N; {\mathbb R})$, 
    and $D^\alpha =
    \frac{\partial^{\lvert \alpha\rvert}}{\partial
      x_1^{\alpha_1}\dots\partial x_N^{\alpha_N}}$.  
    Then we have 
    \begin{equation*}
      \Phi(w)=\sum_{i=1}^{|w|}p_i(\Phi(w)),
    \end{equation*}
    for $w\in A^*\setminus\{1\}$.  
    Since there exists a constant $C_{w,i}>0$ such that 
    \begin{equation*}
      \left\|p_i(\Phi(w))f\right\|_\infty
      \leq C_{w,i}\sup_{\substack{
	  \alpha\in \left({\mathbb Z}_{\geq 0}\right)^N \\
	  |\alpha|=i-1}}\left\|D^\alpha(\grad(f))\right\|_\infty,
    \end{equation*}
    we see that there exists a constant $C^\prime_n>0$ such that 
    \begin{equation*}\begin{split}
	\left\|\Phi(j_nz)f\right\|_\infty&
	\leq\sum_{\substack{w\in A^*\\ 1\leq\| w\|\leq n}}
	\left\|\Phi(w)f\right\|_\infty\left|\langle z,w\rangle\right|\\
	&\leq\sum_{\substack{w\in A^*\\ 1\leq\| w\|\leq n}}
	\sum_{i=1}^{|w|}C_{w,i}\left|\langle z, w\rangle\right|
	\sup_{\substack
	  {\alpha\in\left({\mathbb Z}_{\geq 0}\right)^N \\ |\alpha|=i-1}}
	\left\|D^{\alpha}(\grad(f))\right\|_\infty\\
	&\leq C_n^\prime\left\|j_nz\right\|_2
	\sup_{\substack{\alpha\in\left({\mathbb Z}_{\geq 0}\right)^N \\ 
	    |\alpha|\leq n-1}}\left\|D^\alpha (\grad(f))\right\|_\infty\\
	&\leq C_n\left\|j_nz\right\|_2\left\|\grad(f)\right\|_{C^{n-1}}
    \end{split}\end{equation*}
    where
    $C_n^\prime=\card\left(
    \left\{w\in A^*\,\vert\,1\leq\|w\|\leq n\right\}\right)
    \sup_{\substack{w\in A^*\\1\leq\|w\|\leq n}}
    (\sum_{i=1}^{|w|}C_{w,i})$.
    \qed
  \end{proof}
  
  For simplicity of notation, we let 
  $\Phi_s(y)$ denote $\Phi(\Psi_s(y))$ for an element 
  $y\in\mathcal{L}_{\mathbb R}((A))$ in the following part.
  \begin{myLem}\label{K_lem2}
    \begin{enumerate}[{\rm (1)}]
    \item There exists a constant $C_{m,1}>0$ 
      such that 
      \begin{multline}\label{K_lem2_eq1}
	\sup_{x\in{\mathbb R}^N}
	\left|f\left(\exp\left(\Phi_s\left(j_mz\right)\right)(x)\right)
	-\left(\Phi_s\left(j_m\exp\left(j_mz\right)\right)f\right)(x)
	\right|\\
	\leq C_{m,1}s^{(m+1)/2}
	\left(1+\left\|j_mz\right\|_2\right)^{m+1}
	\left\|\grad(f)\right\|_{C^{m(m+1)-1}}
      \end{multline}
      for $z\in\mathcal{L}_{\mathbb R}((A))$. 
    \item There exists a constant $C_{m,M}>0$ where $M\in{\mathbb Z}_{\geq 2}$ 
      such that 
      \begin{multline}\label{K_lem2_eq2}
	\sup_{x\in{\mathbb R}^N}
	\left|f\left(\exp\left(\Phi_s\left(
	j_m\left(\left(j_mz_M\right)\hausdorff
	\cdots\hausdorff\left(
	j_mz_1\right)\right)\right)\right)(x)\right)\right.\\
	-\left. \left(\Phi_s\left(j_m\exp\left(j_m\left(\left(j_mz_M\right)
	\hausdorff
	\cdots\hausdorff\left(j_mz_1\right)\right)\right)\right)f\right)(x)
	\right|\\
	\leq C_{m,M}s^{(m+1)/2}
	\left(1+\sum_{i=1}^M\left\|j_mz_i\right\|_2\right)^{m+1}
	\left\|\grad(f)\right\|_{C^{m(m+1)-1}}
      \end{multline}
      for $z_1, \dots, z_M\in\mathcal{L}_{\mathbb R}((A))$. 
    \end{enumerate}
  \end{myLem}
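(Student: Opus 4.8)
The plan is to establish part~(1) directly and then to reduce part~(2) to it. Three essentially algebraic facts will be used repeatedly. First, $\Phi$ and $\Psi_s$ are $\mathbb{R}$-algebra homomorphisms and $j_m$ commutes with $\Psi_s$, so $\Phi_s(y)^k=\Phi_s(y^k)$ and $j_m(\Psi_s y)=\Psi_s(j_m y)$. Second, $\|\cdot\|$ is additive under concatenation, so every word occurring in $(j_mz)^k$ has $\|\cdot\|$ between $k$ and $mk$ and word-length at most $mk$; in particular $j_m\exp(j_mz)=\sum_{k=0}^{m}\tfrac1{k!}j_m((j_mz)^k)$. Third, $\|PQ\|_2\le c_n\|P\|_2\|Q\|_2$ holds for non-commutative polynomials all of whose words have length $\le n$, by Cauchy--Schwarz over the $\le n+1$ factorizations of each word. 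Since $\Psi_s$ multiplies the $\|\cdot\|$-homogeneous component of degree $\ell$ by $s^{\ell/2}$, a polynomial supported on words of $\|\cdot\|\ge m+1$ is shrunk by $\Psi_s$ by a factor $\le s^{(m+1)/2}$ whenever $s\in(0,1]$; this is what will produce the factor $s^{(m+1)/2}$ in the conclusion.

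For part~(1) I would split the quantity inside $\sup_x$ as $A(x)+B(x)$, where
\[
A(x)=f\bigl(\exp(\Phi_s(j_mz))(x)\bigr)-\sum_{k=0}^{m}\tfrac1{k!}\bigl(\Phi_s(j_mz)^k f\bigr)(x),\qquad
B(x)=\sum_{k=0}^{m}\tfrac1{k!}\bigl(\Phi_s(j_mz)^k f\bigr)(x)-\bigl(\Phi_s(j_m\exp(j_mz))f\bigr)(x).
\]
Here $A$ is a Taylor remainder and $B$ a truncation error. For $A$: writing $\Phi_s(j_mz)=\Phi(j_m(\Psi_sz))$ with $\Psi_sz\in\mathcal{L}_{\mathbb R}((A))$ and applying Proposition~\ref{K_prop1}(2) with $n=m$ gives $\|A\|_\infty\le\tfrac1{(m+1)!}\|\Phi((j_m\Psi_sz)^{m+1})f\|_\infty$; since $(j_m\Psi_sz)^{m+1}=\Psi_s((j_mz)^{m+1})$ is supported on words of $\|\cdot\|\ge m+1$ and length $\le m(m+1)$, the facts above bound its $\|\cdot\|_2$-norm by $s^{(m+1)/2}c\,\|j_mz\|_2^{m+1}$, so Lemma~\ref{K_lem1} with $n=m(m+1)$ yields $\|A\|_\infty\le Cs^{(m+1)/2}\|j_mz\|_2^{m+1}\|\grad(f)\|_{C^{m(m+1)-1}}$. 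For $B$: using $j_m\exp(j_mz)=\sum_{k=0}^m\tfrac1{k!}j_m((j_mz)^k)$ one gets $B(x)=\sum_{k=2}^{m}\tfrac1{k!}\bigl(\Phi_s\bigl((\mathrm{Id}-j_m)((j_mz)^k)\bigr)f\bigr)(x)$, and since $(\mathrm{Id}-j_m)((j_mz)^k)$ is supported on words of $\|\cdot\|\ge m+1$ and length $\le m^2$, the same two steps give $\|B\|_\infty\le Cs^{(m+1)/2}\bigl(\sum_{k=2}^{m}\|j_mz\|_2^k\bigr)\|\grad(f)\|_{C^{m(m+1)-1}}$. Adding the two bounds and using $t^{m+1}+\sum_{k=2}^{m}t^k\le(m+1)(1+t)^{m+1}$ proves~(1).

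For part~(2) put $\zeta:=j_m\bigl((j_mz_M)\hausdorff\cdots\hausdorff(j_mz_1)\bigr)$. The $\hausdorff$-product of Lie series is again a Lie series by the Baker--Campbell--Hausdorff formula, and $j_m$ preserves $\mathcal{L}_{\mathbb R}((A))$ because the free Lie algebra is graded by $\|\cdot\|$; hence $\zeta\in\mathcal{L}_{\mathbb R}((A))$ and $j_m\zeta=\zeta$, so the left-hand side of~(2) is literally the left-hand side of~(1) with $z$ replaced by $\zeta$. Part~(1) then bounds it by $Cs^{(m+1)/2}(1+\|\zeta\|_2)^{m+1}\|\grad(f)\|_{C^{m(m+1)-1}}$, and it remains to estimate $\|\zeta\|_2$: the Baker--Campbell--Hausdorff series exhibits $\zeta$ as a finite linear combination of iterated brackets of the $j_mz_i$, a bracket of length $\ell$ has $\|\cdot\|\ge\ell$ so only those with $\ell\le m$ survive $j_m$, and iterating $\|[P,Q]\|_2\le c\|P\|_2\|Q\|_2$ together with $\prod_j\|j_mz_{i_j}\|_2\le\bigl(\sum_i\|j_mz_i\|_2\bigr)^{\ell}$ bounds $\|\zeta\|_2$ by $C_{m,M}$ times a fixed power of $1+\sum_{i=1}^M\|j_mz_i\|_2$. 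Feeding this into the bound from~(1) gives~(2).

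The argument is largely bookkeeping, and the point that demands care is to control, for every monomial surviving the successive truncations, two competing quantities at once: its $\|\cdot\|$-degree, which must stay $\ge m+1$ so that $\Psi_s$ contributes $s^{(m+1)/2}$, and its word-length, which must stay $\le m(m+1)$ so that Lemma~\ref{K_lem1} never requires more than $\|\grad(f)\|_{C^{m(m+1)-1}}$. Accordingly the main obstacle is organizational rather than conceptual; the only genuinely nontrivial estimate is the Baker--Campbell--Hausdorff bound on $\|\zeta\|_2$ in part~(2).
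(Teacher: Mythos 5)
Your argument for part (1) is, step for step, the paper's own proof: the paper also splits the error into the Taylor remainder controlled by Proposition~\ref{K_prop1}(2) (using $(j_mz)^{m+1}=(j_{m(m+1)}-j_m)\bigl((j_mz)^{m+1}\bigr)$) plus the truncation terms $(j_{m(m+1)}-j_m)\bigl((j_mz)^{k}\bigr)$ for $2\le k\le m$, estimates everything through Lemma~\ref{K_lem1} with $n=m(m+1)$, and extracts the factor $s^{(m+1)/2}$ from exactly your observation that $\Psi_s$ contracts words of $\|\cdot\|$-degree at least $m+1$ by $s^{(m+1)/2}$ when $s\le 1$; and part (2) is obtained in the paper, as in your proposal, simply by substituting $\zeta=j_m\bigl((j_mz_M)\hausdorff\cdots\hausdorff(j_mz_1)\bigr)$ for $z$ in that computation, so the routes coincide.

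The one place where your write-up does not deliver the statement as printed is the exponent in \eqref{K_lem2_eq2}. Your bound $\|\zeta\|_2\le C_{m,M}\bigl(1+\sum_i\|j_mz_i\|_2\bigr)^{m}$ (only brackets of at most $m$ factors survive $j_m$) fed into part (1) yields the exponent $m(m+1)$, not $m+1$; your phrase ``a fixed power'' tacitly concedes this. You should not feel too guilty: the paper's own one-line justification, the displayed estimate $\sum_{k=2}^{m+1}\|(j_{m(m+1)}-j_m)\zeta^{k}\|_2\le C_{m,M}(1+\sum_i\|j_mz_i\|_2)^{m+1}$, is asserted without proof and, read literally, fails for the same reason --- e.g.\ for $m=2$, $M=2$, $z_1=\lambda v_1$, $z_2=\lambda v_2$ the degree-two part of $\zeta$ is $\tfrac{\lambda^2}{2}[v_2,v_1]$, so the $k=3$ summand is of order $\lambda^{6}$ while the right-hand side is of order $\lambda^{3}$. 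The larger exponent is harmless for every subsequent use of the lemma (Lemmas~\ref{K_lem4} and~\ref{K_lem5} and Theorem~\ref{contp_krk_main_thm1} only need $s^{(m+1)/2}$ times some fixed polynomial in the $\|j_mZ_i\|_2$, and the $Z_i$ actually used have Gaussian coefficients, hence moments of all orders under \eqref{Z_condi1}--\eqref{Z_condi2}), but you should say explicitly which exponent your argument produces, and either prove the lemma with that weaker exponent and note it suffices downstream, or supply the missing estimate if you believe the exponent $m+1$ is attainable.
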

  
  \begin{proof}
    From the fact that for $z\in\mathcal{L}_{\mathbb R}((A))$
    \begin{equation*}
      j_m\left(\exp\left(j_mz\right)\right)=\sum_{k=0}^{m}
      \frac{1}{k !}\left(j_mz\right)^{k}-\sum_{k=2}^{m}
      \frac{1}{k !}(j_{m(m+1)}-j_m)
      \left(\left(j_mz\right)^{k}\right) 
    \end{equation*}
    and from \eqref{K_prop1_eq2} in Proposition~\ref{K_prop1}, we see that 
    \begin{multline}\label{K_lem2_2}
      \left|f\left(\exp\left(\Phi(j_mz)\right)(x)\right)
      -\left(\Phi\left(j_m\left(\exp(j_mz)\right)\right)f\right)(x)\right|\\
      \leq \frac{1}{(m+1)!}\left\|\Phi\left(\left(j_mz\right)^{m+1}\right)f
      \right\|_\infty
      +\left|\sum_{k=2}^{m}\frac{1}{k !}
      \left(\Phi\left((j_{m(m+1)}-j_m)
      \left((j_mz)^{k}\right)\right)
      f\right)(x)\right|.
    \end{multline}
    Since we have 
    \begin{equation*}
      \left(j_mz\right)^{m+1}=\left(j_{m(m+1)}-j_m\right)
      \left(j_mz\right)^{m+1}, 
    \end{equation*}  
    the followings can be derived by applying Lemma~\ref{K_lem1}:
    \begin{equation}\label{K_lem2_3}\begin{split}
	&\left|f\left(\exp\left(\Phi\left(j_mz\right)\right)(x)\right)
	-\left(\Phi\left(j_m\left(\exp\left(j_mz\right)\right)\right)f
	\right)(x)\right|\\
	&\qquad\qquad\qquad\leq \sum_{k=2}^{m+1}\frac{1}{k !}
	\left\|\Phi\left(\left(j_{m(m+1)}-j_m\right)\left(
	\left(j_mz\right)^{k}\right)\right)f\right\|_\infty\\
	&\qquad\qquad\qquad\leq C_m\sum_{k=2}^{m+1}
	\left\|\left(j_{m(m+1)}-j_m\right)
	\left(j_mz\right)^k\right\|_2
	\left\|\grad(f)\right\|_{C^{m(m+1)-1}}\\
	&\qquad\qquad\qquad\leq C_{m,1}\left(
	1+\left\|j_mz\right\|_2\right)^{m+1}
	\|\grad(f)\|_{C^{m(m+1)-1}}
    \end{split}\end{equation}
    where $C_m$ and $C_{m,1}$ are positive constants.  
    Thus~\eqref{K_lem2_eq1} is proved.\\
    Taking $(j_mz_M)\hausdorff\cdots\hausdorff (j_mz_1)$ 
    as $z$ above and evaluating by  
    \begin{equation*}
      \sum_{k=2}^{m+1}\left\|\left(j_{m(m+1)}-j_m\right)
      \left(j_m\left(\left(j_mz_M\right)\hausdorff\cdots\hausdorff 
      \left(j_mz_1\right)\right)\right)^k\right\|_2
      \leq C_{m,M}
      \left(1+\sum_{i=1}^M\left\|j_mz_i\right\|_2\right)^{m+1},
    \end{equation*}
    we obtain~\eqref{K_lem2_eq2}.
    \qed
  \end{proof}
  \begin{myLem}\label{K_lem3}
    There exists a constant $C_{m,M}>0$ such that 
    \begin{multline}\label{K_lem3_eq}
      \sup_{x\in{\mathbb R}^N}
      \left|f\left(\exp\left(\Phi_s\left(j_mz_1\right)\right)\circ\cdots
      \circ\exp\left(\Phi_s\left(j_mz_M\right)\right)(x)\right)\right.\\
      -\left(
      \Phi_s\left(j_m\exp\left(j_m\left(
      \left(j_mz_M\right)\hausdorff\cdots\hausdorff\left(j_mz_1\right)\right)
      \right)\right)f\right)(x)\big|\\
      \leq C_{m,M}s^{(m+1)/2}\sum_{i=1}^{M}
      \left(1+\left\|j_mz_i\right\|_2\right)^{m+1}
      \left\|\grad(f)\right\|_{C^{m(m+M)-1}}
    \end{multline}
    for $z_1, \dots, z_M\in\mathcal{L}_{\mathbb R}((A))$. Here 
    $C_{m,M}$ depends on $m$ and $M$.
  \end{myLem}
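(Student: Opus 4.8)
\emph{The plan} is to reduce the estimate to one nested use of the flow Taylor formula~\eqref{K_prop1_eq1} of Proposition~\ref{K_prop1}, organised so that every remainder is the supremum norm of a \emph{pure differential operator} applied to $f$, with no composition of flows surviving. Write $\phi_j=\exp(\Phi_s(j_mz_j))$ for the time-$1$ flow. Applying~\eqref{K_prop1_eq1} with $t=1$ first to $\phi_1$ at the base point $\phi_2\circ\cdots\circ\phi_M(x)$, then to $\phi_2$ at the base point $\phi_3\circ\cdots\circ\phi_M(x)$, and so on down to $\phi_M$ at $x$, and using that $\Phi$ and $\Psi_s$ are algebra homomorphisms so that $\Phi_s(j_mz_M)^{k_M}\cdots\Phi_s(j_mz_1)^{k_1}=\Phi_s\bigl((j_mz_M)^{k_M}\cdots(j_mz_1)^{k_1}\bigr)$, one obtains $f(\phi_1\circ\cdots\circ\phi_M(x))=\bigl(\Phi_s(P)f\bigr)(x)+\sum_{j=1}^{M}R_j(x)$, where $P=\sum\tfrac1{k_1!\cdots k_M!}(j_mz_M)^{k_M}\cdots(j_mz_1)^{k_1}$ is the finite polynomial collecting the retained Taylor coefficients. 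Exactly as~\eqref{K_prop1_eq2} was deduced from~\eqref{K_prop1_eq1}, bounding the integral remainder by the supremum over its (flow-dependent) base point makes the $j$-th remainder satisfy $\sup_x|R_j(x)|\le\tfrac1{(n_j+1)!}\bigl\|\Phi_s(j_mz_j)^{\,n_j+1}\Phi_s(j_mz_{j-1})^{\,k_{j-1}}\cdots\Phi_s(j_mz_1)^{\,k_1}f\bigr\|_\infty$ for each retained tuple. The essential gain is that nothing like $\bigl\|\exp(\Phi_s(j_mz_i))\bigr\|_{C^n}$, which is exponential in $\|j_mz_i\|_2$, is ever generated.

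\emph{Choice of truncation orders and the error bound.} When peeling $\phi_j$ I truncate its expansion at order $n_j:=\max\{0,\,m-(k_1+\cdots+k_{j-1})\}$; this keeps in $P$ precisely the tuples with $k_1+\cdots+k_j\le m$ for every $j$, and guarantees that in each remainder the total exponent is pinned at $(n_j+1)+k_{j-1}+\cdots+k_1=m+1$. Since each $j_mz_i$ is a Lie polynomial of $\|\cdot\|$-degree in $[1,m]$, the operator appearing in $R_j$ is $\Phi(\Psi_s(Q))$ for a $Q\in{\mathbb R}\langle A\rangle$ whose homogeneous components have $\|\cdot\|$-degree between $m+1$ and $m(m+1)\le m(m+M)$; hence $\Psi_s$ multiplies each component by $s^{d/2}\le s^{(m+1)/2}$ (recall $s\le1$), and Lemma~\ref{K_lem1} at degree $m(m+M)$, together with sub-multiplicativity of $\|\cdot\|_2$ on elements of bounded degree, gives $\sup_x|R_j(x)|\le C\,s^{(m+1)/2}\,\|j_mz_j\|_2^{\,n_j+1}\prod_{i<j}\|j_mz_i\|_2^{\,k_i}\,\|\grad(f)\|_{C^{m(m+M)-1}}$. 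The monomial in the $\|j_mz_i\|_2$ has total degree exactly $m+1$, so the arithmetic--geometric mean inequality bounds it by $\sum_{i=1}^M\|j_mz_i\|_2^{\,m+1}\le\sum_{i=1}^M(1+\|j_mz_i\|_2)^{m+1}$. As the number of retained tuples, hence of remainder terms, is bounded in terms of $m$ and $M$ only, summing the $R_j$ yields the bound claimed in~\eqref{K_lem3_eq}.

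\emph{Identifying the main term with the stated operator.} Put $W:=(j_mz_M)\hausdorff\cdots\hausdorff(j_mz_1)=\log\bigl(\exp(j_mz_M)\cdots\exp(j_mz_1)\bigr)\in\mathcal{L}_{\mathbb R}((A))$, by the Baker--Campbell--Hausdorff formula. Since $(W,1)=0$, truncating at $\|\cdot\|$-degree $m$ sees only degree-$\le m$ data, so $j_m\exp(j_mW)=j_m\exp(W)=j_m\bigl(\exp(j_mz_M)\cdots\exp(j_mz_1)\bigr)$; and every monomial $(j_mz_M)^{k_M}\cdots(j_mz_1)^{k_1}$ with some $k_1+\cdots+k_j>m$ has $\|\cdot\|$-degree $\ge k_1+\cdots+k_M>m$, so it is annihilated by $j_m$. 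Hence the operator in~\eqref{K_lem3_eq} equals $\Phi_s(j_mP)$, and the difference between the main term and it is $\Phi_s\bigl((\mathrm{Id}-j_m)P\bigr)f$, which involves only homogeneous components of $P$ of $\|\cdot\|$-degree $>m$ and therefore carries a factor $s^{(m+1)/2}$ after $\Psi_s$. Estimating this discrepancy exactly as above (Lemma~\ref{K_lem1}, sub-multiplicativity, and the arithmetic--geometric mean inequality applied to the resulting monomials, which now have degree $\le m$) produces another term of the required form, completing the proof.

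\emph{Main obstacle.} The one delicate point is obtaining a \emph{sum} $\sum_i(1+\|j_mz_i\|_2)^{m+1}$ rather than a product of such factors, and this is what forces both design choices above: keeping the remainders free of any composition with flows, so that no $\exp$-of-a-vector-field norm (exponential in $\|j_mz_i\|_2$) ever appears, and calibrating the truncation order $n_j$ to the grading budget already spent, so that every error monomial has $\|\cdot\|$-degree exactly $m+1$ and the arithmetic--geometric mean inequality collapses the product into a sum. Everything else---the homomorphism properties of $\Phi$ and $\Psi_s$, the BCH identity, sub-multiplicativity of $\|\cdot\|_2$ on bounded-degree elements, and the routine degree/scaling bookkeeping that controls the number of terms and the Sobolev order $m(m+M)-1$---is standard.
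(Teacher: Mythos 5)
Your proof is correct, but it takes a genuinely different route from the paper's. The paper proves Lemma~\ref{K_lem3} by induction on $M$: the base case $M=1$ is exactly \eqref{K_lem2_eq1}, and the inductive step splits the left-hand side for $M+1$ into (i) the $M$-fold estimate evaluated at the shifted base point $\exp\left(\Phi_s\left(j_mz_{M+1}\right)\right)(x)$, controlled by the induction hypothesis, and (ii) a one-flow discrepancy handled by \eqref{K_lem2_eq1} applied with $\Phi_s\left(j_m\exp\left(j_m\left(\left(j_mz_M\right)\hausdorff\cdots\hausdorff\left(j_mz_1\right)\right)\right)\right)f$ in place of $f$; the sum $\sum_i\left(1+\left\|j_mz_i\right\|_2\right)^{m+1}$ then accumulates automatically, one term per inductive step, and the derivative order grows to $m(m+M)-1$ because each step costs a further application of Lemma~\ref{K_lem1} to an operator-modified test function. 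You instead carry out all $M$ nested Taylor expansions (Proposition~\ref{K_prop1}) in a single pass, identify the retained polynomial with $j_m\exp\left(j_m\left(\left(j_mz_M\right)\hausdorff\cdots\hausdorff\left(j_mz_1\right)\right)\right)$ via BCH and the homomorphism property of $\Phi\circ\Psi_s$, and turn the degree-$(m+1)$ remainder monomials into the required sum by weighted AM--GM after calibrating the truncation orders $n_j$ to the degree budget already spent. Both routes keep every remainder as a sup norm of a pure differential operator applied to $f$ (the paper's induction hypothesis is a uniform bound, so no flow norms arise there either); yours trades the paper's modularity (reuse of Lemma~\ref{K_lem2} plus a short induction) for heavier combinatorial bookkeeping, and in exchange yields the slightly sharper derivative count $m(m+1)-1$ in place of $m(m+M)-1$, which of course implies the stated inequality. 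One minor point: for prefixes with $k_1+\cdots+k_{j-1}=m$ you invoke \eqref{K_prop1_eq1} with $n_j=0$, which is not literally covered by the proposition's hypothesis $n\geq 1$, but is just the fundamental theorem of calculus, so nothing is lost.
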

  \begin{proof}
    We prove the lemma by induction on $M$.
    When $M=1$, \eqref{K_lem2_eq1} and~\eqref{K_lem3_eq} are equivalent. 
    Assume that~\eqref{K_lem3_eq} holds for $M$.
    Splitting the left-hand side of~\eqref{K_lem3_eq} for $M+1$ as  
    \begin{equation*}\begin{split}
	&\left|f\left(\exp\left(\Phi_s\left(j_mz_1\right)\right)\circ\cdots
	\circ\exp\left(\Phi_s\left(j_mz_{M+1}\right)\right)(x)\right)\right.\\
	&\quad\quad\quad\quad\quad -\left.\left(
	\Phi_s\left(j_m\exp\left(j_m\left(
	\left(j_mz_{M+1}\right)\hausdorff\cdots
	\hausdorff\left(j_mz_1\right)\right)\right)\right)f
	\right)(x)\right|\\
	&\leq\left|f\left(\exp\left(\Phi_s\left(j_mz_1\right)\right)\circ
	\cdots\circ\exp\left(\Phi_s\left(j_mz_{M+1}\right)\right)(x)\right)
	\right. \\
	&\qquad\qquad\quad\left.
	-\left(\Phi_s\left(j_m\exp\left(j_m\left(\left(j_mz_M\right)\hausdorff
	\cdots\hausdorff\left(j_mz_1\right)\right)\right)\right)f\right)
	\left(\exp\left(\Phi_s\left(j_mz_{M+1}\right)\right)(x)\right)\right|\\
	&\quad +\left|
	\left(
	\Phi_s\left(j_m\exp\left(j_m\left(\left(j_mz_M\right)\hausdorff
	\cdots\hausdorff\left(j_mz_1\right)\right)\right)\right)f
	\right)
	\left(\exp\left(\Phi_s\left(j_mz_{M+1}\right)\right)(x)\right)
	\right. \\
	&\qquad\qquad\quad\left.
	-\left(\Phi_s\left(j_m\exp\left(j_m\left(\left(j_mz_{M+1}\right)
	\hausdorff
	\cdots\hausdorff\left(
	j_mz_{1}\right)\right)\right)\right)f\right)(x)\right|,
    \end{split}\end{equation*}
    we can apply the induction hypothesis and~\eqref{K_lem2_eq1} with 
    $\Phi_s\left(j_m\exp\left(\left(j_mz_M\right)\hausdorff
    \cdots\hausdorff\left(j_mz_1\right)\right)\right)f$  
    instead of $f$ obtaining
    \begin{multline*}
      \sup_{x\in{\mathbb R}^N}
      \left|f\left(\exp\left(\Phi_s\left(j_mz_1\right)\right)\circ\cdots
      \circ\exp\left(\Phi_s\left(j_mz_{M+1}\right)\right)(x)\right)\right.\\
      -\left(
      \Phi_s\left(j_m\exp\left(j_m\left(\left(j_mz_{M+1}\right)\hausdorff\cdots
      \hausdorff\left(j_mz_1\right)\right)\right)\right)f
      \right)(x)\big|\\
      \leq C_1s^{(m+1)/2}\left(\sum_{i=1}^M
      \left(1+\left\|j_mz_i\right\|_2\right)^{m+1}
      +\left(1+\left\|j_mz_{M+1}\right\|_2\right)^{m+1}\right)
      \left\|\grad(f)\right\|_{C^{m(m+M+1)-1}},
    \end{multline*}
    where $C_1>0$ is a constant depending on $m$ and $M$.  
    Hence, \eqref{K_lem3_eq} is proved.
    \qed
  \end{proof}
  From Lemma~\ref{K_lem2} and~\ref{K_lem3}, we have the following result. 
  \begin{myLem}\label{K_lem4}
    For all $m\geq 1$, there exists a constant $C_{m,M}>0$ such that 
    \begin{multline}\label{K_lem4_eq}
      \sup_{x\in{\mathbb R}^N}\left|f\left(
      \exp\left(\Phi_s\left(j_mz_1\right)\right)\circ\cdots
      \circ\exp\left(\Phi_s\left(j_mz_M\right)\right)(x)
      \right)\right.\\
      -f\left(\exp\left(\Phi_s\left(j_m\left((j_mz_M)\hausdorff\cdots
      \hausdorff(j_mz_1)\right)\right)\right)(x)\right)\big|\\
      \leq C_{m,M}s^{(m+1)/2}
      \sum_{i=1}^M\left(1+\left\|j_mz_i\right\|_2\right)^{m+1}
      \left\|\grad(f)\right\|_{C^{m(m+M)-1}}
    \end{multline}
    for all $s\in (0, 1]$, $z_1,\dots, z_M\in\mathcal{L}_{\mathbb R}((A))$, 
      and $f\in C^\infty ({\mathbb R}^N; {\mathbb R})$. 
  \end{myLem}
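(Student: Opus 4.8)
The statement follows at once by combining Lemma~\ref{K_lem2}(2) and Lemma~\ref{K_lem3} through the triangle inequality, so the argument is short; I would treat $M\geq 2$, the case $M=1$ being trivial since then the two terms on the left of~\eqref{K_lem4_eq} coincide (as $j_m\circ j_m=j_m$ and the Hausdorff product of a single element is that element).

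First I would introduce, for each fixed $x\in{\mathbb R}^N$, the common intermediate term
\begin{equation*}
T_s(x):=\left(\Phi_s\left(j_m\exp\left(j_m\left(\left(j_mz_M\right)\hausdorff\cdots\hausdorff\left(j_mz_1\right)\right)\right)\right)f\right)(x),
\end{equation*}
which is exactly the operator-side quantity appearing in the left-hand sides of both~\eqref{K_lem2_eq2} and~\eqref{K_lem3_eq}. By the triangle inequality, the quantity inside $\sup_{x}|\cdot|$ on the left of~\eqref{K_lem4_eq} is bounded by the sum of $\bigl|f\bigl(\exp(\Phi_s(j_mz_1))\circ\cdots\circ\exp(\Phi_s(j_mz_M))(x)\bigr)-T_s(x)\bigr|$ and $\bigl|T_s(x)-f\bigl(\exp(\Phi_s(j_m((j_mz_M)\hausdorff\cdots\hausdorff(j_mz_1))))(x)\bigr)\bigr|$. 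Taking $\sup_{x\in{\mathbb R}^N}$ and using subadditivity of the supremum, the first piece is controlled by the right-hand side of~\eqref{K_lem3_eq} and the second by the right-hand side of~\eqref{K_lem2_eq2}.

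Second, I would put the two resulting estimates into the single form claimed in~\eqref{K_lem4_eq}. The bound coming from Lemma~\ref{K_lem3} is already of that shape. For the one from Lemma~\ref{K_lem2}(2) I would invoke two elementary facts: the $C^n$-norm is nondecreasing in $n$, whence $\|\grad(f)\|_{C^{m(m+1)-1}}\leq\|\grad(f)\|_{C^{m(m+M)-1}}$ for $M\geq 1$; and, writing $a_i:=\|j_mz_i\|_2\geq 0$, the inequality $1+\sum_{i=1}^M a_i\leq\sum_{i=1}^M(1+a_i)$ together with convexity of $t\mapsto t^{m+1}$ on $[0,\infty)$ gives $\bigl(1+\sum_{i=1}^M a_i\bigr)^{m+1}\leq M^m\sum_{i=1}^M(1+a_i)^{m+1}$. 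Adding the two majorizations and enlarging the constant $C_{m,M}$ accordingly yields~\eqref{K_lem4_eq} for all $s\in(0,1]$, $z_1,\dots,z_M\in\mathcal{L}_{\mathbb R}((A))$, and $f\in C^\infty({\mathbb R}^N;{\mathbb R})$.

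Since Lemmas~\ref{K_lem2} and~\ref{K_lem3} are already in hand, there is no genuine obstacle here. The only points that need a moment's attention are checking that the intermediate operator $T_s(x)$ is literally the same object in the two estimates being combined — it is, as in both it is $j_m\exp$ applied to $j_m$ of the identical Hausdorff product $(j_mz_M)\hausdorff\cdots\hausdorff(j_mz_1)$ — and carrying out the routine absorption of $\bigl(1+\sum_i\|j_mz_i\|_2\bigr)^{m+1}$ into a constant times $\sum_i(1+\|j_mz_i\|_2)^{m+1}$.
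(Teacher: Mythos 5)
Your proposal is correct and follows exactly the route the paper intends: the paper states Lemma~\ref{K_lem4} as an immediate consequence of Lemma~\ref{K_lem2} and Lemma~\ref{K_lem3}, i.e.\ the triangle inequality through the intermediate operator term $\left(\Phi_s\left(j_m\exp\left(j_m\left(\left(j_mz_M\right)\hausdorff\cdots\hausdorff\left(j_mz_1\right)\right)\right)\right)f\right)(x)$, which is precisely your $T_s(x)$. Your additional bookkeeping (monotonicity of the $C^n$-norms and absorbing $\bigl(1+\sum_i\|j_mz_i\|_2\bigr)^{m+1}$ into $M^m\sum_i\left(1+\|j_mz_i\|_2\right)^{m+1}$) is exactly the routine detail the paper leaves implicit, so nothing further is needed.
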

  
  \begin{myLem}\label{K_lem5}
    Let $Z_1,\dots, Z_M$ be $\mathcal{L}_{\mathbb R}((A))$-valued
    random variables such that for $m\geq 1$,
    $E\left[\left\|j_mZ_i\right\|_2\right]<\infty$ for $i=1,\dots,M$.
    Then, for $p\in[1, \infty)$ there exists a constant 
      $C_{m,M}>0$ such that 
      \begin{multline}\label{K_lem5_eq}
	\left\|\sup_{x\in{\mathbb R}^N}\left|
	\exp\left(\Phi_s\left(j_mZ_1\right)\right)\circ\cdots\circ
	\exp\left(\Phi_s\left(j_mZ_M\right)\right)(x)\right.\right.\\
	-\exp\left(\Phi_s\left(j_m\left((j_mZ_M)\hausdorff
	\cdots\hausdorff (j_mZ_1)\right)\right)\right)(x)\big|\bigg\|_{L^p}
	\leq C_{m,M}s^{(m+1)/2}
      \end{multline}
      for any $s\in (0, 1]$.
  \end{myLem}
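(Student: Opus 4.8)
The plan is to deduce this estimate from the scalar estimate of Lemma~\ref{K_lem4} by testing against the $N$ coordinate functions and then taking $L^p$-norms. First I would fix $\omega$ and apply Lemma~\ref{K_lem4} with $z_i=j_mZ_i(\omega)$ and with $f$ equal to the $k$-th coordinate projection $\pi_k(y)=y_k$, for each $k=1,\dots,N$. Since $\grad(\pi_k)\equiv e_k$ is constant, $\|\grad(\pi_k)\|_{C^{m(m+M)-1}}=1$, so Lemma~\ref{K_lem4} bounds the $k$-th coordinate of the difference appearing in~\eqref{K_lem5_eq} by $C_{m,M}\,s^{(m+1)/2}\sum_{i=1}^M(1+\|j_mZ_i\|_2)^{m+1}$, uniformly in $x$ and $\omega$. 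Combining the $N$ coordinate bounds (via $|v|\le\sqrt N\,\max_{k}|v_k|$) turns this into a \emph{pathwise} bound: for every $s\in(0,1]$ the inner $\sup_x$-difference in~\eqref{K_lem5_eq} is at most $\sqrt N\,C_{m,M}\,s^{(m+1)/2}\sum_{i=1}^M(1+\|j_mZ_i\|_2)^{m+1}$. That inner expression is a bona fide random variable, being a supremum over $x$ of a jointly measurable integrand that is continuous in $x$, hence a supremum over a countable dense set.

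It then only remains to take $L^p$-norms, for which Minkowski's inequality gives $\bigl\|\sum_{i=1}^M(1+\|j_mZ_i\|_2)^{m+1}\bigr\|_{L^p}\le\sum_{i=1}^M\bigl\|1+\|j_mZ_i\|_2\bigr\|_{L^{p(m+1)}}^{m+1}$, a finite $s$-independent quantity that is absorbed into the constant; this yields~\eqref{K_lem5_eq}. One routine preliminary should be settled first: that $j_m$ and $\Psi_s$ send Lie series to Lie polynomials, so that $\Phi_s(j_mZ_i)$ and $\Phi_s\bigl(j_m((j_mZ_M)\hausdorff\cdots\hausdorff(j_mZ_1))\bigr)$ are honest elements of $C_b^\infty(\mathbb{R}^N;\mathbb{R}^N)$ and all the flows in~\eqref{K_lem5_eq} make sense. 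This holds because $j_m$ simply discards the multi-homogeneous components with $\|\cdot\|>m$ (each component being a Lie polynomial), $\Psi_s$ merely rescales the components, and the $\hausdorff$-combination of Lie series is again a Lie series by Baker--Campbell--Hausdorff.

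The only genuine difficulty is the integrability invoked in the last step: for the constant to depend on the data alone one needs $E\bigl[\bigl(\sum_{i=1}^M(1+\|j_mZ_i\|_2)^{m+1}\bigr)^p\bigr]<\infty$, i.e.\ control of the $L^{p(m+1)}$-moments of each $\|j_mZ_i\|_2$ and not merely of its first moment; I would read this into the integrability hypothesis on the $Z_i$. In the concrete construction used for Corollary~\ref{contp_krk_main_cor}, where the $Z_i$ are polynomials in centered Gaussian random variables, $\|j_mZ_i\|_2$ automatically has moments of every order, so this bound is immediate. Apart from this point the argument contains nothing beyond Lemma~\ref{K_lem4} and the measurability check.
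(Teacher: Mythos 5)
Your proposal is essentially the paper's own proof: the paper likewise applies Lemma~\ref{K_lem4} pathwise to the coordinate functions $f\left(\left(x^1,\dots,x^N\right)\right)=x^i$, for which $\left\|\grad(f)\right\|_{C^{m(m+M)-1}}=1$, and then takes $L^p$-norms. The moment issue you flag (needing $L^{p(m+1)}$-moments of $\left\|j_mZ_i\right\|_2$ rather than only the first moment stated in the hypothesis) is a genuine looseness that the paper passes over silently as well; in the intended application the Gaussian construction of the $Z_i$ supplies moments of every order, so it is harmless there.
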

  
  \begin{proof}
    If for $i\in\{1,\dots, N\}$, $f((x^1,\dots, x^N))=x^i$, 
    then $\left\|\grad(f)\right\|_{C^{m(m+M)-1}}=1$ for all $m\geq 1$. 
    Therefore, applying Lemma~\ref{K_lem4} for this $f$, 
    we obtain~\eqref{K_lem5_eq}.
    \qed
  \end{proof}
  We note that in~\cite{strichartz:1987} 
  a similar result to this Lemma is obtained.
  
  We now start discussion about the latter term of the right-hand side 
  of ~\eqref{krk_main_0}. 
  \begin{myProp}\label{prop_Gronwall}
    There exists a constant $C>0$ such that 
    \begin{equation}\label{prop_Gronwall_eq}
      \left|g(W)(x)-g(W)(y)\right|\leq
      C\left\|W\right\|^{m+1}_{C^{m+1}}+|x-y|\exp\left(\|W\|_{C^1}\right)
    \end{equation}
    for $g\in\mathcal{IS}(m)$ and
    $W\in C^\infty_b({\mathbb R}^N;{\mathbb R}^N)$. 
  \end{myProp}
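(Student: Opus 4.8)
The plan is to route the comparison through the exact time-one flow $\exp(W)$, using the defining inequality~\eqref{RK_er} of $\mathcal{IS}(m)$ twice and a Gronwall estimate for the flow once. First I insert $\exp(W)(x)$ and $\exp(W)(y)$ via the triangle inequality:
\begin{equation*}
  \left|g(W)(x)-g(W)(y)\right|\leq
  \left|g(W)(x)-\exp(W)(x)\right|
  +\left|\exp(W)(x)-\exp(W)(y)\right|
  +\left|\exp(W)(y)-g(W)(y)\right|.
\end{equation*}
Since $g\in\mathcal{IS}(m)$, each of the first and third terms is at most $C_m\|W\|_{C^{m+1}}^{m+1}$ by~\eqref{RK_er} (taking the supremum over the relevant point), so together they contribute at most $2C_m\|W\|_{C^{m+1}}^{m+1}$.

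It then remains to bound the middle term $\left|\exp(W)(x)-\exp(W)(y)\right|$, which is the distance at time $1$ between the solutions $z^x_t$ and $z^y_t$ of $\frac{d}{dt}z_t=W(z_t)$ started at $x$ and $y$ respectively. Because $W\in C_b^\infty({\mathbb R}^N;{\mathbb R}^N)$, $W$ is globally Lipschitz and these flows exist on all of $[0,1]$. Writing
\begin{equation*}
  z^x_t-z^y_t=(x-y)+\int_0^t\left(W(z^x_s)-W(z^y_s)\right)ds
\end{equation*}
and using $\left|W(a)-W(b)\right|\leq\left\|W^{(1)}\right\|\,|a-b|\leq\|W\|_{C^1}\,|a-b|$ gives $\left|z^x_t-z^y_t\right|\leq|x-y|+\|W\|_{C^1}\int_0^t\left|z^x_s-z^y_s\right|ds$, and Gronwall's inequality yields $\left|z^x_t-z^y_t\right|\leq|x-y|\exp\!\left(\|W\|_{C^1}t\right)$. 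Evaluating at $t=1$ bounds the middle term by $|x-y|\exp\!\left(\|W\|_{C^1}\right)$.

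Combining the two estimates gives~\eqref{prop_Gronwall_eq} with $C=2C_m$. The argument is essentially routine; the only points needing a little care are that the flow $\exp(W)$ is globally defined (which follows from $W$ and its derivatives being bounded) and that the Lipschitz constant of $x\mapsto\exp(W)(x)$ is controlled by $\|W\|_{C^1}$ uniformly in $W$ and over $t\in[0,1]$, which is precisely what the Gronwall step provides.
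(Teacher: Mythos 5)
Your proof is correct and follows essentially the same route as the paper: the paper's (very terse) proof also rests on the Gronwall estimate $\left|\exp(W)(x)-\exp(W)(y)\right|\leq|x-y|\exp\left(\|W\|_{C^1}\right)$, with the remaining terms handled exactly as you do, by the defining inequality~\eqref{RK_er} of $\mathcal{IS}(m)$ via the triangle inequality. Your write-up simply makes explicit the steps the paper leaves to the reader, yielding the constant $C=2C_m$.
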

  \begin{proof}
    Since Gronwall's inequality gives  
    \begin{equation*}
      \left|\exp\left(W\right)(x)-\exp\left(W\right)(y)\right|\leq
      \left|x-y\right|\exp\left(\left\|W\right\|_{C^1}\right),
    \end{equation*}
    \eqref{prop_Gronwall_eq} can be derived.
    \qed
  \end{proof}
  
  Since $g_i\in\mathcal{IS}(m)$ and each $Z_i$ satisfies ~\eqref{Z_condi2}, 
  we see 
  that for some $C_1>0$, 
  \begin{multline}\label{krk_main_1}
    \left\|\sup_{x\in{\mathbb R}^N}\left|
    g_M\left(\Phi_s\left(Z_M\right)\right)(x)
    -\exp\left(\Phi_s\left(Z_M\right)\right)(x)\right|\right\|_{L^p}\\
    \leq \left\|C_m\left\|
    \Phi_s\left(Z_M\right)
    \right\|_{C^{m+1}}^{m+1}\right\|_{L^p}
    \leq C_1s^{(m+1)/2}.
  \end{multline}
  From this fact and Proposition~\ref{prop_Gronwall}, 
  there exists a constant $C_4>0$ such that
  \begin{equation*}
    \begin{split}
      &\left\|\sup_{x\in{\mathbb R}^N}
      \left|g_{M-1}\left(\Phi_s\left(Z_{M-1}\right)\right)\circ
      g_M\left(\Phi_s\left(Z_M\right)\right)(x)
      -\exp\left(\Phi_s\left(Z_{M-1}\right)\right)\circ
      \exp\left(\Phi_s\left(Z_M\right)\right)(x)\right|\right\|_{L^p}\\
      &\leq \left\|\sup_{x\in{\mathbb R}^N}
      \left|g_{M-1}\left(\Phi_s\left(Z_{M-1}\right)\right)\circ
      \exp\left(\Phi_s\left(Z_M\right)\right)(x)
      -\exp\left(\Phi_s\left(Z_{M-1}\right)\right)\circ
      \exp\left(\Phi_s\left(Z_M\right)\right)(x)\right|\right\|_{L^p}\\
      &\quad +\left\|\sup_{x\in{\mathbb R}^N}
      \left|g_{M-1}\left(\Phi_s\left(Z_{M-1}\right)\right)\circ
      g_M\left(\Phi_s\left(Z_M\right)\right)(x)
      -g_{M-1}\left(\Phi_s\left(Z_{M-1}\right)\right)\circ
      \exp\left(\Phi_s\left(Z_M\right)\right)(x)\right|\right\|_{L^p}\\
      &\leq C_2s^{(m+1)/2}\\
      &\quad+
      \left\|
      C_3\left\|\Phi_s\left(Z_{M-1}\right)\right\|_{C^{m+1}}^{m+1}
      +\sup_{x\in{\mathbb R}^N}\left|g_M\left(\Phi_s\left(Z_M\right)\right)(x)
      -\exp\left(\Phi_s\left(Z_M\right)\right)(x)\right|
      \exp\left(\left\|
      \Phi_s\left(Z_{M-1}\right)
      \right\|_{C^1}\right)\right\|_{L^p}\\
      &\leq C_4s^{(m+1)/2}
  \end{split}\end{equation*}
  where $C_2$ and $C_3$ are positive constants.  Inductively,
  \begin{multline}\label{krk_main_2}
    \bigg\|\sup_{x\in{\mathbb R}^N}
    \big|g_1\left(\Phi_s\left(Z_1\right)\right)\circ
    \cdots\circ g_M\left(\Phi_s\left(Z_M\right)\right)(x)\\
    -\exp\left(\Phi_s\left(Z_1\right)\right)\circ
    \cdots\circ\exp\left(\Phi_s\left(Z_M\right)\right)(x)
    \big|\bigg\|_{L^p}
    \leq C_5s^{(m+1)/2}
  \end{multline}
  where $C_5>0$.
  
  Lemma~\ref{K_lem5} and~\eqref{krk_main_2} complete the proof.

  \section{Construction of the $\mathcal{L}_\mathbb{R}((A))$-valued 
    random variables $Z_1,\dots, Z_M$}\label{sec:2}
  
  \begin{myLem}\label{lem_coeff_repr}
    For $i=1,\dots,M$, let $Y_i$  
    be Gaussian random variables such that 
    \begin{equation*}
      E\left[Y_i\right]=0\quad\text{and}\quad E\left[Y_iY_j\right]=R(i,j),
      \quad \text{for}\ i,j=1,\dots, M
    \end{equation*}
    where $R(i,j)\in{\mathbb R}$.
    Moreover, for $i=1,\dots,M$ let $m_i\in{\mathbb Z}_{\geq 0}$ 
    be such that $\sum_{i=1}^Mm_i$ is even.  Then 
    we have 
    \begin{equation}\label{lem_coeff_repr_eq}
      E\left[Y_1^{m_1}\cdots Y_M^{m_M}\right]
      =\sum_{\left\{d_{ij}\right\}_{1\leq i\leq j\leq M}
	\in e\left(m_1,\dots, m_M\right)}
      2^{-\sum_{i=1}^Md_{ii}}
      \frac{\prod_{i=1}^M\left(m_i!\right)}
	   {\prod_{1\leq i\leq j\leq M}\left(d_{ij}!\right)}
	   \prod_{1\leq i\leq j\leq M}R(i,j)^{d_{ij}}
    \end{equation}
    where $e\left(m_1,\dots, m_M\right)$ is a set of 
    $\{d_{ij}\}_{1\leq i\leq j\leq M}$ satisfying that 
    $d_{ij}\in{\mathbb Z}_{\geq 0}$ and that 
    \begin{equation*}   
      \sum_{1\leq j<i}d_{ji}+2d_{ii}+
      \sum_{i<j \leq M}d_{ij}=m_i 
    \end{equation*}
    for $i=1,\dots,M$.  
  \end{myLem}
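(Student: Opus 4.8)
The plan is to identify $E\left[Y_1^{m_1}\cdots Y_M^{m_M}\right]$ with a Taylor coefficient of the moment generating function of the centred jointly Gaussian vector $(Y_1,\dots,Y_M)$ (its covariance matrix $R=(R(i,j))$ being symmetric and, since such a vector is assumed to exist, positive semi-definite). First I would record the standard identity
\begin{equation*}
  E\left[\exp\Bigl(\sum_{i=1}^M t_i Y_i\Bigr)\right]
  =\exp\Bigl(\tfrac12\sum_{i,j=1}^M R(i,j)\,t_i t_j\Bigr),
\end{equation*}
whose right-hand side is an entire function of $(t_1,\dots,t_M)$; hence all joint moments are finite and
\begin{equation*}
  E\left[Y_1^{m_1}\cdots Y_M^{m_M}\right]
  =\Bigl(\prod_{i=1}^M m_i!\Bigr)
   \bigl[\,t_1^{m_1}\cdots t_M^{m_M}\,\bigr]
   \exp\Bigl(\tfrac12\sum_{i,j=1}^M R(i,j)\,t_i t_j\Bigr),
\end{equation*}
where $\bigl[\,\cdot\,\bigr]$ denotes extraction of the monomial coefficient.

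Next I would separate the diagonal from the off-diagonal part of the quadratic form, writing $\tfrac12\sum_{i,j}R(i,j)t_it_j=\sum_{1\le i\le j\le M}a_{ij}t_it_j$ with $a_{ii}=R(i,i)/2$ and $a_{ij}=R(i,j)$ for $i<j$, so that the generating function factorises and each factor expands as a power series in its monomial $t_it_j$:
\begin{equation*}
  \exp\Bigl(\tfrac12\sum_{i,j=1}^M R(i,j)\,t_i t_j\Bigr)
  =\prod_{1\le i\le j\le M}\exp\bigl(a_{ij}t_it_j\bigr)
  =\prod_{1\le i\le j\le M}\ \sum_{d_{ij}\ge0}\frac{a_{ij}^{d_{ij}}}{d_{ij}!}\,(t_it_j)^{d_{ij}}.
\end{equation*}
In a term indexed by $\{d_{ij}\}_{1\le i\le j\le M}$ the total exponent of $t_i$ is $\sum_{j<i}d_{ji}+2d_{ii}+\sum_{j>i}d_{ij}$, and requiring this to equal $m_i$ for every $i$ is precisely the condition defining $e(m_1,\dots,m_M)$. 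Reading off the coefficient of $t_1^{m_1}\cdots t_M^{m_M}$ therefore gives $\sum_{\{d_{ij}\}\in e(m_1,\dots,m_M)}\prod_{1\le i\le j\le M}a_{ij}^{d_{ij}}/d_{ij}!$; since $\prod_{i\le j}a_{ij}^{d_{ij}}=2^{-\sum_i d_{ii}}\prod_{i\le j}R(i,j)^{d_{ij}}$, multiplying by $\prod_i m_i!$ reproduces exactly \eqref{lem_coeff_repr_eq}.

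The one step that needs care is this last degree bookkeeping: a given index $i$ receives a $t_i$-contribution from each factor $\exp(a_{ji}t_jt_i)$ with $j<i$, from each factor $\exp(a_{ij}t_it_j)$ with $j>i$, and a doubled contribution from $\exp(a_{ii}t_i^2)$, and one must check that the resulting system of degree equations coincides with the relations defining $e(m_1,\dots,m_M)$. If $\sum_i m_i$ is odd there is nothing to prove, since then $e(m_1,\dots,m_M)=\emptyset$ while $E\left[Y_1^{m_1}\cdots Y_M^{m_M}\right]=0$ by the symmetry $Y\mapsto -Y$, so both sides of \eqref{lem_coeff_repr_eq} vanish. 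An alternative route, should one prefer to avoid generating functions, is Wick's theorem: group the pairings of the $m_1+\dots+m_M$ factors according to the number $d_{ij}$ of pairs joining a $Y_i$-factor to a $Y_j$-factor, and a routine multinomial count of the pairings realising a prescribed $\{d_{ij}\}$ returns the constant $2^{-\sum_i d_{ii}}\prod_i m_i!\big/\prod_{1\le i\le j\le M}d_{ij}!$.
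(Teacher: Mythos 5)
Your argument is correct and follows essentially the same route as the paper: both start from the Gaussian moment generating function identity and extract the coefficient of $t_1^{m_1}\cdots t_M^{m_M}$ in $\exp\bigl(\tfrac{1}{2}\sum_{i,j}R(i,j)t_it_j\bigr)$. The only difference is cosmetic bookkeeping: the paper keeps the single degree-$\tfrac{l}{2}$ term of the exponential series and expands it by the multinomial theorem, then uses $\sum_{i<j}d_{ij}=\tfrac{l}{2}-\sum_i d_{ii}$ to collect the powers of two, whereas your factorization of the exponential over pairs $i\leq j$ yields the factor $2^{-\sum_i d_{ii}}$ directly.
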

  \begin{proof}
    Let $l=\sum_{i=1}^Mm_i$.  We have 
    \begin{equation}\label{lem_coeff_repr_1}
      \begin{split}
	E\left[Y^{m_1}_1\cdots Y^{m_M}_M\right]
	&=\left. E\left[\frac{\partial^l}
	  {\partial z^{m_1}_1\cdots\partial z^{m_M}_M}
	  \exp\left(\sum_{i=1}^Mz_iY_i\right)\right]\right|_{z=0}\\
	&=\left. \frac{\partial^l}{\partial z^{m_1}_1\cdots\partial z^{m_M}_M}
	\left(
	\exp\left(\frac{1}{2}\sum_{1\leq i,j\leq M}R(i,j)z_iz_j\right)
	\right)
	\right|_{z=0}\\
	&=\frac{1}{2^{l/2}(l/2)!}
	\left. \frac{\partial^l}{\partial z^{m_1}_1\cdots\partial z^{m_M}_M}
	\left(\sum_{1\leq i,j\leq M}R(i,j)z_iz_j\right)^{l/2}\right|_{z=0} 
    \end{split}\end{equation}
    where $z=(z_1,\dots,z_M)\in{\mathbb R}^M$.  
    
    Let 
    \begin{equation}\label{lem_coeff_repr_2}
      e_l=\left\{\left\{d_{ij}\right\}_{1\leq\ i\leq j\leq M}\,\middle\vert\,
      d_{ij}\in{\mathbb Z}_{\geq 0}\, \text{and}\, 
      \sum_{1\leq i\leq j\leq M}d_{ij}=\frac{l}{2}\right\}. 
    \end{equation}
    Then 
    \begin{equation}\label{lem_coeff_repr_3}
      \begin{split}
	&\left(\sum_{1\leq i,j\leq M}R(i,j)z_iz_j\right)^{l/2}
	=\left(
	\sum_{i=1}^MR(i,i)z_i^2+2\sum_{1\leq i<j\leq M}R(i,j)z_iz_j
	\right)
	^{l/2}\\
	&=\sum_{\{d_{ij}\}_{1\leq i\leq j\leq M}\in e_l}
	\frac{\left(l/2\right)!}
	     {\prod_{1\leq i\leq j\leq M}\left(d_{ij}!\right)}
	     \prod_{i=1}^M\left(R(i,i)z_i^2\right)^{d_{ii}}
	     \prod_{1\leq i<j\leq M}\left(2R(i,j)z_iz_j\right)^{d_{ij}}\\
	     &=\sum_{\{d_{ij}\}_{1\leq i\leq j\leq M}\in e_l}
	     \frac{\left(l/2\right)!}
		  {{\prod_{1\leq i\leq j\leq M}\left(d_{ij}!\right)}}
		  2^{\sum_{1\leq i<j\leq M}d_{ij}}
		  \left(\prod_{1\leq i\leq j\leq M}R(i,j)^{d_{ij}}\right)
		  \left(
		  \prod_{i=1}^Mz_i^{(\sum_{1\leq j<i}d_{ji}+2d_{ii}
		    +\sum_{i<j\leq M}d_{ij})}
		  \right).
    \end{split}\end{equation}
    Hence 
    \begin{multline}\label{lem_coeff_repr_4}
      \left. \frac{\partial^l}{\partial z^{m_1}_1\cdots\partial z^{m_M}_M}
      \left(\sum_{1\leq i,j\leq M}R(i,j)z_iz_j\right)^{l/2}\right|_{z=0}\\
      =\left(m_1!\cdots m_M!\right)\left(\frac{l}{2}\right)!
      \sum_{\left\{d_{ij}\right\}_{1\leq i\leq j\leq M}
	\in e(m_1,\dots, m_M)}
      \frac{2^{\sum_{1\leq i\leq j\leq M}d_{ij}}}
	   {\prod_{1\leq i\leq j\leq M}\left(d_{ij}!\right)}
	   \prod_{1\leq i\leq j\leq M}R(i,j)^{d_{ij}}. 
    \end{multline}
    Since we have from the definition of $e(m_1,\dots, m_M)$ that 
    \begin{equation*}
      \sum_{1\leq i<j\leq M}d_{ij}=\frac{l}{2}-\sum_{i=1}^Md_{ii} 
    \end{equation*}
    for $\{d_{ij}\}\in e(m_1,\dots, m_M)$, \eqref{lem_coeff_repr_eq} 
    is derived from~\eqref{lem_coeff_repr_1} 
    and~\eqref{lem_coeff_repr_4}.
    \qed
  \end{proof}

  We need a simple representation 
  of the coefficient of each 
  $v_{i_1}v_{i_2}\cdots v_{i_\ell}$ in
  $E\left[\exp(Z_1)\cdots\exp(Z_M)\right]$ where
  $(i_1,\dots, i_\ell)\in\{0,1,\dots, d\}^\ell$ 
  and
  $Z_1,\dots Z_M$ are
  $\mathcal{L}_\mathbb{R}((A))$-valued random variables
  constructed with Gaussian random variables satisfying~\eqref{Z_coeff_rvs}.

  For $\ell, M\in\mathbb{Z}_{>0}$, let 
  $\mathcal{K}_\ell(M)=\left\{\vec{k}=(k_1,\dots, k_M)\in
  \left({\mathbb Z}_{\geq 0}\right)^M\,
  \middle\vert\,k_1+\cdots +k_M=\ell\right\}$.  
  For $w=v_{i_1}\cdots v_{i_\ell}\in A^*$, let 
  $N^w\, :\, \{0,1,\dots, d\}\times\{1,\dots, M\}\times \mathcal{K}_\ell(M)
  \longrightarrow {\mathbb Z}_{\geq 0}$ 
  be a function such that 
  \begin{equation*}
    N^w\left(i,j,\vec{k}\right)
    =\card\left(\left\{r\, \middle\vert\, i_r=i \quad\text{for}\quad 
    k_1+\cdots+k_{j-1}+1\leq r\leq k_1+\cdots+k_j\right\}\right).
  \end{equation*}

  \begin{theorem}\label{thm_coeff_repr}
    Let $w=v_{i_1}v_{i_2}\cdots v_{i_\ell}\in A^*$ and 
    $n^w(i)=\card\left(\{j\in\{1,\dots,\ell\}\, \middle\vert\, 
    i_j=i\}\right)$ for 
    $i=1,\dots, d$.  
    Then the coefficient of $w$, $C(w)$, in $E[\exp(Z_1)\cdots\exp(Z_M)]$ 
    becomes as follows: \\
    
    \noindent If $n^w(i)$ is odd for some $i\in\left\{1,\dots,d\right\}$, then 
    \begin{equation}\label{thm_coeff_repr_eq1}
      C(w)=0.  
    \end{equation}
    
    \noindent If $n^w(i)$ is even for every $i\in\{1,\dots,d\}$, then 
    \begin{multline}\label{thm_coeff_repr_eq2}
      C(w)=\displaystyle{\sum_{\vec{k}=(k_1,\dots, k_M)\in\mathcal{K}_\ell(M)}
	\frac{1}{k_1!\cdots k_M!}
	\prod_{j=1}^M\left(c_j\right)^{N^w\left(0,j,\vec{k}\right)}}\\
      \times
      \displaystyle{\prod_{p=1}^{d}
        \left(\sum_{\left\{d_{ij}\right\}_{1\leq i\leq j\leq M}\in 
	  e\left(N^w\left(p,1,\vec{k}\right),\dots,N^w\left(p,M,\vec{k}\right)
	  \right)}
	2^{-\sum_{i=1}^Md_{ii}}
	\frac{\prod_{j=1}^M\left(N^w\left(p,j,\vec{k}\right)!\right)}
	     {\prod_{1\leq i\leq j\leq M}\left(d_{ij}!\right)}
	     \prod_{1\leq i\leq j\leq M}R_{ij}^{d_{ij}}\right)}
    \end{multline}
    where $c_j$ and $R_{ij}$ are real numbers defined
    in~\eqref{Z_coeff_rvs}.
  \end{theorem}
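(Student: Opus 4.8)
The plan is to work everything out as an explicit identity in the free algebra, read off the coefficient of $w$ as a polynomial in the scalars $S_j^i$, and then take expectations by isolating the deterministic part, using independence across the ``noise index'' $i$, and invoking Lemma~\ref{lem_coeff_repr}. First I would expand
\[
\exp(Z_1)\cdots\exp(Z_M)
=\sum_{\vec k=(k_1,\dots,k_M)\in({\mathbb Z}_{\geq 0})^M}
\frac{1}{k_1!\cdots k_M!}\,Z_1^{k_1}\cdots Z_M^{k_M},
\]
and, writing $Z_j=\sum_{i=0}^d S_j^i v_i$ with the convention $S_j^0=c_j$, expand each power as
\[
Z_j^{k_j}=\sum_{(a_1,\dots,a_{k_j})\in\{0,\dots,d\}^{k_j}}
\Bigl(\prod_{t=1}^{k_j}S_j^{a_t}\Bigr)\,v_{a_1}\cdots v_{a_{k_j}} ,
\]
so that words of length $k_j$ are indexed bijectively by the tuples $(a_1,\dots,a_{k_j})$.

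Since the product $Z_1^{k_1}\cdots Z_M^{k_M}$ simply concatenates the words coming from the $M$ blocks, the word $w=v_{i_1}\cdots v_{i_\ell}$ occurs only when $k_1+\cdots+k_M=\ell$, i.e. $\vec k\in\mathcal{K}_\ell(M)$, and then it is produced uniquely: the $j$-th block must equal the subword $v_{i_{k_1+\cdots+k_{j-1}+1}}\cdots v_{i_{k_1+\cdots+k_j}}$. By the definition of $N^w$, the corresponding (random) coefficient is $\prod_{j=1}^M\prod_{t=k_1+\cdots+k_{j-1}+1}^{k_1+\cdots+k_j}S_j^{i_t}=\prod_{j=1}^M\prod_{i=0}^d (S_j^i)^{N^w(i,j,\vec k)}$, whence
\[
C(w)=\sum_{\vec k\in\mathcal{K}_\ell(M)}\frac{1}{k_1!\cdots k_M!}\,
E\Bigl[\prod_{j=1}^M\prod_{i=0}^d (S_j^i)^{N^w(i,j,\vec k)}\Bigr].
\]

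Next I would simplify the expectation. The $i=0$ factors contribute the deterministic quantity $\prod_{j=1}^M c_j^{N^w(0,j,\vec k)}$, which factors out, leaving $E\bigl[\prod_{j=1}^M\prod_{i=1}^d (S_j^i)^{N^w(i,j,\vec k)}\bigr]$. Because $E[S_j^i S_{j'}^{i'}]=R_{jj'}\delta_{ii'}$ and the family $(S_j^i)$ is jointly Gaussian, the vectors $(S_1^i,\dots,S_M^i)$ for distinct $i\in\{1,\dots,d\}$ are mutually independent, so this expectation equals $\prod_{p=1}^d E\bigl[\prod_{j=1}^M (S_j^p)^{N^w(p,j,\vec k)}\bigr]$. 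For fixed $p$ this is a mixed moment of the centered Gaussian vector $(S_1^p,\dots,S_M^p)$ with covariance $R_{jj'}$, and $\sum_{j=1}^M N^w(p,j,\vec k)=n^w(p)$. If $n^w(p)$ is odd for some $p$, then for that $p$ the total exponent is odd, so by the symmetry $S\mapsto -S$ of the law this factor vanishes for every $\vec k$, giving $C(w)=0$, i.e.~\eqref{thm_coeff_repr_eq1}. If every $n^w(p)$ is even, I apply Lemma~\ref{lem_coeff_repr} to each $p$ with $Y_j=S_j^p$, $m_j=N^w(p,j,\vec k)$, $R(j,j')=R_{jj'}$; this turns each $p$-factor into the sum over $e(N^w(p,1,\vec k),\dots,N^w(p,M,\vec k))$ appearing in \eqref{thm_coeff_repr_eq2}, and substituting back yields the stated formula.

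I expect the only delicate points to be bookkeeping ones rather than analytic: (i) verifying that for each admissible $\vec k$ the blocks reconstruct $w$ in a \emph{unique} way, so that the coefficient of $w$ in $Z_1^{k_1}\cdots Z_M^{k_M}$ is precisely $\prod_{j,i}(S_j^i)^{N^w(i,j,\vec k)}$ and not a sum of such monomials; and (ii) checking that the parity hypothesis $\sum_j m_j$ even of Lemma~\ref{lem_coeff_repr} is exactly the non-vanishing condition ``$n^w(p)$ even for all $p$'', with the odd case disposed of by the centered-Gaussian symmetry. Everything else is straightforward substitution and re-indexing.
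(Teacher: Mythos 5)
Your proposal is correct and follows essentially the same route as the paper's proof: Taylor expansion of $\exp(Z_1)\cdots\exp(Z_M)$, identification of the coefficient of $w$ for each $\vec k\in\mathcal{K}_\ell(M)$ as the monomial $\prod_{j,i}(S_j^i)^{N^w(i,j,\vec k)}$, factorization of the expectation over the index $p$ using the Gaussian structure and $E[S_j^iS_{j'}^{i'}]=R_{jj'}\delta_{ii'}$, and an application of Lemma~\ref{lem_coeff_repr}. Your treatment of the odd case via the symmetry of the centered Gaussian law, and your explicit remark on the uniqueness of the block decomposition, merely spell out steps the paper leaves implicit.
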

  \begin{proof}
    In the case where $n^w(i)$ is odd for some
    $i\in\left\{1,\dots,d\right\}$,  
    \eqref{thm_coeff_repr_eq1} is directly derived 
    from~\eqref{Z_coeff_rvs}. 
    \par
    We therefore consider the other case.     
    By the Taylor expansion of
    $\exp\left(Z_1\right)\cdots\exp\left(Z_M\right)$, 
    we have
    \begin{equation*}
      E\left[\exp\left(Z_1\right)\cdots\exp\left(Z_M\right)\right]
      =\sum_{k_1,\dots, k_M=0}^\infty\frac{1}{k_1!\cdots k_M!}
      E\left[\left(c_1v_0+\sum_{i=1}^dS^i_1v_i\right)^{k_1}\cdots
	\left(c_Mv_0+\sum_{i=1}^dS^i_Mv_i\right)^{k_M}\right].  
    \end{equation*}
    Hence
    \begin{equation}\label{thm_coeff_repr_1}
      \begin{split}
	C(w)&=\langle 
	E\left[\exp\left(Z_1\right)\cdots\exp\left(Z_M\right)\right],
	w\rangle\\
	&=\sum_{\vec{k}=(k_1,\dots,k_M)\in\mathcal{K}_\ell(M)}
	\frac{1}{k_1!\cdots k_M!}
	E\left[\underbrace{S^{i_1}_1\cdots S^{i_{k_1}}_1}_{k_1}
	  \underbrace{S^{i_{k_1+1}}_2\cdots 
	    S^{i_{k_1+k_2}}_2}_{k_2}
	  \cdots
	  \underbrace{S^{i_{k_1+\cdots+k_{M-1}+1}}_M\cdots 
	    S^{i_{k_1+\cdots+k_M}}_M}_{k_M}\right]\\
	&=\sum_{\vec{k}=(k_1,\dots, k_M)\in\mathcal{K}_\ell(M)}
	\frac{1}{k_1!\cdots k_M!}
	E\left[
	  \left(c_1\right)^{N^w\left(0,1,\vec{k}\right)}
	  \cdots\left(c_M\right)^{N^w\left(0,M,\vec{k}\right)}
	  \left(S^1_1\right)^{N^w\left(1,1,\vec{k}\right)}\cdots
	  \left(S^1_M\right)^{N^w\left(1,M,\vec{k}\right)}\right.\\
	  &\qquad\qquad\qquad\qquad\qquad\qquad\qquad\left. \cdots
	  \left(S^d_1\right)^{N^w\left(d,1,\vec{k}\right)}\cdots
	  \left(S^d_M\right)^{N^w\left(d,M,\vec{k}\right)}\right].  
      \end{split}
    \end{equation}
    From the definition of $S^i_j$,  
    \begin{equation*}
      C(w)=
      \sum_{\vec{k}=(k_1,\dots,k_M)\in\mathcal{K}_\ell(M)}
      \frac{1}{k_1!\cdots k_M!}
      \prod_{j=1}^M\left(c_j\right)^{N^w\left(0,j,\vec{k}\right)}
      \prod_{p=1}^{d}
      E\left[\left(S^p_1\right)^{N^w\left(p,1,\vec{k}\right)}\cdots
	\left(S^p_M\right)^{N^w\left(p,M,\vec{k}\right)}\right]
    \end{equation*}
    Applying~\eqref{lem_coeff_repr_eq} from Lemma~\ref{lem_coeff_repr},  
    we obtain~\eqref{thm_coeff_repr_eq2}.
    \qed
  \end{proof}
  
  On the other hand, the value of the coefficient of each 
  $v_{i_1}\cdots v_{i_\ell}$ in\\
  $j_m\left(\exp\left(v_0+(1/2)\sum_{i=1}^dv_i^2\right)\right)$ can be
  obtained by the following proposition.
  
  \begin{myProp}
    Let $A^0=\{v_0, v_1v_1, v_2v_2, \dots, v_dv_d\}\subset A^*$.  
    Then 
    \begin{equation}\label{coeff_value}
      \exp\left(v_0+\frac{1}{2}\sum_{i=1}^dv_i^2\right)
      =\sum_{\substack{w=w_1\cdots w_l\\w_1,\dots,w_l\in A^0}}
      \frac{1}{2^{|w|-l}l!}w.
    \end{equation}
  \end{myProp}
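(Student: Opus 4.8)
The plan is to expand the exponential directly and read off the coefficient of each word by a short counting argument. Write $X=v_0+\frac{1}{2}\sum_{i=1}^d v_i^2$ and observe that $X=\sum_{a\in A^0}c_a\,a$, where $c_{v_0}=1$ and $c_{v_iv_i}=\frac12$ for $i=1,\dots,d$. In particular $(X,1)=0$, so $\exp(X)=1+\sum_{l\geq 1}X^l/l!$ is well defined in $\mathbb{R}\langle\langle A\rangle\rangle$ in the sense of the notation section.

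First I would expand each power by distributivity, keeping the factors ordered:
\[
X^l=\sum_{(a_1,\dots,a_l)\in (A^0)^l}c_{a_1}\cdots c_{a_l}\,(a_1\cdots a_l).
\]
Every summand here is a word $w=a_1\cdots a_l$ together with a distinguished factorization into $l$ factors, each lying in $A^0$; conversely each pair consisting of a word $w$ and such a factorization $w=w_1\cdots w_l$ arises from exactly one tuple. Hence the sum over tuples in $(A^0)^l$ is literally the sum over the $(\text{word},\text{factorization})$ pairs with $l$ factors, which is precisely the index set appearing in~\eqref{coeff_value}.

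Next I would evaluate the scalar $c_{a_1}\cdots c_{a_l}$ attached to a factorization $w=w_1\cdots w_l$. Each factor equal to $v_0$ contributes $1$ and each factor of the form $v_iv_i$ contributes $\frac12$; if $k$ of the $l$ factors have length $2$ and the remaining $l-k$ have length $1$, then $|w|=2k+(l-k)=l+k$, so $k=|w|-l$ and therefore $c_{a_1}\cdots c_{a_l}=2^{-(|w|-l)}$. Consequently
\[
\frac{1}{l!}X^l=\sum_{\substack{w=w_1\cdots w_l\\ w_1,\dots,w_l\in A^0}}\frac{1}{2^{|w|-l}\,l!}\,w,
\]
and summing over $l\geq 0$ yields~\eqref{coeff_value}. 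For a fixed word $w$ only the finitely many $l$ with $\lceil|w|/2\rceil\leq l\leq |w|$ contribute, so the coefficient of each word is a finite sum and there is no convergence issue.

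There is no genuine obstacle here; the only point that needs a little care is the bookkeeping in the first two steps — recognizing that the sum over ordered tuples of elements of $A^0$ coincides exactly with the sum over factorizations that indexes the right-hand side of~\eqref{coeff_value}, and that the exponent $|w|-l$ correctly counts the number of length-two blocks in a factorization.
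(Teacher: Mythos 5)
Your argument is correct: the paper states this proposition without proof, and the direct expansion you give — writing $\exp$ as $\sum_l X^l/l!$, identifying each ordered tuple of factors from $A^0$ with a factorized word, and counting the length-two blocks to get the factor $2^{-(|w|-l)}$ — is exactly the routine computation the paper implicitly relies on. No gaps; the bookkeeping points you flag (bijection between tuples and factorizations, and $k=|w|-l$) are handled correctly.
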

  
  \noindent Therefore, taking 
  $\left\{S_j^i\right\}_{i=1,\dots, d, j=1,\dots, M}$ to
  equate~\eqref{thm_coeff_repr_eq1} or~\eqref{thm_coeff_repr_eq2} 
  with~\eqref{coeff_value} for
  $w=v_{i_1}v_{i_2}\cdots v_{i_\ell}$ with $\|w\|\leq m$, we can construct
  $Z_1,\dots, Z_M$.

  For $m=5$, we take $M=2$ to obtain solvable simultaneous
  equations which in fact become the following five:
  \begin{equation}\label{eqns_m_5_n_2}
    \begin{split}
      &c_1+c_2=1,\quad \frac{1}{2}(c_1R_{11}+c_2R_{22})+R_{12}=\frac{1}{2},\\
      &\frac{1}{6}(c_1R_{11}+c_2R_{22})+\frac{1}{2}c_1(R_{12}+R_{22})
      =\frac{1}{4},\\
      &\frac{1}{6}(c_1R_{11}+c_2R_{22})+\frac{1}{2}c_2(R_{11}+R_{22})
      =\frac{1}{4},\\
      &\frac{1}{24}(R_{11}^2+R_{22}^2)
      +\frac{1}{6}R_{12}(R_{11}+R_{22})+\frac{1}{4}R_{11}R_{22}
      =\frac{1}{8}.
    \end{split}
  \end{equation}
  The solution is~\eqref{soln_m_5_n_2}. Since we let 
  $\{S^i_j\}_{i=1,\dots, d, j=1,\dots, M}$ be 
  the Gaussian system, such random variables can be constructed.
  \begin{myRem}
    If we let $m=5$, then $M$ must be at least two.
  \end{myRem}

  \section{The Runge--Kutta method}\label{sec:3}
  
  \noindent We begin by briefly introducing the tree theory 
  following~\cite{bollobas:1979book}, ~\cite{Butcher:1987}, 
  and~\cite{Butcher:2003}. 
  For details of the Runge--Kutta method,
  see~\cite{Butcher:1987}, \cite{Butcher:2003}, and~\cite{roessler:2003}.  
  
  All trees introduced here are called directed or rooted trees 
  in the literature listed above.
  
  \begin{myDef}
    A labelled tree $\ltree{t}$ is a pair of finite sets
    $\left(V(\ltree{t}), E(\ltree{t})\right)$ that satisfies the
    following conditions:
    \begin{enumerate}[{\rm (1)}]
    \item $V(\ltree{t})\subset{\mathbb Z}$, $V(\ltree{t})\neq\emptyset$, 
      and $E(\ltree{t})\subset
      \left\{(x,y)\in V(\ltree{t})\times V(\ltree{t})\; : 
      \;x< y\right\}$.
    \item For each $x\in V(\ltree{t})$, 
      if $(x, y)\in E(\ltree{t})$
      and $(x', y)\in E(\ltree{t})$, then $x=x'$.\\
    \item For two distinct elements $x, y\in V(\ltree{t})$, 
      one of the followings holds:
      \begin{enumerate}[{\rm (i)}]
      \item There exists a path from $x$ to $y$.
      \item There exists a path from $y$ to $x$.
      \item For some $z\in V(\ltree{t})\setminus\{x, y\}$,
	there exist paths $z$ to $x$ and $z$ to $y$.
      \end{enumerate}
      Here a path from $p_1$ to $p_l$ is a sequence
      $(p_1, p_2), (p_2, p_3),\dots,(p_{l-1}, p_l)$
      of elements of $E(\ltree{t})$.
      
    \end{enumerate}
    An element of $V(\ltree{t})$ is called a vertex of $\ltree{t}$
    and that of $E(\ltree{t})$ is called an edge of $\ltree{t}$.
    \par
    A particular labelled tree $\tau_\ell$ is that
    with $\card\left(V(\tau_\ell)\right)=1$ 
    and $E(\tau_\ell)=\emptyset$.
    \par
    For a labelled tree $\ltree{t}=(V(\ltree{t}), E(\ltree{t}))$, 
    let $\ltree{r}(\ltree{t})$ be $\card\left(V(\ltree{t})\right)$. 
    We define $\ltree{T}$ as the set of all labelled trees.
  \end{myDef}
  \begin{myProp}
    For each $\ltree{t}=(V(\ltree{t}), E(\ltree{t}))$, 
    there exists a unique vertex $r\in V(\ltree{t})$ 
    such that for any $x\in V(\ltree{t})\setminus\{r\}$, there is a path 
    from $r$ to $x$.
  \end{myProp}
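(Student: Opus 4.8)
The plan is to identify the root as the unique vertex of $\ltree{t}$ having no incoming edge, and then to recover paths from it by following ``parent'' edges backwards. First I would show that the set $S$ of vertices with no incoming edge is non-empty: since $V(\ltree{t})$ is a finite non-empty subset of $\mathbb{Z}$ it has a least element $m$, and condition (1) forbids any edge $(x,m)$ (such an edge would require $x<m$ with $x\in V(\ltree{t})$), so $m\in S$. Next I would show $\card(S)=1$ using condition (3): if $r_1\ne r_2$ both lay in $S$, then in case (i) a path from $r_1$ to $r_2$ exists and, being nontrivial because $r_1\ne r_2$, its last edge would be an incoming edge of $r_2$, contradicting $r_2\in S$; case (ii) is symmetric; and in case (iii) the common ancestor $z$ is distinct from $r_1$, so the path from $z$ to $r_1$ is nontrivial and exhibits an incoming edge of $r_1$, again a contradiction. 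Hence $S=\{r\}$ for a single vertex $r$.

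Then I would use condition (2) to define, for each vertex $x\notin S$, its unique parent $p(x)$ via the unique incoming edge $(p(x),x)$, noting that condition (1) gives $p(x)<x$. Iterating $p$ from an arbitrary vertex $x$ produces a strictly decreasing sequence of elements of the finite set $V(\ltree{t})$, which must therefore terminate, and it can only terminate at a vertex lying in $S$, i.e.\ at $r$. Reversing the resulting chain of edges yields a path from $r$ to $x$, so $r$ satisfies the asserted property. For uniqueness, I would observe that any vertex $r'$ with the asserted property must itself belong to $S$: an incoming edge $(p,r')$ together with a path from $r'$ to $p$ (which exists by the property, as $p\ne r'$) would form a closed path of positive length, impossible since the vertex labels strictly increase along every path by condition (1). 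Therefore $r'\in S=\{r\}$, so $r'=r$.

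I do not expect a genuine obstacle here: acyclicity is already built into condition (1), and the only points that require a little care are keeping the three subcases of condition (3) straight and checking that each path invoked in that argument is nontrivial, which is guaranteed by the distinctness of its endpoints.
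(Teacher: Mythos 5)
Your proof is correct. The paper states this proposition without any proof, leaving it as the standard fact that a labelled (rooted) tree in this sense has a unique root; your argument --- taking $r$ to be the unique vertex with no incoming edge (existence from the least label via condition (1), uniqueness via condition (3)), obtaining reachability by following parent edges backwards using condition (2), and ruling out a second root because labels strictly increase along paths --- is precisely the standard argument the authors implicitly rely on.
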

  Such a vertex $r$ is called the root of $\ltree{t}$.
  Here, $\tau_\ell$
  consists of only the root. 
  \begin{myDef}
    For $i=1,\dots, n$, let $\ltree{t}_i=\left(V\left(\ltree{t}_i\right), 
    E\left(\ltree{t}_i\right)\right)\in\ltree{T}$ be such that 
    $V\left(\ltree{t}_i\right)\cap V\left(\ltree{t}_j\right)=\emptyset$ 
    if $i\neq j$.
    Then $\mathbf{[}\ltree{t}_1\cdots\ltree{t}_n\mathbf{]}$ is 
    defined as
    $\ltree{t}=\left(V(\ltree{t}), E(\ltree{t})\right)\in\ltree{T}$ 
    such that 
    \begin{equation*}\begin{split}
	V(\ltree{t})&=\{r\}\cup V\left(\ltree{t}_1\right)\cup
	\cdots\cup V\left(\ltree{t}_n\right)\\
	E(\ltree{t})&=\{(r,r_1),\dots,(r, r_n)\}\cup E\left(\ltree{t}_1\right)
	\cup\cdots\cup E\left(\ltree{t}_n\right)
    \end{split}\end{equation*}
    where each $r_i$ 
    denotes the root of $\ltree{t}_i$
    and $r=\min\{r_1,\dots,r_n\}-1$.
  \end{myDef}
  \begin{myRem}
    For $\ltree{t}_1,\dots,\ltree{t}_n\in\ltree{T}$, we have that 
    \begin{equation*}
      \left[\ltree{t}_1\cdots\ltree{t}_n\right]=
      \left[\ltree{t}_{\varpi(1)}\cdots\ltree{t}_{\varpi(n)}\right]
    \end{equation*}
    for any permutation $\varpi\in\mathfrak{S}_n$. 
  \end{myRem}
  \begin{myDef}
    Let $\ltree{t}_i=\left(V\left(\ltree{t}_i\right), 
    E\left(\ltree{t}_i\right)\right)
    \in\ltree{T}$ for $i=1,2$.  
    We say that $\ltree{t}_1$ and $\ltree{t}_2$ are isomorphic, 
    written as $\ltree{t}_1\isom\ltree{t}_2$, 
    if there exists a bijection 
    $\varpi :\, V\left(
    \ltree{t}_1\right)\longrightarrow V\left(\ltree{t}_2\right)$
    such that $(x,y)\in E\left(\ltree{t}_1\right)$ 
    if and only if 
    $\left(\varpi(x), \varpi(y)\right)\in E\left(\ltree{t}_2\right)$.
    
    In particular, when $\ltree{t}_1\isom\ltree{t}_2$ and 
    $V\left(\ltree{t}_1\right)= V\left(\ltree{t}_2\right)$, 
    that is, $\varpi$ is a permutation, we say that $\ltree{t}_1$ and 
    $\ltree{t}_2$ are equivalent and write $\ltree{t}_1\sim\ltree{t}_2$.
  \end{myDef}
  \begin{myProp}
    Both $\isom$ and $\sim$ are equivalence relations.
  \end{myProp}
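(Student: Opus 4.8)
The plan is to check, directly from the definitions, the three defining properties of an equivalence relation --- reflexivity, symmetry, and transitivity --- first for $\isom$ and then for $\sim$, reusing the $\isom$ case in the second step.

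First I would handle $\isom$. For reflexivity, take $\varpi=\mathrm{id}_{V(\ltree{t})}$: this is a bijection of $V(\ltree{t})$ onto itself, and the required condition ``$(x,y)\in E(\ltree{t})$ if and only if $(\varpi(x),\varpi(y))\in E(\ltree{t})$'' is then trivially true, so $\ltree{t}\isom\ltree{t}$. For symmetry, suppose $\varpi\colon V(\ltree{t}_1)\to V(\ltree{t}_2)$ witnesses $\ltree{t}_1\isom\ltree{t}_2$. Then $\varpi^{-1}\colon V(\ltree{t}_2)\to V(\ltree{t}_1)$ is again a bijection, and substituting $x=\varpi^{-1}(x')$, $y=\varpi^{-1}(y')$ into the biconditional for $\varpi$ turns it into ``$(x',y')\in E(\ltree{t}_2)$ if and only if $(\varpi^{-1}(x'),\varpi^{-1}(y'))\in E(\ltree{t}_1)$'', which is exactly the condition that $\varpi^{-1}$ witnesses $\ltree{t}_2\isom\ltree{t}_1$. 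For transitivity, if $\varpi$ witnesses $\ltree{t}_1\isom\ltree{t}_2$ and $\sigma$ witnesses $\ltree{t}_2\isom\ltree{t}_3$, then $\sigma\circ\varpi\colon V(\ltree{t}_1)\to V(\ltree{t}_3)$ is a bijection, and chaining the two biconditionals gives ``$(x,y)\in E(\ltree{t}_1)$ iff $(\sigma(\varpi(x)),\sigma(\varpi(y)))\in E(\ltree{t}_3)$'', so $\ltree{t}_1\isom\ltree{t}_3$.

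For $\sim$, I would observe that, by definition, $\ltree{t}_1\sim\ltree{t}_2$ holds exactly when $\ltree{t}_1\isom\ltree{t}_2$ \emph{and} $V(\ltree{t}_1)=V(\ltree{t}_2)$; that is, $\sim$ is the intersection of $\isom$ with the relation ``having the same vertex set''. The latter is obviously an equivalence relation, and an intersection of two equivalence relations is again one, so the statement for $\sim$ follows from the statement for $\isom$. If one prefers a hands-on check, the maps $\mathrm{id}$, $\varpi^{-1}$, and $\sigma\circ\varpi$ used above are permutations of the relevant common vertex set whenever their inputs are, which is what keeps one inside $\sim$ rather than merely $\isom$.

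There is essentially no substantive obstacle here; the whole argument is bookkeeping. The only points deserving a moment's attention are that the edge condition in the definition of $\isom$ is stated as an ``if and only if'', which is precisely what makes the inverse map an isomorphism in the symmetry step, and that equality of vertex sets is preserved under composition, which is what upgrades the transitivity argument from $\isom$ to $\sim$.
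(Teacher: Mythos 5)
Your proof is correct. The paper states this proposition without proof, treating it as routine, and your argument (identity, inverse, and composition of the witnessing bijections for $\isom$, then viewing $\sim$ as the intersection of $\isom$ with the equivalence relation of having equal vertex sets) is exactly the standard verification one would supply.
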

  \begin{myProp}\label{RK:prop:3}
    Let 
    $\ltree{t}_i=\left(V\left(\ltree{t}_i\right), 
    E\left(\ltree{t}_i\right)\right)\in\ltree{T}$ and 
    $\ltree{u}_i=\left(V\left(\ltree{u}_i\right), 
    E\left(\ltree{u}_i\right)\right)\in\ltree{T}$ for $i=1,\dots, n$.  
    Suppose that $\ltree{t}_i\isom\ltree{u}_i$ for $i=1,\dots,n$ and that 
    \begin{equation*}
      V\left(\ltree{t}_i\right)\cap V\left(\ltree{t}_j\right)=\emptyset 
      \quad\text{and}\quad 
      V\left(\ltree{u}_i\right)\cap V\left(\ltree{u}_j\right)=\emptyset 
    \end{equation*}
    if $i\neq j$.  Then 
    \begin{equation*}
      \mathbf{[}\ltree{t}_1\cdots\ltree{t}_n\mathbf{]}\isom
      \mathbf{[}\ltree{u}_1\cdots\ltree{u}_n\mathbf{]}.
    \end{equation*}
  \end{myProp}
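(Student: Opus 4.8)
The plan is to build the required isomorphism simply by gluing together the isomorphisms that are assumed on the components. For each $i$ fix a bijection $\varpi_i\colon V(\ltree{t}_i)\to V(\ltree{u}_i)$ witnessing $\ltree{t}_i\isom\ltree{u}_i$, and write $r_i$ and $r_i'$ for the roots of $\ltree{t}_i$ and $\ltree{u}_i$. The first step is to observe that $\varpi_i(r_i)=r_i'$: since $\varpi_i$ preserves edges in both directions it preserves paths, so the path from $r_i$ to any $x\in V(\ltree{t}_i)\setminus\{r_i\}$ is carried to a path from $\varpi_i(r_i)$ to $\varpi_i(x)$; as $\varpi_i$ is surjective, $\varpi_i(r_i)$ admits a path to every other vertex of $\ltree{u}_i$, hence equals $r_i'$ by the uniqueness of the root. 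I would also record the elementary fact that the orientation convention $x<y$ on edges forces the root of any labelled tree to be its (strict) minimum-labelled vertex, since a path from the root down to $x$ is a chain of strict increases.

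Using this, with $r:=\min\{r_1,\dots,r_n\}-1$ and $r':=\min\{r_1',\dots,r_n'\}-1$ as in the definition of the bracket, one gets $r<r_j$ and $r'<r_j'$ for every $j$, so $r\notin\bigcup_i V(\ltree{t}_i)$ and $r'\notin\bigcup_i V(\ltree{u}_i)$; moreover $\{r_1',\dots,r_n'\}=\{\varpi_1(r_1),\dots,\varpi_n(r_n)\}$ by the first step. I would then define
\[
  \varpi\colon V\bigl(\mathbf{[}\ltree{t}_1\cdots\ltree{t}_n\mathbf{]}\bigr)\longrightarrow
  V\bigl(\mathbf{[}\ltree{u}_1\cdots\ltree{u}_n\mathbf{]}\bigr),\qquad
  \varpi(r)=r',\quad \varpi|_{V(\ltree{t}_i)}=\varpi_i,
\]
which is well defined and bijective because the $V(\ltree{t}_i)$ are pairwise disjoint and avoid $r$, the $V(\ltree{u}_i)$ are pairwise disjoint and avoid $r'$, and each $\varpi_i$ is a bijection onto $V(\ltree{u}_i)$.

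The last step is the edge condition. Every edge of $\mathbf{[}\ltree{t}_1\cdots\ltree{t}_n\mathbf{]}$ is either of the form $(r,r_i)$ or lies in some $E(\ltree{t}_i)$; in the first case $\varpi$ sends it to $(r',\varpi_i(r_i))=(r',r_i')$, an edge of $\mathbf{[}\ltree{u}_1\cdots\ltree{u}_n\mathbf{]}$, and in the second case $\varpi$ sends it, through the isomorphism $\varpi_i$, to an edge of $\ltree{u}_i$ and hence of $\mathbf{[}\ltree{u}_1\cdots\ltree{u}_n\mathbf{]}$. The two families stay separate because $r'\notin V(\ltree{u}_i)$ and $\varpi_i(V(\ltree{t}_i))=V(\ltree{u}_i)$, so running the same argument for $\varpi^{-1}$ (assembled from $r'\mapsto r$ and the $\varpi_i^{-1}$) gives the converse, and therefore $(x,y)\in E(\mathbf{[}\ltree{t}_1\cdots\ltree{t}_n\mathbf{]})$ iff $(\varpi(x),\varpi(y))\in E(\mathbf{[}\ltree{u}_1\cdots\ltree{u}_n\mathbf{]})$, which is the desired isomorphism. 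The only point needing genuine thought is the root-preservation observation together with the remark that $r,r'$ are fresh labels relative to every component; everything else is routine bookkeeping with disjoint unions.
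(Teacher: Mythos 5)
Your proof is correct and complete. The paper actually states Proposition~\ref{RK:prop:3} without proof (it is treated as a routine fact from the tree theory of Butcher and Bollob\'as), and your argument --- gluing the component isomorphisms $\varpi_i$, sending the new root $r$ to $r'$, after checking that each $\varpi_i$ carries root to root and that the labels $r,r'$ are fresh because the orientation convention $x<y$ makes each root the strict minimum of its vertex set --- is exactly the intended verification, with the only genuinely non-bookkeeping points (root preservation under isomorphism and freshness of $r,r'$) correctly identified and justified.
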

  \begin{myDef}
    We define $T=\ltree{T}/\isom$.  
    An element $t\in T$ is called a non-labelled tree.  
    For a labelled tree $\ltree{t}\in\ltree{T}$, $|\ltree{t}|$ denotes the 
    corresponding non-labelled tree $t\in T$.
  \end{myDef}
  
  Then, from Proposition~\ref{RK:prop:3}, the following result can be 
  derived. 
  \begin{myProp}\label{prop:TreeConcat}
    Under the same condition as Proposition~\ref{RK:prop:3},
    \begin{equation*}
      \left|\left[\ltree{t}_1\cdots\ltree{t}_n\right]\right|
      =\left|\left[\ltree{u}_1\cdots\ltree{u}_n\right]\right|
    \end{equation*}
    holds.
  \end{myProp}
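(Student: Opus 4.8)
The plan is to obtain this directly from Proposition~\ref{RK:prop:3} together with the very definition of the quotient map $|\cdot|\colon\ltree{T}\to T=\ltree{T}/\isom$. By construction, for $\ltree{s},\ltree{s}'\in\ltree{T}$ one has $|\ltree{s}|=|\ltree{s}'|$ precisely when $\ltree{s}\isom\ltree{s}'$; that is, $|\ltree{s}|$ is nothing but the $\isom$-equivalence class of $\ltree{s}$. So the equality asserted here is, verbatim, the statement that $[\ltree{t}_1\cdots\ltree{t}_n]$ and $[\ltree{u}_1\cdots\ltree{u}_n]$ lie in the same $\isom$-class.

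Concretely, I would first note that under the hypotheses inherited from Proposition~\ref{RK:prop:3} — pairwise disjointness of the vertex sets within the family $\{\ltree{t}_i\}$ and within $\{\ltree{u}_i\}$, and $\ltree{t}_i\isom\ltree{u}_i$ for each $i$ — the labelled trees $[\ltree{t}_1\cdots\ltree{t}_n]$ and $[\ltree{u}_1\cdots\ltree{u}_n]$ are well-defined elements of $\ltree{T}$: the disjointness is exactly what the definition of the bracket operation requires, and the newly introduced roots $r=\min\{r_1,\dots,r_n\}-1$ (and its counterpart for the $\ltree{u}_i$) lie strictly below, hence outside, the respective unions of vertex sets. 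I would then invoke Proposition~\ref{RK:prop:3} to get $[\ltree{t}_1\cdots\ltree{t}_n]\isom[\ltree{u}_1\cdots\ltree{u}_n]$, and finally apply $|\cdot|$ to both sides to obtain the claimed identity in $T$.

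There is essentially no obstacle at this level: all the genuine content has already been expended in Proposition~\ref{RK:prop:3}, whose proof glues the vertex bijections $\varpi_i\colon V(\ltree{t}_i)\to V(\ltree{u}_i)$ into a single bijection on the concatenated vertex sets, sends the fresh root to the fresh root, and checks that edges correspond. The only point worth flagging explicitly in the write-up is that $\isom$ is by definition the equivalence relation used to form $T$, so that the present proposition is exactly the well-definedness of the concatenation $[\,\cdot\,]$ at the level of non-labelled trees — which is the form in which it will be used later.
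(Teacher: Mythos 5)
Your argument is correct and matches the paper's own (implicit) derivation: the paper simply notes that the result follows from Proposition~\ref{RK:prop:3}, exactly as you do by passing the isomorphism $[\ltree{t}_1\cdots\ltree{t}_n]\isom[\ltree{u}_1\cdots\ltree{u}_n]$ through the quotient map $|\cdot|\colon\ltree{T}\to T=\ltree{T}/\isom$. Your extra remarks on disjointness and the fresh root are fine but not needed beyond what Proposition~\ref{RK:prop:3} already assumes.
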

  By virtue of Proposition~\ref{prop:TreeConcat},
  we can define a non-labelled tree 
  $t=\left[t_1\cdots t_n\right]$ for $t_1,\dots, t_n\in T$ as
  $\left|\ltree{[}\ltree{t}_1\cdots\ltree{t}_n\ltree{]}\right|$ 
  where
  $\ltree{t}_i\in\ltree{T}$ is a representative labelled tree such that 
  $|\ltree{t}_i|=t_i$.  In particular, we let $\tau=|\tau_\ell|$.
  
  \begin{myProp}
    For any $t\in T\setminus\left\{\tau\right\}$, there exist 
    $t_1,\dots,t_n\in T$ such that 
    $t=\left[t_1\cdots t_n\right]$.  Moreover 
    \begin{equation*}
      \left[
	t_1\cdots t_n\right]=\left[t_{\varpi(1)}\cdots t_{\varpi(n)}\right]
    \end{equation*}
    for any permutation $\varpi\in\mathfrak{S}_n$. 
  \end{myProp}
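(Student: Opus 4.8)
The plan is to pass to a labelled representative, remove the root, and identify the components that remain as the required subtrees. Fix $\ltree{t}=(V(\ltree{t}),E(\ltree{t}))\in\ltree{T}$ with $|\ltree{t}|=t$; since $t\neq\tau$ we have $\card(V(\ltree{t}))\geq 2$, as a labelled tree on one vertex has empty edge set and is isomorphic to $\tau_\ell$. Let $r$ be the root of $\ltree{t}$ and let $r_1,\dots,r_n$ be the (distinct) vertices $y$ with $(r,y)\in E(\ltree{t})$; taking any $x\in V(\ltree{t})\setminus\{r\}$ and looking at the first edge of a path from $r$ to $x$ shows $n\geq 1$. The basic observation I would record first is that every non-root vertex has a unique ``parent'': condition~(2) in the definition of a labelled tree gives at most one incoming edge, and the root property gives at least one. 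Hence, for each $x\in V(\ltree{t})$ there is a \emph{unique} directed path from $r$ to $x$, obtained by iterating ``pass to the parent''. For $i=1,\dots,n$ I then set $V(\ltree{t}_i)=\{r_i\}\cup\{x\mid\text{there is a path from }r_i\text{ to }x\}$ and $E(\ltree{t}_i)=E(\ltree{t})\cap\bigl(V(\ltree{t}_i)\times V(\ltree{t}_i)\bigr)$.

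Next I would carry out the verifications. First, $\ltree{t}_i\in\ltree{T}$: conditions~(1) and~(2) are inherited from $\ltree{t}$, and condition~(3) holds because for distinct $x,y\in V(\ltree{t}_i)$ a path joining them in $\ltree{t}$ stays inside $V(\ltree{t}_i)$ (every vertex on it is a descendant of $x$, hence of $r_i$), and if neither of $x,y$ is an ancestor of the other then $x,y\neq r_i$ and one may take $z=r_i$ as the witness, since $r_i$ reaches every vertex of $\ltree{t}_i$ by construction; also $r_i$ is then clearly the root of $\ltree{t}_i$. Second, the sets $V(\ltree{t}_1),\dots,V(\ltree{t}_n)$ are pairwise disjoint and do not contain $r$: if $x\in V(\ltree{t}_i)\cap V(\ltree{t}_j)$ then the unique path from $r$ to $x$ would begin with both $(r,r_i)$ and $(r,r_j)$, so $i=j$, while $r\notin V(\ltree{t}_i)$ because edge labels strictly increase. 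Third, $V(\ltree{t})=\{r\}\cup\bigcup_i V(\ltree{t}_i)$ and $E(\ltree{t})=\{(r,r_1),\dots,(r,r_n)\}\cup\bigcup_i E(\ltree{t}_i)$, again by following the unique path from the root. Comparing this with the definition of $[\ltree{t}_1\cdots\ltree{t}_n]$, whose vertex and edge sets are $\{r'\}\cup\bigcup_i V(\ltree{t}_i)$ and $\{(r',r_1),\dots,(r',r_n)\}\cup\bigcup_i E(\ltree{t}_i)$ with $r'=\min\{r_1,\dots,r_n\}-1$, the map that fixes $\bigcup_i V(\ltree{t}_i)$ pointwise and sends $r\mapsto r'$ is an isomorphism $\ltree{t}\isom[\ltree{t}_1\cdots\ltree{t}_n]$. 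Setting $t_i=|\ltree{t}_i|$ and invoking Proposition~\ref{prop:TreeConcat} to use the pairwise vertex-disjoint $\ltree{t}_i$ as the representatives in the definition of $[t_1\cdots t_n]$, we get $t=|\ltree{t}|=|[\ltree{t}_1\cdots\ltree{t}_n]|=[t_1\cdots t_n]$.

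For the permutation statement, I would observe that the labelled bracket is already symmetric in its arguments: the vertex set, the edge set, and the new root $\min\{r_1,\dots,r_n\}-1$ in the definition of $[\ltree{t}_1\cdots\ltree{t}_n]$ are all unchanged under permuting the factors (this is the remark immediately following that definition), so $[\ltree{t}_{\varpi(1)}\cdots\ltree{t}_{\varpi(n)}]=[\ltree{t}_1\cdots\ltree{t}_n]$ for every $\varpi\in\mathfrak{S}_n$; applying $|\cdot|$ together with the well-definedness of the non-labelled bracket then yields $[t_{\varpi(1)}\cdots t_{\varpi(n)}]=[t_1\cdots t_n]$. The one step I expect to require genuine care is checking condition~(3) for the subtrees $\ltree{t}_i$; once the uniqueness of the path from the root is available, the remaining identities are routine bookkeeping.
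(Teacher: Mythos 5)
Your proof is correct. The paper offers no proof of this proposition at all (it is quoted as standard material from the cited tree-theory references), and your argument --- unique parent via condition~(2), unique directed path from the root, the subtrees hanging off the children of the root, then Proposition~\ref{prop:TreeConcat} together with the symmetry remark for the labelled bracket --- is exactly the standard root-decomposition proof, with the relevant verifications (condition~(3) for the subtrees, disjointness, and the identification of vertex and edge sets) carried out correctly.
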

  
  Here, $[t_1^{m_1}\cdots t_n^{m_n}]$ denotes 
  $[\underbrace{t_1\cdots t_1}_{m_1} \cdots \underbrace{t_n\cdots t_n}_{m_n}]$ 
  where $t_i\in T$ for $i=1,\dots,n$.
  
  \begin{myDef}
    \begin{enumerate}[{\rm(1)}]
    \item    For $t=(V(t), E(t))\in T$, we define 
      $\alpha:T\longrightarrow{\mathbb Z}_{\geq 1}$, 
      $r:T\longrightarrow{\mathbb Z}_{\geq 1}$, and 
      $\sigma:T\longrightarrow{\mathbb Z}_{\geq 1}$ by 
      \begin{equation*}\begin{split}
	  \alpha(t)&=\card\left(\left\{\ltree{u}\in\ltree{T} \,\middle\vert\;
	  \ltree{u}\sim\ltree{t}\;\text{where}\;\ltree{t}\in\ltree{T}\;
	  \text{is a representative element such that 
	    $|\ltree{t}|=t$}\right\}\right)\\
	  r(t)&=\card\left(V(t)\right)\\
	  \sigma(t)&=\begin{cases}
	  1 &\text{if $t=\tau$}\\
	  \prod_{i=1}^l m_i!\sigma\left(t_i\right)^{m_i} 
	  &\text{if $t=\left[t_1^{m_i}\cdots t_l^{m_l}\right],\ l\geq 1$}
	  \end{cases}
      \end{split}\end{equation*}
      where $A=\left(a_{ij}\right)_{i,j=1,\dots,K}$.  We notice that 
      $\alpha$ is well-defined because 
      $\alpha$ denotes the number of ways a tree may be labelled.
    \item
      Let $\mathcal{A}$ be the set of $K\times K$ real matrices.  
      We inductively define derivative weights 
      $\zeta_i: T\times\mathcal{A}\longrightarrow
      {\mathbb R}$ for $i=1,\dots,K$ 
      by 
      \begin{equation*}
	\zeta_i(t;A)
	=\begin{cases}
	\sum_{j=1}^{K}a_{ij}
	&\text{if $t=\tau$}\\ 
	\sum_{j=1}^Ka_{ij}\prod_{k=1}^l\zeta_j\left(t_k;A\right) 
	&\text{if $t=\left[t_1\cdots t_l\right]$, $l\geq 1$}. 
	\end{cases}      
      \end{equation*}
      In addition, we define the elementary differentials 
      $D:C_{b}^\infty({\mathbb R}^N;{\mathbb R}^N)\times T\longrightarrow 
      C_{b}^\infty({\mathbb R}^N;{\mathbb R}^N)$ 
      as follows: 
      \begin{equation}
	D(W,t)(x)=\begin{cases}
	W(x) &\text{if $t=\tau,$}\\
	W^{(l)}(x)\left(D(W,t_1)(x), D(W, t_2)(x), \dots,
	D(W,t_l)(x)\right)
	&\text{if $t=\left[t_1t_2\cdots t_l\right]$, $l\geq 1$.}
	\end{cases}
      \end{equation}
    \end{enumerate}
  \end{myDef}
  
  Let $y(W,s)$ be a solution to an ODE 
  \begin{equation}\label{ODE}
    \frac{d}{ds}y(W,s) = W\left(y(W,s)\right),
    \quad y(W,0) = y_0
  \end{equation}
  where $W\in C^\infty_b({\mathbb R}^N,
  {\mathbb R}^N)$ and $y_0\in{\mathbb R}^N$.  Then 
  we have the following lemmas essentially proved
  in~\cite{Butcher:2003}, pp.~139--145. 
  
  \begin{myLem}\label{RK_lem1}
    For $m\in{\mathbb Z}_{\geq 1}$, 
    \begin{equation}\label{RK_lem1_eq}
      y^{(m)}(W,s)=
      \sum_{\ltree{t}\in\ltree{T}_m}D\left(W,|\ltree{t}|\right)
      \left(y(W,s)\right).
    \end{equation}
  \end{myLem}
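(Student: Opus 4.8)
The plan is to prove the identity \eqref{RK_lem1_eq} by induction on $m$, along the classical lines of \cite{Butcher:2003}. For $m=1$, the set $\ltree{T}_1$ consists of the single tree $\tau_\ell$, and since $D(W,|\tau_\ell|)(x)=D(W,\tau)(x)=W(x)$, the right-hand side of \eqref{RK_lem1_eq} equals $W(y(W,s))$, which is $y^{(1)}(W,s)$ by \eqref{ODE}.

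For the inductive step, assume \eqref{RK_lem1_eq} holds for some $m\geq 1$. Differentiating both sides in $s$ and using the chain rule together with \eqref{ODE} gives
\[
y^{(m+1)}(W,s)=\sum_{\ltree{t}\in\ltree{T}_m}
D(W,|\ltree{t}|)^{(1)}_{(y(W,s))}\bigl(W(y(W,s))\bigr),
\]
so the whole lemma reduces to a \emph{grafting identity}: for every $\ltree{t}\in\ltree{T}$,
\[
D(W,|\ltree{t}|)^{(1)}_{(x)}\bigl(W(x)\bigr)
=\sum_{v\in V(\ltree{t})}D\bigl(W,|\ltree{t}^{+v}|\bigr)(x),
\]
where $\ltree{t}^{+v}$ denotes the tree obtained from $\ltree{t}$ by adjoining a single fresh vertex (with a label larger than every label in $V(\ltree{t})$) together with the edge joining $v$ to it. I would prove this by induction on $\card(V(\ltree{t}))$. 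For $\ltree{t}=\tau_\ell$ it is immediate from $D(W,\tau)(x)=W(x)$. When $|\ltree{t}|=[t_1\cdots t_l]$ with $l\geq 1$, one differentiates $D(W,[t_1\cdots t_l])(x)=W^{(l)}_{(x)}\bigl(D(W,t_1)(x),\dots,D(W,t_l)(x)\bigr)$ by the product rule: the term in which $W^{(l)}$ is differentiated produces $W^{(l+1)}_{(x)}\bigl(W(x),D(W,t_1)(x),\dots,D(W,t_l)(x)\bigr)=D(W,[\tau\,t_1\cdots t_l])(x)$, i.e.\ grafting at the root; the term in which the $k$-th argument $D(W,t_k)$ is differentiated equals, by the inductive hypothesis applied to $t_k$, the sum over the vertices of $t_k$ of the elementary differentials obtained by grafting a leaf there. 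Collecting all contributions yields exactly one summand per vertex of $\ltree{t}$, which is the asserted identity; it descends to non-labelled trees because $D(W,\cdot)$ depends only on $|\ltree{t}|$.

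It then remains to check the combinatorics: as $\ltree{t}$ runs over $\ltree{T}_m$ and $v$ over $V(\ltree{t})$, each element of $\ltree{T}_{m+1}$ is produced exactly once. Indeed, in any $\ltree{u}\in\ltree{T}_{m+1}$ the vertex carrying the largest label is a leaf (by the order condition on $E$ it has no outgoing edge, and, not being the root, it has a single incoming edge, say from $v$); deleting that vertex returns a well-defined $\ltree{t}\in\ltree{T}_m$ with $\ltree{u}=\ltree{t}^{+v}$, so $(\ltree{t},v)\mapsto\ltree{t}^{+v}$ is a bijection onto $\ltree{T}_{m+1}$. Feeding this into the displayed formula for $y^{(m+1)}(W,s)$ together with the grafting identity gives $y^{(m+1)}(W,s)=\sum_{\ltree{u}\in\ltree{T}_{m+1}}D(W,|\ltree{u}|)(y(W,s))$, which closes the induction.

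The main obstacle is the grafting identity: one must verify that the product rule generates precisely one elementary differential per vertex of the tree and that differentiating a branch $D(W,t_k)$ matches up, via the inductive hypothesis, with grafting inside that branch. The base case and the bijective bookkeeping of the last step are routine once the monotone-labelling convention defining $\ltree{T}_m$ is spelled out.
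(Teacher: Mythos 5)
Your proof is correct, and it is essentially the classical argument of Butcher to which the paper defers --- the paper gives no proof of its own, merely stating that the lemma is essentially proved in Butcher (2003), pp.~139--145 --- namely induction on $m$, the grafting identity (itself proved by induction on the tree, with one term per vertex from the product rule), and the bijection onto $\ltree{T}_{m+1}$ obtained by deleting the leaf carrying the largest label. Your closing caveat is apt rather than routine: the paper never actually defines $\ltree{T}_m$, and the identity \eqref{RK_lem1_eq} only makes sense once one fixes a convention such as vertex set $\{1,\dots,m\}$ with labels increasing away from the root (so that each non-labelled tree $t$ appears with multiplicity $\alpha(t)$), which is exactly what your bookkeeping uses.
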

  
  Let 
  $T_m=\left\{t\in T : r(t)=m\right\}$ 
  and $T_{\leq m}=\bigsqcup_{n=0}^mT_m$ for $m\geq 0$ with $T_0=\emptyset$.  
  
  Let $A_{ex}\in\mathcal{A}$ denote $A$ in~\eqref{original_RK} 
  for the explicit Runge--Kutta method.  
  Then $Y_i(W,s)$ is definitely determined by $A_{ex}$ 
  with $a_{ij}=0$ if $i\leq j$ and so $Y(y_0;W,s)$ can be 
  constructed with $b$ and $Y_i(W,s)$
  as both seen in~\eqref{original_RK}.     
  \begin{myLem}\label{RK_lem2}
    Let $m\geq 1$.  If there exists a constant $C_m>0$ such that 
    \begin{equation}\label{RK_lem2_eq1}
      \left|Y_i(W,s)-
      \left(y_0+\sum_{t\in T_{\leq m-1}}s^{r(t)}
      \frac{\zeta_i(t)}{\sigma(t)}D(W,t)\left(y_0\right)\right)
      \right|
      \leq C_ms^m\|W\|^m_{C^m}
    \end{equation}
    for $i=1,\dots, K$, then there exists a constant $C_{m+1}$
    \begin{equation}\label{RK_lem2_eq2}
      \left|sW\left(Y_i(W,s)\right)
      -\sum_{\substack{l=1,\dots,m-1\\t=[t_1\cdots t_l]\in T_{\leq m}}}
      s^{r(t)}
      \frac{\prod_{k=1}^l\zeta_i(t_k)}
	   {\sigma(t)}D(W,t)\left(y_0\right)\right|
	   \leq C_{m+1}s^{m+1}\|W\|^{m+1}_{C^{m+1}}.
    \end{equation}
  \end{myLem}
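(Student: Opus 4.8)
The plan is to substitute the assumed expansion of $Y_i$ from \eqref{RK_lem2_eq1} into $W(Y_i(W,s))$, Taylor-expand $W$ about $y_0$, and then match the resulting terms with the tree sum in \eqref{RK_lem2_eq2}. Concretely, by \eqref{RK_lem2_eq1} we may write $Y_i(W,s)=y_0+\eta_i+R_i$ with $\eta_i=\sum_{t\in T_{\leq m-1}}s^{r(t)}\frac{\zeta_i(t)}{\sigma(t)}D(W,t)(y_0)$ and $|R_i|\leq C_m s^m\|W\|_{C^m}^m$; since $\|D(W,t)(y_0)\|\leq C_{r(t)}\|W\|_{C^{r(t)-1}}^{r(t)}$ and $s\in(0,1]$, this already gives $|Y_i(W,s)-y_0|\leq C s\,(1+\|W\|_{C^{m+1}})^{m}$. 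We then use Taylor's theorem,
\begin{equation*}
W(y_0+v)=\sum_{q=0}^{m-1}\frac{1}{q!}\,W^{(q)}(y_0)(v,\dots,v)+\mathrm{Rem}_m(v),\qquad
|\mathrm{Rem}_m(v)|\leq\tfrac{1}{m!}\,\|W^{(m)}\|\,|v|^{m},
\end{equation*}
with $v=\eta_i+R_i$.

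The combinatorial heart of the argument is the identification of the ``main'' part $s\sum_{q=0}^{m-1}\frac{1}{q!}W^{(q)}(y_0)(\eta_i,\dots,\eta_i)$ with the tree sum. Expanding each $W^{(q)}(y_0)(\eta_i,\dots,\eta_i)$ multilinearly and using the defining recursion $D(W,[s_1\cdots s_q])(y_0)=W^{(q)}(y_0)\big(D(W,s_1)(y_0),\dots,D(W,s_q)(y_0)\big)$ and the symmetry of $W^{(q)}$, the $q$-th term becomes, after the overall factor $s$, the sum over ordered tuples $(s_1,\dots,s_q)\in(T_{\leq m-1})^{q}$ of $\frac{1}{q!}\,s^{\,1+r(s_1)+\cdots+r(s_q)}\,\frac{\zeta_i(s_1)\cdots\zeta_i(s_q)}{\sigma(s_1)\cdots\sigma(s_q)}\,D(W,[s_1\cdots s_q])(y_0)$. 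One then groups these tuples according to the (order-independent) tree $t=[s_1\cdots s_q]$: if the branch multiset of $t$ is $\{t_1^{\mu_1},\dots,t_p^{\mu_p}\}$ with $\mu_1+\cdots+\mu_p=q$, then exactly $q!/(\mu_1!\cdots\mu_p!)$ ordered tuples realise it, all with the same weight, so the factor $1/q!$ and the multinomial count cancel; since $r(t)=1+\sum_k r(s_k)$ and, by definition, $\sigma(t)=\mu_1!\cdots\mu_p!\,\sigma(t_1)^{\mu_1}\cdots\sigma(t_p)^{\mu_p}$, the weight of the group collapses to $s^{r(t)}\frac{\prod_{k=1}^{q}\zeta_i(t_k)}{\sigma(t)}D(W,t)(y_0)$. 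Every $t\in T_{\leq m}$ arises exactly once this way as $q$ ranges over $0,\dots,m-1$ (the case $q=0$ giving $t=\tau$ and the term $sW(y_0)$), because the root of such a $t$ has at most $m-1$ children and all its branches lie in $T_{\leq m-1}$; hence these grouped terms reproduce precisely the sum in \eqref{RK_lem2_eq2}, the only extra grouped terms being those attached to trees $t$ with $r(t)>m$.

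Finally, the remaining contributions form the error. They are: (i) $s\cdot\mathrm{Rem}_m(\eta_i+R_i)$, handled by $|\mathrm{Rem}_m(\eta_i+R_i)|\leq\tfrac{1}{m!}\|W\|_{C^{m+1}}|\eta_i+R_i|^{m}$ and the bound on $|\eta_i+R_i|$ above; (ii) the terms in the multilinear expansion of $W^{(q)}(y_0)\big((\eta_i+R_i),\dots,(\eta_i+R_i)\big)$, $q\geq 1$, that contain at least one factor $R_i$, each carrying $|R_i|\leq C_m s^m\|W\|_{C^m}^m$ besides the overall $s$; and (iii) the grouped terms above belonging to trees $t$ with $r(t)>m$, which carry a factor $s^{r(t)}\leq s^{m+1}$. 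With $s\in(0,1]$ and the uniform bounds $\|D(W,t)(y_0)\|\leq C_{r(t)}\|W\|_{C^{r(t)-1}}^{r(t)}$, each of (i)--(iii) is $O(s^{m+1})$ uniformly in $y_0$, with the implied constant depending only on $m$ and $K$ and on a power of $\|W\|_{C^{m+1}}$, which yields \eqref{RK_lem2_eq2}. I expect the grouping step of the second paragraph---reconciling the multinomial count of ordered branch-tuples with the symmetry factor $\sigma(t)$ and the $1/q!$ of the Taylor expansion---to be the main obstacle; the analytic estimates are routine once that identity is in hand.
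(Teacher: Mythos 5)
The paper itself gives no proof of this lemma (it defers to Butcher's book), and your substitution-and-regrouping argument is exactly the standard B-series derivation that reference contains. The combinatorial heart of your proposal is correct: expanding $W^{(q)}(y_0)(\eta_i,\dots,\eta_i)$ multilinearly, using $D(W,[s_1\cdots s_q])(y_0)=W^{(q)}(y_0)\bigl(D(W,s_1)(y_0),\dots,D(W,s_q)(y_0)\bigr)$, grouping ordered branch tuples by the tree $t=[s_1\cdots s_q]$, and cancelling the multinomial count $q!/(\mu_1!\cdots\mu_p!)$ against the $1/q!$ and the symmetry factor $\sigma(t)=\prod_j\mu_j!\,\sigma(t_j)^{\mu_j}$ is precisely how the coefficient $\prod_k\zeta_i(t_k)/\sigma(t)$ arises, and your reading that the $q=0$ term $sW(y_0)$ (the tree $\tau$) belongs to the sum is the correct interpretation of the paper's slightly loose indexing.

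The gap is in your final step. The error terms (i)--(iii) are not bounded by $s^{m+1}\|W\|_{C^{m+1}}^{m+1}$ times a constant: the discarded trees in (iii) have $r(t)$ up to $1+(m-1)^2$ and contribute roughly $s^{r(t)}\|W\|_{C^{m-1}}^{r(t)}$, and the Taylor remainder in (i) contributes roughly $s^{m+1}\|W\|_{C^{m+1}}^{1+m(m-1)}$ and worse. Writing that the implied constant ``depends on a power of $\|W\|_{C^{m+1}}$'' concedes exactly this, but then it is not a constant: \eqref{RK_lem2_eq2} asserts a $C_{m+1}$ independent of $W$ and $s$ with the precise homogeneity $\|W\|_{C^{m+1}}^{m+1}$, and a bound of the form $s^{m+1}\bigl(\|W\|_{C^{m+1}}^{m+1}+\cdots+\|W\|_{C^{m+1}}^{N}\bigr)$ does not imply it when $\|W\|_{C^{m+1}}$ is large. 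This uniformity is not cosmetic: the lemma feeds Theorem~\ref{krk_main_thm3}, where $g\in\mathcal{IS}(m)$ requires the defining inequality \eqref{RK_er} with one constant for all $W\in C^\infty_b$, and the exact exponent $m+1$ is what later yields the rate $s^{(m+1)/2}$ in \eqref{krk_main_1}; moreover the induction in which this lemma sits passes from exact power $m$ in \eqref{RK_lem2_eq1} to exact power $m+1$, so the homogeneity must be preserved at each step.

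The missing idea is a homogeneity/case-splitting argument. Since $Y_i(W,s)=Y_i(sW,1)$, $sW\bigl(Y_i(W,s)\bigr)=(sW)\bigl(Y_i(sW,1)\bigr)$ and $s^{r(t)}D(W,t)=D(sW,t)$, the whole statement depends on $(W,s)$ only through $sW$, so it suffices to bound everything by powers of $u:=s\|W\|_{C^{m+1}}$. If $u\leq 1$, each of your terms has total $u$-degree at least $m+1$ (a term with $j\geq 1$ factors $R_i$ has degree $1+(q-j)+mj\geq m+1$ because $|\eta_i|\leq Cu$ and $|R_i|\leq C_mu^m$; a discarded tree has degree $r(t)\geq m+1$; the remainder has degree at least $1+m$), hence is at most $C(m,K)\,u^{m+1}$. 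If $u>1$, bound crudely: $|sW(Y_i(W,s))|\leq u$ and each retained tree term is at most $C\,u^{r(t)}\leq C\,u^{m+1}$ since $r(t)\leq m$. Together this gives \eqref{RK_lem2_eq2} with a constant depending only on $m$, $K$ and $(A,b)$. With that addition your argument closes; without it, the concluding ``which yields \eqref{RK_lem2_eq2}'' is not justified.
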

  
  Applying these lemmas to evaluations of the solution 
  to~\eqref{ODE} and the Runge--Kutta method~\eqref{original_RK}, 
  we obtain the following result.
  \begin{theorem}\label{RK_Taylor_thm}
    For $y$ satisfying~\eqref{ODE}, there exists a constant $C_{m+1}$
    \begin{equation}\label{extended_Taylor_exp}
      \left|\exp{(sW)}(y_0)-\left(y_0+\sum_{t\in T_{\leq m}}
      \frac{s^{r(t)}}{r(t)!}\alpha(t)D(W,t)\left(y_0\right)\right)\right|
      \leq C_{m+1}s^{m+1}\|W\|_{C^{m+1}}^{m+1}.
    \end{equation}
    On the other hand, for the Runge--Kutta method~\eqref{original_RK} 
    there exists a constant $C^\prime_{m+1}$ such that 
    \begin{multline}\label{extended_RK_Taylor_exp}
      \left|Y(y_0;W,s)-\left(y_0+\sum_{\substack{l=1,\dots,m-1\\
	  t=[t_1\cdots t_l]\in T_{\leq m}}}
      \frac{s^{r(t)}}{\sigma(t)}\sum_{i=1}^Kb_i\prod_{k=1}^l
      \zeta_i\left(t_k;A\right)D(W,t)
      \left(y_0\right)\right)\right|\\
      \leq C^\prime_{m+1}s^{m+1}\|W\|_{C^{m+1}}^{m+1}.
    \end{multline}
  \end{theorem}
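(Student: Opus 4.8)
The two displayed estimates are independent, and each is obtained by feeding the Butcher-type Lemmas~\ref{RK_lem1} and~\ref{RK_lem2} into a Taylor expansion with integral remainder. I treat them in turn.

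\emph{Estimate \eqref{extended_Taylor_exp}.} Rescaling time in \eqref{ODE} shows $\exp(sW)(y_0)=y(W,s)$, so Taylor's theorem gives
\[
y(W,s)=y_0+\sum_{k=1}^{m}\frac{s^{k}}{k!}\,y^{(k)}(W,0)
+\int_{0}^{s}\frac{(s-u)^{m}}{m!}\,y^{(m+1)}(W,u)\,du .
\]
By Lemma~\ref{RK_lem1}, $y^{(k)}(W,0)=\sum_{\ltree{t}\in\ltree{T}_{k}}D(W,|\ltree{t}|)(y_0)$. Grouping $\ltree{T}_{k}$ by isomorphism type and using that a non-labelled tree $t$ with $r(t)=k$ is represented by exactly $\alpha(t)$ labelled trees while $D(W,\cdot)$ depends only on the type, this equals $\sum_{t\in T_{k}}\alpha(t)D(W,t)(y_0)$; summing over $1\le k\le m$ gives the stated main term $\sum_{t\in T_{\le m}}\tfrac{s^{r(t)}}{r(t)!}\alpha(t)D(W,t)(y_0)$. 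For the remainder, Lemma~\ref{RK_lem1} bounds $|y^{(m+1)}(W,u)|$ by $\sum_{\ltree{t}\in\ltree{T}_{m+1}}\|D(W,|\ltree{t}|)\|_{\infty}$; an elementary differential of a tree with $m+1$ vertices is a contraction of $m+1$ derivatives of $W$ of orders at most $m$, hence each summand is $\le\|W\|_{C^{m+1}}^{m+1}$, and since $\card(\ltree{T}_{m+1})$ depends only on $m$ the integral is $\le C_{m+1}s^{m+1}\|W\|_{C^{m+1}}^{m+1}$.

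\emph{Estimate \eqref{extended_RK_Taylor_exp}.} The preliminary point is that the hypothesis \eqref{RK_lem2_eq1} of Lemma~\ref{RK_lem2} actually holds for the stage values $Y_i(W,s)$ of \eqref{original_RK}; I prove this by induction on $m$. For $m=1$ it is immediate from $Y_i(W,s)-y_0=s\sum_{j}a_{ij}W(Y_j(W,s))$ and the boundedness of $W$ (the stages being well defined for $s\in(0,1]$, automatically for the explicit array $A_{ex}$). Assuming \eqref{RK_lem2_eq1} at level $m$, Lemma~\ref{RK_lem2} yields \eqref{RK_lem2_eq2} at level $m$; substituting its right-hand side (together with the leading term $sW(y_0)$) into $Y_i(W,s)=y_0+\sum_{j}a_{ij}\,sW(Y_j(W,s))$ and using that every rooted tree $\ne\tau$ has a unique bracket form $[t_1\cdots t_l]$, the recursion $\sum_{j}a_{ij}\prod_{k}\zeta_j(t_k;A)=\zeta_i([t_1\cdots t_l];A)$, and $\sum_{j}a_{ij}=\zeta_i(\tau;A)$, the coefficients collapse to $s^{r(t)}\zeta_i(t;A)/\sigma(t)$ and one gets \eqref{RK_lem2_eq1} at level $m+1$, with error controlled by $(\sum_j|a_{ij}|)$ times that in \eqref{RK_lem2_eq2}. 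Once \eqref{RK_lem2_eq1} is available for all $m$, a final application of Lemma~\ref{RK_lem2} gives \eqref{RK_lem2_eq2}, and substituting it into $Y(y_0;W,s)=y_0+\sum_{i=1}^{K}b_i\,sW(Y_i(W,s))$ and collecting, for each tree $t=[t_1\cdots t_l]\in T_{\le m}$, the factor $\sum_{i}b_i\prod_{k}\zeta_i(t_k;A)$ produces exactly \eqref{extended_RK_Taylor_exp}, the error being $(\sum_i|b_i|)C_{m+1}s^{m+1}\|W\|_{C^{m+1}}^{m+1}$.

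The analytic content is light given $W\in C_b^\infty$: all that is needed is that each $\|D(W,t)\|_{\infty}$ is dominated by a power of $\|W\|_{C^{m+1}}$ and that only finitely many trees have bounded size. The genuine difficulty — already packaged inside Lemmas~\ref{RK_lem1} and~\ref{RK_lem2}, but also what drives the induction above — is the combinatorics of passing between labelled and non-labelled trees with the correct weights $\alpha$, $\sigma$, $\zeta_i$; this rests on the unique bracket decomposition $t=[t_1\cdots t_l]$ of a non-trivial tree, the multiplicativity of the $\zeta_i$ under it, and the identification of $\alpha(t)$ with the number of monotone labellings of $t$. Making the induction close, so that the stage-value expansion \eqref{RK_lem2_eq1} self-propagates through Lemma~\ref{RK_lem2}, is the one place where some bookkeeping care is required.
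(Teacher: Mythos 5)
Your argument is correct and is essentially the paper's own route: the paper gives no written proof beyond the remark that the theorem follows by ``applying these lemmas'' (Lemmas~\ref{RK_lem1} and~\ref{RK_lem2}, credited to Butcher) to the Taylor expansion of the flow of~\eqref{ODE} and to the Runge--Kutta recursion~\eqref{original_RK}, which is precisely what you do. Your induction making the stage expansion~\eqref{RK_lem2_eq1} self-propagate through $Y_i=y_0+s\sum_j a_{ij}W(Y_j)$ via the $\zeta_i$ recursion, and the final substitution into $Y(y_0;W,s)=y_0+s\sum_i b_iW(Y_i)$, is exactly the bookkeeping the paper leaves implicit.
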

  
  We say that $(A,b)$ satisfies $m$-th-order conditions if  
  \begin{equation}\label{coeff_condis}
    \frac{\alpha(t)}{r(t)!}=
    \frac{\sum_{i=1}^Kb_i\prod_{k=1}^l\zeta_i\left(t_k;A\right)}{\sigma(t)} 
  \end{equation}
  for all $t=\left[t_1\cdots t_l\right]\in T_{\leq m}$.   
  
  From Theorem~\ref{RK_Taylor_thm}, the following result can be 
  directly derived. 
  \begin{theorem}\label{krk_main_thm3}
    Suppose that $(A,b)$ satisfies 
    the $m$th-order conditions~\eqref{coeff_condis}.  
    Let $g(W)(y_0)=Y(y_0;W,1)$ where $Y(y_0;W,1)$ is the Runge--Kutta 
    method defined in~\eqref{original_RK}.  Then 
    \begin{equation}\label{krk_main_thm3_eq}
      g\in\mathcal{IS}(m).
    \end{equation}
  \end{theorem}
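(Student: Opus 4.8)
The plan is to derive Theorem~\ref{krk_main_thm3} directly from Theorem~\ref{RK_Taylor_thm} by specialising to $s=1$ and subtracting the two expansions. Fix $W\in C^\infty_b(\mathbb{R}^N;\mathbb{R}^N)$ and $x\in\mathbb{R}^N$, and put $y_0=x$. Applying \eqref{extended_Taylor_exp} with $s=1$ writes $\exp(W)(x)$ as $x+\sum_{t\in T_{\le m}}\frac{\alpha(t)}{r(t)!}D(W,t)(x)$ up to a remainder of size at most $C_{m+1}\|W\|_{C^{m+1}}^{m+1}$; applying \eqref{extended_RK_Taylor_exp} with $s=1$ writes $g(W)(x)=Y(x;W,1)$ as $x+\sum_{t=[t_1\cdots t_l]\in T_{\le m}}\frac{1}{\sigma(t)}\sum_{i=1}^K b_i\prod_{k=1}^l\zeta_i(t_k;A)\,D(W,t)(x)$ up to a remainder of size at most $C'_{m+1}\|W\|_{C^{m+1}}^{m+1}$.

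Next I would match the two principal sums term by term. Both are indexed by the non-labelled trees $t\in T_{\le m}$, and for $t=[t_1\cdots t_l]$ the $m$-th-order conditions \eqref{coeff_condis} assert exactly that $\frac{\alpha(t)}{r(t)!}=\frac{1}{\sigma(t)}\sum_{i=1}^K b_i\prod_{k=1}^l\zeta_i(t_k;A)$, so the coefficient of each $D(W,t)(x)$ agrees on both sides. The one place needing a separate word is the single-vertex tree $t=\tau$, which corresponds to the leading term $y_0+\sum_i b_i W(Y_i(W,s))$ of \eqref{original_RK}: reading \eqref{coeff_condis} at the empty decomposition $l=0$ gives the consistency identity $\sum_{i=1}^K b_i=1=\alpha(\tau)/r(\tau)!$, so the $D(W,\tau)(x)=W(x)$ terms agree as well. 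Hence the entire polynomial part cancels in the difference $g(W)(x)-\exp(W)(x)$, and only the two remainders survive.

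I would then conclude
\[
  \sup_{x\in\mathbb{R}^N}\bigl|g(W)(x)-\exp(W)(x)\bigr|\le\bigl(C_{m+1}+C'_{m+1}\bigr)\|W\|_{C^{m+1}}^{m+1},
\]
where $C_{m+1}$ and $C'_{m+1}$ are the constants furnished by Theorem~\ref{RK_Taylor_thm} and are independent of $W$ and of $x$. Setting $C_m:=C_{m+1}+C'_{m+1}$, this is precisely the bound \eqref{RK_er} defining membership in $\mathcal{IS}(m)$, so $g\in\mathcal{IS}(m)$.

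Since Theorem~\ref{RK_Taylor_thm} (built on Lemmas~\ref{RK_lem1} and~\ref{RK_lem2}) already contains all the analysis, there is no genuine analytic obstacle here; the statement is essentially a corollary. The only point demanding care is the combinatorial bookkeeping that makes the two tree-indexed expansions align — checking that every $t\in T_{\le m}$ enters both sums with the weights $\alpha(t)$, $r(t)$, $\sigma(t)$, $\zeta_i(t_k;A)$ as defined before Lemma~\ref{RK_lem1}, and that the root (the $l=0$, $t=\tau$ case) is handled so that \eqref{coeff_condis} forces term-by-term cancellation rather than merely cancellation of the two sums in aggregate.
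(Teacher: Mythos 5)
Your proposal is correct and is essentially the paper's own route: the paper proves nothing beyond stating that Theorem~\ref{krk_main_thm3} is ``directly derived'' from Theorem~\ref{RK_Taylor_thm}, and your argument --- specialising \eqref{extended_Taylor_exp} and \eqref{extended_RK_Taylor_exp} to $s=1$, cancelling the tree-indexed sums term by term via \eqref{coeff_condis} (including the $t=\tau$ consistency condition $\sum_{i=1}^K b_i=1$), and taking $C_m=C_{m+1}+C'_{m+1}$ in \eqref{RK_er} --- is exactly that derivation, spelled out. No gap.
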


  \section{The new simulation scheme and
    Corollary~\ref{contp_krk_main_cor}}\label{sec:4}
  \noindent Corollary~\ref{contp_krk_main_cor} indicates 
  the new implementation method of the new higher-order scheme 
  proposed by Kusuoka in~\cite{kusuoka:2001aprx} and~ \cite{kusuoka:2003}.  
  \par
  This implementation method seems to
  be distinct mainly because it has two advantages.  One is that the
  approximation operator can be obtained by numerical calculations if
  the Runge--Kutta method is applied to the calculation
  of each $\exp{(Z_j)}$ whereas the tediousness in
  symbolical calculations of the operator might be an obstacle for
  practical application, which can be observed
  in~\cite{KusuokaNinomiya:2004}, \cite{ninomiya:2003},
  and~\cite{shimizu:2002}.
  The other advantage is that the partial sampling
  problem discussed in~\cite{KusuokaNinomiya:2004}
  and~\cite{ninomiya:2003} can be resolved by using quasi-Monte Carlo
  methods.  More precisely, the following two points
  make an effective use of 
  the Low-Discrepancy sequences, which are essential to
  quasi-Monte Carlo methods(\cite{niederreiter:1992book}):
  \begin{itemize}
  \item In this implementation, $S_{j}^{i}$ can be taken to
    be a continuous random variable.
  \item The scheme itself is characterized by the need for a much less
    number discretization time steps, which leads to a reduction in the
    number of dimensions of the numerical integration.
  \end{itemize}

  \section{Application}\label{sec:5}
  \noindent In this section
  we present a numerical example
  in order to illustrate the implementation method 
  proposed in Corollary~\ref{contp_krk_main_cor} and
  compare it with some existing schemes. 
  \subsection{Simulation}
  \noindent Let $X(t,x)$ be a diffusion process defined by~\eqref{SDE}.
  \noindent The most popular scheme of first order is the Euler--Maruyama
  scheme, which is shown in~\cite{KloedenPlaten:1999}
  and~\cite{TalayTubaro:1990},
  for an arbitrary $C^4$ function $f$
  \begin{equation}\label{eq:EM}
    \left\Vert E\left[ f\left(\EMn{X}{n}_1\right) \right]
    -E\left[f\left(
      X(1,x)\right) \right] \right\Vert \leq C_{f}\frac{1}{n}
  \end{equation}
  where $\EMn{X}{n}_1$ denotes the Euler--Maruyama scheme approximating
  $X(t,x)$.  We note that this inequality holds for measurable $f$
  if $\left\{V_i\right\}_{i=1,\dots,d}$ satisfies some more conditions
  (\cite{BallyTalay:1996}). 
  \par
  The construction of a higher-order scheme is based on the higher order
  stochastic Taylor
  formula~(\cite{Castell:1993}\cite{KloedenPlaten:1999}).  When the
  vector fields $\left\{V_i\right\}_{i=0}^d$ commute, higher-order
  schemes can be simplified to a direct product 
  of one-dimensional problem as seen in~\cite{KloedenPlaten:1999}.  
  In contrast, for non-commutative $\left\{V_i\right\}_{i=0}^d$,
  the acquisition of all iterated integrals of
  Brownian motion is required, which is very demanding.  This is done
  in~\cite{kusuoka:2001aprx},\cite{LiuLi:2000},\cite{Talay:1990},
  \cite{Talay:1995} and~\cite{KusuokaNinomiya:2004} and generalized as
  the cubature method on Wiener space~(\cite{LyonsVictoir:2002}).
  
  Once a $p$th-order scheme $\{\ODp{X}{n}_{k/n}\}_{k=0,\dots,n}$
  is obtained and  expanded with some constant $K_f$ as
  \begin{equation}\label{eq:RE1}
    E\left[f\left(\ODp{X}{n}_{1}\right)\right]
    -E\left[f\left(X(1,x)\right)\right]
    = K_f\frac{1}{n^p} + O\left(\frac{1}{n^{p+1}}\right),
  \end{equation}
  the $(p+1)$th-order scheme can be derived as
  \begin{equation}\label{eq:RE2}
    \frac{2^{p}}{2^{p}-1}E\left[f\left(\ODp{X}{2n}_{1}\right)\right]-\frac{1
    }{2^{p}-1}E\left[f\left(\ODp{X}{n}_{1}\right)\right].
  \end{equation}
  This boosting method is called Romberg extrapolation and is shown to
  be applicable to the Euler--Maruyama scheme under certain
  conditions~(\cite{TalayTubaro:1990}).
  \par
  The simulation approach must be followed by the numerical
  calculation of
  $\displaystyle{
    E\left[
      f\left(
      \ODp{X}{n}_1
      \right)
      \right]
  }$.
  However, when $n\times d$ is large,
  it is practically impossible to proceed with the integration
  by using the trapezoidal formula
  and so we fall back on the Monte Carlo or quasi-Monte Carlo
  method~(\cite{niederreiter:1992book}).
  Here we make only a few remarks on
  each method.
  For a more detailed analysis, see~\cite{NinomiyaVictoir:2005}.
  \begin{myRem}\label{rem:MC}
    As long as we use the Monte Carlo method for numerical approximation
    of $E[f(X(1,x))]$, the number of sample points needed to attain a
    given accuracy is independent of the number of the dimensions of
    integration, namely both the number $n$ of partitions and the order $p$
    of the approximation scheme.
  \end{myRem}
  
  \begin{myRem}\label{rem:QMC}
    In contrast to the Monte Carlo case, the number of sample points
    needed for the quasi-Monte Carlo method for numerical approximation
    of $E[f(X(1,x))]$ heavily depends on the number of the dimensions of
    integration.
    The fewer the dimensions, the fewer the samples that are needed.
  \end{myRem}
  
  \subsection{The algorithm and competitors}\label{ss:TheAlgorithm}
  \subsubsection{The algorithm of the new method}
  \noindent
  We take the algorithm which is proposed in
  Theorem~\ref{contp_krk_main_thm2} and
  Corollary~\ref{contp_krk_main_cor} with $u=3/4$.
  From 
  Corollary~\ref{contp_krk_main_cor}, we can implement the second-order
  algorithm with a numerical approximation of $\exp\left(Z_i\right)$ of
  at least fifth-order Runge--Kutta method 
  because the order $m$ for an integration scheme attained 
  by $Z_1$ and $Z_2$ is five and so the order of the new 
  implementation method becomes two.
  As a result of the same argument it can be shown that 
  at least seventh-order explicit Runge--Kutta method has to be 
  applied to the approximation of $\exp\left(Z_i\right)$ 
  when we boost the new method to the third order
  by Romberg extrapolation.
  Details of these Runge--Kutta algorithms used here are
  given in the Appendix.
  
  \subsubsection{Competitive schemes}
  \noindent 
  There there are numerous studies on the acceleration of Monte Carlo
  methods~(\cite{Glasserman:2004}). We choose for the following reasons
  only the crude Euler--Maruyama scheme and the algorithm
  introduced in~\cite{NinomiyaVictoir:2005}, which we will refer
  to in the remainder of this paper as N-V method,
  both with
  and without Romberg extrapolation, as competitors:
  \begin{enumerate}[(i)]
  \item Only these two schemes can be recognized as being comparable to the
    new method, since they are model-independent.
  \item Almost all variance reduction techniques
    and dimension reduction techniques applicable to the
    Euler--Maruyama scheme are also applicable to the new method.
  \end{enumerate}
  \noindent
  
  \subsection{Numerical results}
  \noindent We provide an example on financial option
  pricing in the following part of this paper.
  \subsubsection{Asian option under the Heston model}  
  \noindent We consider an Asian call option written on an asset whose
  price process follows the Heston stochastic volatility model.
  Comparison with the N-V
  method will also be given as well from the result shown
  in~\cite{NinomiyaVictoir:2005}.  \par The non-commutativity of this
  example should be noted here.
  
  \noindent Let $Y_1$ be the price process of an asset following the
  Heston model:
  \begin{equation}\label{eq:heston}
    \begin{split}
      Y_1(t, x) =& x_1+\int_0^t\mu Y_1(s, x)\,ds
      +\int_0^t Y_1(s, x)\sqrt{Y_2(s,x)}\,dB^1(s), \\
      Y_2(t, x) =& x_2+\int_0^t\alpha
      \left(\theta - Y_2(s, x)\right)\,ds\\
      &\quad+\int_0^t\beta
      \sqrt{Y_2(s, x)}\left(\rho\,dB^1(s)+\sqrt{1-\rho^2}\,dB^2(s)\right),
    \end{split}
  \end{equation}
  where $x = (x_1, x_2) \in(\mathbb{R}_{>0})^2$,
  $(B^1(t), B^2(t))$ is a two-dimensional standard Brownian motion,
  $-1\leq\rho\leq 1$,
  and $\alpha$, $\theta$, $\mu$ are some positive coefficients such that
  $2\alpha\theta-\beta^2>0$ to ensure the existence and uniqueness of a
  solution to the stochastic differential equation (\cite{feller:1950}).
  Then the payoff of Asian call option on this asset with maturity
  $T$ and strike $K$ is $\max\left(Y_3(T,x)/T-K, 0\right)$
  where
  \begin{equation}\label{eq:heston2}
    Y_3(t,x) = \int_0^tY_1(s,x)\,ds.
  \end{equation}
  Hence, the price of this option becomes $D\times
  E\left[\max\left(Y_3(T,x)/T - K,\, 0\right)\right]$ where $D$ is an
  appropriate discount factor that we do not focus on here.
  We set $T=1$, $K=1.05$, $\mu = 0.05$, $\alpha = 2.0$, $\beta = 0.1$,
  $\theta = 0.09$, $\rho=0$, and $(x_1, x_2) = (1.0, 0.09)$ and take
  $$E\left[\max\left(Y_3(T,x)/T - K,\, 0\right)\right] = 6.0473534496
  \times 10^{-2}$$ that is obtained by the new method with Romberg
  extrapolation and the quasi-Monte Carlo with $n=96+48$, and $M=8\times
  10^8$ where $M$ denotes the number of sample points.  \par Let $Y(t,x) =
  \big{.}^t\!\left(Y_1(t,x), Y_2(t,x), Y_3(t,x)\right)$.  Transformation
  of the stochastic differential equations~\eqref{eq:heston}
  and~\eqref{eq:heston2} gives the
  following Stratonovich-form stochastic differential equations:
  \begin{equation}\label{eq:heston3}
    Y(t,x) = \sum_{i=0}^2\int_0^tV_i(Y(s,x))\circ dB^i(s),
  \end{equation}
  where
  \begin{equation}\begin{split}
      V_0\left({}^t\!\left(y_1, y_2, y_3\right)\right) &=
      \bigg{.}^t\!
      \left(y_1\left(\mu-\frac{y_2}{2}-\frac{\rho\beta}{4}\right),\,
      \alpha(\theta - y_2)-\frac{\beta^2}{4},\,
      y_1\right) \\
      V_1\left({}^t\!\left(y_1, y_2, y_3\right)\right) &=
      \Big{.}^t\!
      \left(y_1\sqrt{y_2},\, \rho\beta\sqrt{y_2},\, 0\right) \\
      V_2\left({}^t\!\left(y_1, y_2, y_3\right)\right) &=
      \Big{.}^t\!
      \left(0,\, \beta\sqrt{\left(1-\rho^2\right)y_2},\, 0\right).
  \end{split}\end{equation}
  
  \subsubsection{Dimensions of integrations}
  \noindent As mentioned in Remarks~\ref{rem:MC} and~\ref{rem:QMC}, the
  dimensions of integrations in these methods affect
  the quasi-Monte Carlo method.  The relation among $d$: the number of
  factors, $n$: the number of partitions, and the dimensions of
  integration of each method can be summarized as in Table~\ref{table:dim}.
  \begin{table}
    \caption{\# of dimensions involved in each method.}
    \label{table:dim}       
    \begin{tabular}{lc}
      \hline\noalign{\smallskip}
      Method & Number of dimensions \\
      \noalign{\smallskip}\hline\noalign{\smallskip}
      \noalign{\smallskip}\hline\noalign{\smallskip}
      Euler--Maruyama & $dn$ \\
      \noalign{\smallskip}\hline\noalign{\smallskip}
      N-V & $n+dn$ ($n$-Bernoulli and $(d\times n)$-Gaussian)\\
      \noalign{\smallskip}\hline\noalign{\smallskip}
      New Method & $2dn$ \\
      \noalign{\smallskip}\hline
    \end{tabular}
  \end{table}
  
  \subsubsection{Discretization Error}
  \noindent The relation between discretization error
  and the number of partitions of each
  algorithm is plotted in Figure~\ref{fig:disc-err}.  We can observe
  from this figure that for $10^{-4}$ accuracy the new method with
  Romberg extrapolation takes the minimum number of partitions as $n=1+2$
  whereas $n=16$ for the Euler--Maruyama scheme with the extrapolation.  Even
  without the extrapolation, the new method attains that accuracy with
  $n=10$ while the Euler--Maruyama scheme takes $n=2000$.  Moreover,
  it may be said that the N-V method shows slightly worse performance
  than the new method.
  
  \begin{figure}
    \includegraphics{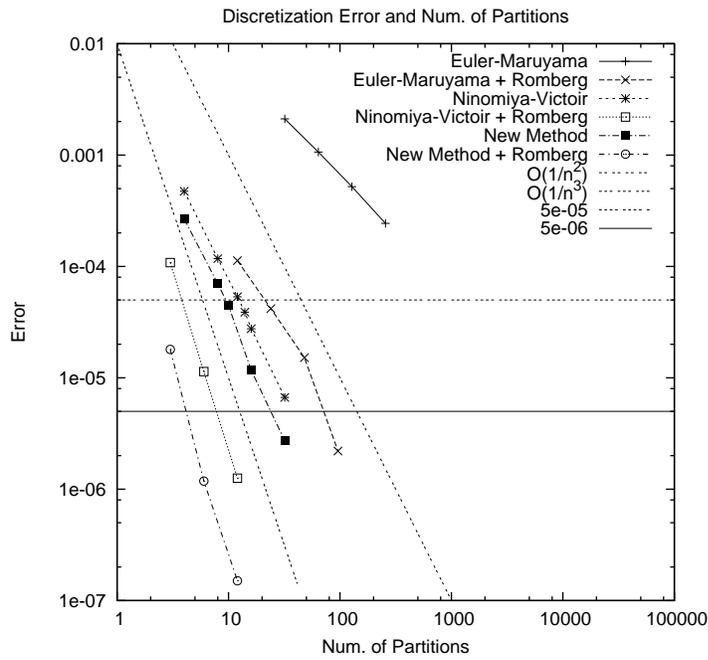}
    \caption{Error coming from the discretization}
    \label{fig:disc-err}       
  \end{figure}
  
  \subsubsection{Integration Error}
  
  \begin{figure}
    \includegraphics{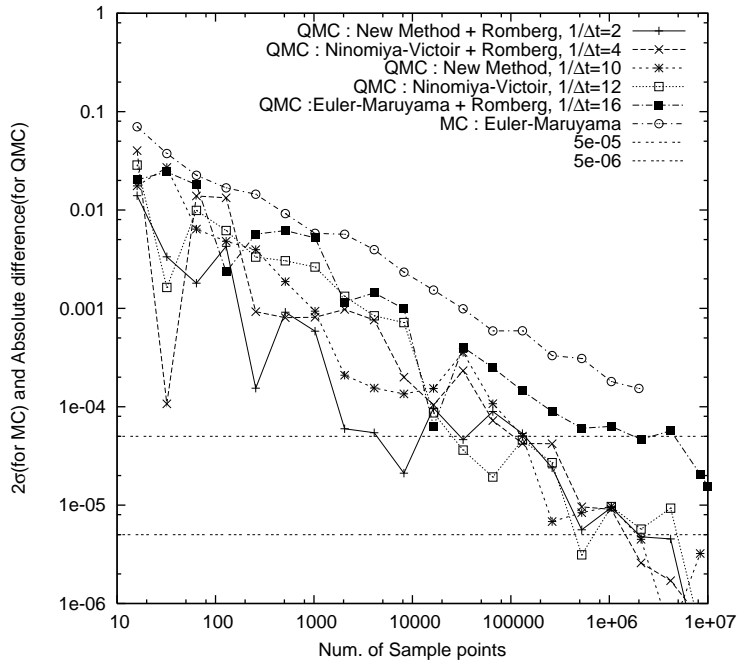}
    \caption{Convergence Error from quasi-Monte Carlo and Monte Carlo}
    \label{fig:QMC_conv}
  \end{figure}
  
  \noindent Looking at Figure~\ref{fig:QMC_conv}, we can compare
  convergence errors of respective methods for each number of sample
  points, $M$.  For the Monte Carlo case, $2\sigma$ of $10$ batches is taken
  as convergence error while for the quasi-Monte Carlo method, absolute
  difference from the value to be convergent is considered.  For
  $10^{-4}$ accuracy with $95\%$ confidence level ($2\sigma$), $M=10^8$
  is taken for the Monte Carlo method.  On the other hand, if we apply
  instead the quasi-Monte Carlo method, the new method and the N-V
  method require $M=2\times 10^5$ sample points, though $M=5\times 10^6$
  has to be taken for the Euler--Maruyama scheme.
  
  \subsubsection{Overall performance comparison}
  \begin{table}
    \caption{\#Partitions, \#Samples, Dimension, and CPU time required 
      for an accuracy of $10^{-4}$.}\label{table:table}
    \begin{tabular}{lcccc}
      \hline\noalign{\smallskip}
      Method & \#Part. & Dim. &\#Samples & CPU time (sec) \\
      \hline\noalign{\smallskip}
      \hline\noalign{\smallskip}
      E-M + MC & $2000$ & $4000$ & $10^8$ & $1.72\times 10^5$ \\
      \hline\noalign{\smallskip}
      E-M + Romb. + QMC & $16 + 8$ & $48$ &$5\times 10^6$
      & $1.27\times 10^2$ \\
      \hline\noalign{\smallskip}
      N-V + QMC & $16$ & $32+16$ &$2\times 10^5$ & $4.38$ \\
      \hline\noalign{\smallskip}
      N-V + Romb. + QMC & $4+2$ & $12+6$ & $2\times 10^5$ & $1.76$ \\
      \hline\noalign{\smallskip}
      New Method + QMC & $10$ & $40$ & $2\times 10^5$ & $3.4$ \\
      \hline\noalign{\smallskip}
      New Method + Romb. + QMC & $2+1$ & $12$ & $2\times 10^5$ & $1.2$ \\
      \hline\noalign{\smallskip}
    \end{tabular}
  \end{table}
  
  \noindent
  The number of partitions, the number of samples, and the amount of
  computation time required for $10^{-4}$ accuracy for each method are
  summarized in Table~\ref{table:table}.  CPU used in this experiment is
  Athlon 64 3800+ by AMD.
  
  \par Since the amount of time required to carry out the calculation
  for each sample point is proportional to the number of partitions,
  the total time spent on
  calculations is proportional both to the number of
  partitions and to the number of samples.
  We can see from the Table~\ref{table:table}
  that the speed of the new method is
  approximately $100$ times faster than that of the Euler--Maruyama
  scheme when Romberg extrapolation and quasi-Monte Carlo are applied to
  each.  Even when the extrapolation is not applied,
  the new method enables calculations some $37$ times faster
  than the Euler--Maruyama
  scheme with Romberg extrapolation and quasi-Monte Carlo method.
  This fact shows that the reduction in the number of partitions 
  sufficiently compensate for
  the slowness of one step of the new method at least in 
  the present study.
  
  \par Lastly, Remarks~\ref{rem:MC} and~\ref{rem:QMC} should be
  emphasized to reiterate that the advantage of the new method
  is that it is deeply
  related to the properties of the quasi-Monte Carlo method.
  
  \section*{Appendix: The fifth-order and the seventh-order
    Runge--Kutta algorithms}
  \noindent
  We present here the concrete algorithms of the explicit fifth-
  and seventh-order Runge--Kutta methods
  applied in Subsection~\ref{ss:TheAlgorithm}.
  The fifth-order method is taken from~\cite{Butcher:1987} as follows:
  \begin{gather*}
    a_{21} = \frac{2}{5},\quad a_{31} = \frac{11}{64},
    \quad a_{32}=\frac{5}{64},\quad a_{43}=\frac{1}{2},
    \quad a_{51} = \frac{3}{64},\quad a_{52} = -\frac{15}{64},\\
    a_{53} = \frac{3}{8}, \quad a_{54}= \frac{9}{16},
    \quad a_{62} = \frac{5}{7},\quad a_{63} = \frac{6}{7},
    \quad a_{64} = -\frac{12}{7},\quad a_{65} = \frac{8}{7}, \\
    a_{ij} = 0\quad\text{otherwise},\\
    b = \begin{pmatrix}
      \displaystyle{\frac{7}{90}} & 0
      & \displaystyle{\frac{32}{90}} & \displaystyle{\frac{12}{90}}
      & \displaystyle{\frac{32}{90}}
      & \displaystyle{\frac{7}{90}}
    \end{pmatrix}.
  \end{gather*}
  The seventh-order method is taken from~\cite{Butcher:2003} as follows:
  \begin{gather*}
    a_{21} = \frac{1}{6},\quad a_{32} = \frac{1}{3},
    \quad a_{41} = \frac{1}{8},\quad a_{43}=\frac{3}{8},
    \quad a_{51}=\frac{148}{1331},\quad a_{53}=\frac{150}{1331},
    \quad a_{54}= -\frac{56}{1331},\\
    a_{61}= -\frac{404}{243},
    \quad a_{63}= -\frac{170}{27},\quad a_{64}= \frac{4024}{1701},
    \quad a_{65}= \frac{10648}{1701},\quad a_{71}= \frac{2466}{2401},
    \quad a_{73}=\frac{1242}{343}, \\
    a_{74}= -\frac{19176}{16807},
    \quad a_{75}= -\frac{51909}{16807},\quad a_{76}=\frac{1053}{2401},
    \quad a_{81}=\frac{5}{154},\quad a_{84}=\frac{96}{539},
    \quad a_{85}= -\frac{1815}{20384},\\
    a_{86}= -\frac{405}{2464},\quad a_{87}=\frac{49}{1144},
    \quad a_{91}= -\frac{113}{32},
    \quad a_{93}= -\frac{195}{22},\quad a_{94}=\frac{32}{7},
    \quad a_{95}= \frac{29403}{3584},\\
    a_{96}= -\frac{729}{512},\quad a_{97}= \frac{1029}{1408},
    \quad a_{98} = \frac{21}{16},
    \quad a_{ij}=0 \quad\text{otherwise},\\
    b=\begin{pmatrix}
    0 & 0 & 0 & \displaystyle\frac{32}{105} &
    \displaystyle\frac{1771561}{6289920} &
    \displaystyle\frac{243}{1560} & 
    \displaystyle\frac{16807}{74880} &
    \displaystyle\frac{77}{1440}& \displaystyle\frac{11}{70}
    \end{pmatrix}.
  \end{gather*}

\bibliographystyle{spmpsci} \bibliography{mariko}   


\end{document}